\documentclass[reqno]{amsart}
\usepackage{amssymb}
\usepackage[dvips]{epsfig}
\usepackage{graphicx}
\usepackage{color}
\usepackage{amsmath}
\usepackage{amssymb}
\usepackage{amsfonts}
\usepackage{amsthm}
\usepackage{mathrsfs}

\newtheorem{thm}{Theorem}[section]
\newtheorem{lem}[thm]{Lemma}

\newtheorem{cor}[thm]{Corollary}
\theoremstyle{remark}
\newtheorem{rem}{\bf Remark}[section]
\theoremstyle{definition}
\newtheorem{defn}[thm]{Definition}

\numberwithin{equation}{section}
\usepackage[colorlinks=true,pdfstartview=FitV,linkcolor=magenta,citecolor=cyan]{hyperref}

\usepackage{bm}
\begin{document}

	\title[ ]{how the angle of static field affect the spectrum of lattice schr\"odinger operator on $\mathbb{Z}^2$ }
	
	\author[Meina Gao]{Meina Gao}
	\address{School of Mathematics, Phyiscs and Statistics, Shanghai Polytechnic University, shanghai, P.R.China}
	\email{mngao@sspu.edu.cn}

\author[Siming Li]{Siming Li}
	\address{School of Mathematical Sciences, Dalian University of Technology, Dalian, P.R.China}
	\email{lisiming@mail.dlut.edu.cn}
	
	\author[Yingte Sun]{Yingte Sun}
	\address{School of Mathematics, Yangzhou University, yangzhou, P.R.China} \email{sytsts@aliyun.com}

	\date{}

	\keywords{ Localization, KAM diagonlization, Lattice Schr\"odinger operator   }

\thanks{{\em 2020 Mathematics Subject Classification.} Primary:47B93 . Secondary:47A55.}

\begin{abstract}The spectral properties of the discrete Schrödinger operator on a two‑dimensional lattice under uniform static fields are critically governed by the field direction: for angles parametrized by irrational frequencies, the operator is exponentially localized, and for a subset of such angles with asymptotically full Lebesgue measure, this localization persists under a given logarithmically decaying perturbation potential.

\end{abstract}
\maketitle

\section{Introduction}

In this paper, we investigate the lattice Schr\"odinger operator on the two-dimensional  lattice $\mathbb{Z}^2$ subject to uniform static  fields. Let $$H_{\omega}=\Delta+D : \quad\ell^{2}(\mathbb{Z}^2)\rightarrow \ell^{2}(\mathbb{Z}^2)$$
be defined by
\begin{equation}\label{25.8.18.1}
	(H_{\omega}u)(n)=(\Delta u)(n)+(Du)(n),\quad u=\{u(n)\}_{n\in\mathbb{Z}^2}\in\ell^2(\mathbb{Z}^2),
\end{equation}
where $\omega=(\omega_1,\omega_2)\in\mathbb{R}^2$, and
$n=(n_1,n_2)\in\mathbb{Z}^2$. The discrete Laplacian $\Delta$ and the multiplication operator $D$ are respectively defined  by
\begin{equation}
	(\Delta u)(n)=\sum_{m\in \mathbb{Z}^2, |n-m|=1}u(m)
\end{equation}
and
\begin{equation}\label{25.8.31.3}
	(Du)(n)=\left( \omega\cdot n\right)  u(n),
\end{equation}
where
$|n|:=|n_1|+|n_2|$ 
and
$\omega\cdot n:=\omega_1 n_1+\omega_2 n_2.
$ Here, the multiplication operator $D$ represents a static field oriented along the direction $(\omega_1,\omega_2)$, with  field strength $\sqrt{\omega^2_1+\omega^2_2}$.

This paper aims to investigate the spectral properties of Schr\"odinger operators on two-dimensional lattices in the presence of static fields, with particular emphasis on the role played by the spatial orientation of the field. Our results demonstrate  that the direction of static fields has a crucial influence on the spectral properties of the operator.

In the presence of  disordered potentials, the Schr\"odinger operator on a lattice typically exhibits purely point spectrum, with corresponding eigenstates decay exponentially in space. This phenomenon is known as Anderson localization. Physicists generally conjecture that  disordered potentials of any strength can induce localization for one- and two-dimensional Schr\"odinger operators. In contrast, for dimensions  three and higher, localization occurs only in the presence of  sufficiently strong disorder; under weak disorder, the operator is expected to retain  continuous spectrum, implying a spectral phase transition as the strength of disorder varies.   Over the past four decades, many mathematicians have made fundamental contributions to this field. These include, among others, the works of Fröhlich–Spencer \cite{FS1983}, Klein \cite{DA1989,GK2001}, Aizenman \cite{AM1993}, and Bourgain \cite{BK2005}. Nevertheless, two important problems remain unresolved to this day: first, whether localization can be rigorously established for two-dimensional lattices under arbitrarily weak disorder; and second, whether absolutely continuous spectrum exists in high-dimensional systems with weak disorder.

Unlike disorder-induced localization, Wannier–Stark localization originates from the Wannier–Stark ladder generated by  external static fields. In such a setting, particles living on the one-diemnsional lattice become super-exponentially localized under uniform static fields. To date, both physical experiments and mathematical theories have provided a comprehensive understanding of this phenomenon. From the mathematical perspective, recent works by Sun–Wang \cite{SW2024,SW2025} and Aloisio \cite{M2025,ADM2026} have demonstrated that this localization remains stable under long-range hopping or arbitrary bounded perturbations. De Roeck et al. \cite{DHLV2026} further extended the analysis to interacting few-body systems, establishing that for finitely many interacting particles on a lattice subject uniform static fields, the spectrum is purely point and the eigenstates decay exponentially in certain spectral regions.

However, for higher-dimensional lattice systems,  the question of whether a static field can still generate a Wannier–Stark ladder and thereby induce localization has not yet been fully resolved in the physics community. Some physical studies \cite{TTM1993,TTM1999,KB2013,KK2002} suggest that the direction of  static fields may significantly influence the spectral structure of the operator. Nevertheless, most of these investigations are limited to finite-volume models or rely on numerical simulations, thus lacking rigorous mathematical justification.  Motivated by this gap, the present paper systematically examines relevant physical models on the two-dimensional lattice from a rigorous mathematical perspective, focusing  on the following two interrelated aspects:

$\bullet$ \textbf{Spectral classification depending on field direction}

By exploiting the T$\ddot{\mbox{o}}$plitz structure of the operator introduced in recent works \cite{SW2024,HS2025}, we provide a complete analysis of how the field direction affects the spectral properties of the Schrödinger operator. Our analysis also indicates that the nature of the hopping terms, whether  nearest-neighbor, short-range, or long-range, may play an important role in determining distinct spectral classifications.

$\bullet$ \textbf{Persistence of localization under perturbations }

As noted above, in the one-dimensional case, arbitrary bounded perturbations do not destroy Wannier–Stark localization. We thus investigate whether this localization persists under perturbations in the two-dimensional setting. To this end, we consider the following Schrödinger operator under static fields:
\begin{equation}\label{jiaodu}
	(\mathcal{H}_{\theta,\varepsilon}u)(n)=(\Delta u)(n)+(\omega\cdot n)u(n)+\varepsilon v(n) u(n),
\end{equation}
where $\omega=(\omega_1,\omega_2)=(\cos \theta,\sin \theta)$, so that the field  strength  satifies $\sqrt{\omega_1^2+\omega_2^2}=1$.

Our result asserts that,  for most  field directions $\theta$,  exponential localization persists even in the presence of a  perturbation potental $v$ that decays logarithmically in the lattice disantce. The proof of this persistence  relies on KAM-type diagonalization techniques for certain quantum oscillators \cite{BG2001,LY2010}, in which the direction angle $\theta$ is introduced as a KAM iteration parameter.

The  paper is organized as follows. In Section 2, we present the two main results of this paper: the spectral classification depending on the field direction, and the persistence of localization under perturbations with logarithmic decay. In Section 3, we provide a preliminary introduction to operator norms and the relevant  function spaces. In Section 4, we give a rigorous proof of how the direction of  static fields affects the spectral properties of the operator. In Section 5, we further refine the setting of operator norms and, by employing KAM diagonalization, prove that for a fixed logarithmically  decaying perturbation potential, the localization property  is preserved for the static field in most directions.

%

\section{Main results}

In this section, we present the two main results of this paper.

\subsection{Spectral classification depending on field direction}

\begin{thm}
	Let \(H_\omega\) be the lattice Schrödinger operator defined in \eqref{25.8.18.1} with \(\omega\in\mathbb{R}^2\setminus\{0\}\). The spectral properties of \(H_\omega\) are classified according to the direction of \(\omega\) as follows:
	
	\begin{itemize}
		\item[(I)] \textbf{Axis-parallel direction:} If \(\omega_1=0\) or \(\omega_2=0\), then the spectrum \(\sigma(H_\omega)\) is purely absolutely continuous.
		
		\item[(II)] \textbf{Non-axis-parallel rational direction:} If there exists a  vector \(\ell\in\mathbb{Z}^2\setminus\{0\}\) such that \(\omega\cdot\ell=0\), but \(\omega_1\neq0\) and \(\omega_2\neq0\), then \(\sigma(H_\omega)\) is purely essential.
		
		\item[(III)] \textbf{Irrational direction:} If \(\omega\cdot\ell\neq0\) for every  \(\ell\in\mathbb{Z}^2\setminus\{0\}\), then \(\sigma(H_\omega)\) is purely point. Moreover, for every \(a>0\), there exists a complete orthonormal basis \(\{\psi_n\}_{n\in\mathbb{Z}^2}\) of eigenvectors of \(H_\omega\) such that, for each \(n\in\mathbb{Z}^2\) and all \(m\in\mathbb{Z}^2\),
		\[
		|\psi_n(m)| \le C_{\omega,a} \, e^{-a|m-n|},
		\]
		where \(|m-n|=|m_1-n_1|+|m_2-n_2|\) and \(C_{\omega,a}\) is a positive constant depending only on \(\omega\) and \(a\).
	\end{itemize}
\end{thm}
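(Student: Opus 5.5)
The plan is to exploit the translation/Fourier structure of $H_\omega$ in each of the three regimes.

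\textbf{Case (I).} Suppose, say, $\omega_2=0$ and $\omega_1\neq 0$. Then $H_\omega=(\Delta_1+\omega_1 n_1)\otimes I+I\otimes\Delta_2$ on $\ell^2(\mathbb{Z})\otimes\ell^2(\mathbb{Z})$. The operator $\Delta_2$ is unitarily equivalent, via the Fourier transform, to multiplication by $2\cos k$ on $L^2(\mathbb{T})$, so its spectrum is purely absolutely continuous. The one-dimensional Stark operator $\Delta_1+\omega_1 n_1$ has pure point spectrum $\{\omega_1 j+E_0\}_{j\in\mathbb{Z}}$, with a complete orthonormal eigenbasis $\phi_j$ (Bessel functions). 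Hence $H_\omega$ is a direct sum over $j$ of copies of $\omega_1 j+E_0+2\cos k$. Each summand has absolutely continuous spectral measures, so every spectral measure of $H_\omega$ is a countable sum of a.c.\ measures and is therefore a.c.

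\textbf{Case (II).} Take $\ell=(p,q)$ primitive with $p,q\neq0$ and $\omega\cdot\ell=0$. The translation $T_\ell u(n)=u(n-\ell)$ commutes with both $\Delta$ and $D$, since $\omega\cdot(n+\ell)=\omega\cdot n$. Now suppose $E$ were an eigenvalue of finite multiplicity, with finite-dimensional eigenspace $V$. Then $T_\ell V=V$, and since $T_\ell|_V$ is unitary on a finite-dimensional space, it has an eigenvector $u\in V$ with $T_\ell u=\lambda u$, $|\lambda|=1$. This gives $|u(n-\ell)|=|u(n)|$, so $u\notin\ell^2$ unless $u=0$. Thus there are no eigenvalues of finite multiplicity. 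Likewise the discrete spectrum, which consists of isolated eigenvalues of finite multiplicity, is empty, so $\sigma=\sigma_{ess}$.

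An optional refinement describes the structure more concretely. Decompose $\mathbb{Z}^2$ into lines $\{n:\omega\cdot n=c\}$ and Floquet-reduce along $\ell$. The fiber operators are then one-dimensional Stark operators, which shows the spectrum is in fact point of infinite multiplicity or continuous. This refinement is not needed for the statement as given.

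\textbf{Case (III).} This is the main part. Conjugate by the partial Fourier/Töplitz structure: the map $u\mapsto \hat u(x)=\sum_n u(n)e^{in\cdot x}$ sends $\Delta$ to multiplication by $2\cos x_1+2\cos x_2$ and $D$ to $-i\,\omega\cdot\nabla_x$ on $L^2(\mathbb{T}^2)$. We then seek a gauge transformation $\hat u=e^{iF(x)}\hat v$ that removes the potential, which requires
\[
\omega\cdot\nabla F(x)=2\cos x_1+2\cos x_2-c.
\]
Take $c=0$ and solve mode by mode. The equation for the $e^{\pm ix_1}$ modes is $\pm i\omega_1 F_{\pm e_1}=1$, and similarly for $e_2$. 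This solution needs only $\omega_1,\omega_2\neq0$, and there is no small-divisor problem because only four Fourier modes appear. So
\[
F(x)=\tfrac{2}{\omega_1}\sin x_1+\tfrac{2}{\omega_2}\sin x_2,
\]
and $e^{-iF}H e^{iF}=-i\omega\cdot\nabla$. The eigenfunctions of $-i\omega\cdot\nabla$ are $e^{im\cdot x}$, with eigenvalue $\omega\cdot m$; these are simple, since irrationality gives $\omega\cdot m\neq\omega\cdot m'$ for $m\neq m'$. The eigenvectors of $H_\omega$ are therefore
\[
\psi_m(n)=\widehat{\,e^{iF}e^{im\cdot x}}(n)=J_{n_1-m_1}(2/\omega_1)\,J_{n_2-m_2}(2/\omega_2),
\]
a product of Bessel functions, up to sign conventions. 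They form a complete orthonormal basis because multiplication by $e^{iF}$ is unitary. The estimate $|J_k(z)|\le (|z|/2)^{|k|}/|k|!$ gives superexponential decay, hence the bound $C_{\omega,a}e^{-a|m-n|}$ for every $a$.

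The main obstacle is justifying the unbounded-operator conjugation, namely domains and self-adjointness. To handle it, I would verify directly on the lattice that $H_\omega\psi_m=(\omega\cdot m)\psi_m$ using the Bessel recurrence $J_{k-1}(z)+J_{k+1}(z)=\tfrac{2k}{z}J_k(z)$. I would then deduce completeness from the unitarity of $e^{iF}$ on $L^2(\mathbb{T}^2)$, which avoids the domain question entirely.
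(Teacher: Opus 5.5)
Your proof is correct. Case (III) is the paper's argument seen on the Fourier side; Cases (I) and (II) take genuinely different routes.

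\textbf{Case (III).} The paper's T\"oplitz generator $W$ has $W_{i,i-e_1}=-W_{i,i+e_1}=1/\omega_1$, and likewise in the $e_2$ direction. It acts as multiplication by $iF$ for your $F=\frac{2}{\omega_1}\sin x_1+\frac{2}{\omega_2}\sin x_2$. Your equation $\omega\cdot\nabla F=2\cos x_1+2\cos x_2$ is precisely its homological equation $[W,D]+\Delta=0$, and $e^{W}H_\omega e^{-W}=D$ is your gauge transformation. You additionally get the explicit Bessel form of the eigenvectors; the paper gets the same decay estimate abstractly from $\|e^{-W}\|_a\le e^{\|W\|_a}$.

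Check the direction of conjugation. With this $F$ one has $e^{-iF}He^{iF}=-i\omega\cdot\nabla+2(2\cos x_1+2\cos x_2)$, whereas $e^{iF}He^{-iF}=-i\omega\cdot\nabla$. So the eigenvectors are the Fourier coefficients of $e^{-iF}e^{im\cdot x}$, namely $J_{n_1-m_1}(-2/\omega_1)\,J_{n_2-m_2}(-2/\omega_2)$. Your planned check via the Bessel recurrence would catch this, and the decay bound is unaffected.

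\textbf{Case (I).} The paper uses Fox's theorem to write spectral measures of elementary tensors as convolutions $\mu^{(A_1)}_\chi*\mu^{(A_2)}_\phi$. It then uses that convolution with an a.c.\ measure is a.c., and passes to arbitrary vectors by approximation. Your reduction $H_\omega\cong\bigoplus_{j\in\mathbb{Z}}(\omega_1 j+\Delta_2)$ makes absolute continuity of every spectral measure immediate and bypasses both Fox's theorem and the approximation step. The price is that you need completeness of the one-dimensional Stark eigenbasis, which is the one-dimensional instance of your Case (III) argument. The convolution route, by contrast, uses nothing about $A_1$ beyond self-adjointness.

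\textbf{Case (II).} The paper reuses its eigenbasis $\psi_n=e^{-W}\delta_n$ and notes that the $\psi_{n+j\ell}$, $j\in\mathbb{Z}$, share the eigenvalue $\omega\cdot n$. This gives the stronger conclusion that the spectrum is pure point, made of infinitely degenerate eigenvalues with exponentially localized eigenvectors. Your argument via the commuting translation $T_\ell$ only rules out eigenvalues of finite multiplicity, which is exactly what ``purely essential'' requires. In exchange, it uses nothing beyond translation invariance of the hopping. It therefore carries over verbatim to longer-range hoppings, where, as the paper remarks, the explicit diagonalization can break down.
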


The theorem provides a complete characterization of the spectral properties of the Schr\"odinger operators on the two‑dimensional lattice in the presence of  static fields, depending on the field direction. This characterization relies critically on the fact that the lattice operator involves only nearest‑neighbour hopping, a case known in physics as the tight‑binding model. Consequently, in Case I the two‑dimensional operator can be expressed as the tensor product of two one‑dimensional operators; this structure permits a spectral measure analysis, which shows that the spectrum is purely absolutely continuous. In Cases II and III, for every direction distinct from that of Case I, a unitary transformation can be constructed to diagonalize the operator. However, if the nearest‑neighbour hopping is replaced by other short‑range hopping (and a fortiori by long‑range hopping), there exist many directions for which no such unitary diagonalization is possible. Thus the spectral properties are also sensitive to the range of the hopping. To our knowledge, this dependence has not been observed in previous studies of (quasi‑periodic or random) Schr\"odinger operators.

We should also note that in Case II, the spectrum is purely essential. More specifically, it consists of eigenvalues of infinite multiplicity: each eigenvalue admits infinitely many linearly independent eigenvectors. Moreover, these eigenstates are shown to decay exponentially, with localization centers tending to infinity. Consequently, extended states can be constructed by summing all of these eigenstates that lie in $\ell^{\infty}(\mathbb{Z}^2)$
but not in $\ell^2(\mathbb{Z}^2)$.

\subsection{Persistence of localization under perturbations }\

\begin{defn}(Diophantine angle)\label{Diophantine} 
	We say that $\theta \in [0,2\pi]:=\mathcal{O}$ is a Diophantine angle, if there exist $\gamma>0$ and $\tau>1$ such that $\theta \in \mathrm{DC}_{\gamma,\tau}$, where
$$\mathrm{DC}_{\gamma,\tau}:=\Big\{\theta \in \mathcal{O}:|n_1\cos\theta+n_2\sin\theta|\geq \frac{\gamma}{|n|^{\tau}}, \quad \forall n=(n_1,n_2) \in \mathbb{Z}^2 \backslash \{0\} \Big\}.$$
\end{defn}
\begin{thm}\label{thm2}
	Let $0<\gamma\ll 1$, $\tau>1$, and  $\theta \in \mathrm{DC}_{\gamma,\tau}$. Consider the Schr\"odinger operator $\mathcal{H}_{\theta,\varepsilon}$ defined in \eqref{jiaodu}, which acts on $\ell^2(\mathbb{Z}^2)$ and includes static fields of unit strength with direction parameter $\theta$.  Fix constants $0<\mu<1$ and $\rho,a>0$. Assume that the perturbation $v:\mathbb{Z}^2\to\mathbb{R}$ satisfies
	$$
	|v(n)| \le \bigl(\ln \langle n\rangle\bigr)^{-\sigma}, \qquad \forall n\in\mathbb{Z}^2,
	$$
	where $\langle n\rangle:=\max \{e, |n|\}$ and $\sigma > \tau/\mu$.
	
	Then there exists a sufficiently small constant $\varepsilon_0 = \varepsilon_0(\mu,\tau,\sigma,\gamma,\rho)>0$ such that for every $0<\varepsilon<\varepsilon_0$, there exists a Cantor-like set $\mathcal{O}_{\varepsilon} \subset \mathrm{DC}_{\gamma,\tau}$ satisfying
	\[
	\mathrm{meas}\bigl(\mathrm{DC}_{\gamma,\tau}\setminus \mathcal{O}_{\varepsilon}\bigr) \le C_0\,\varepsilon^{1/10},
	\]
	where $C_0 = C_0(\mu,\tau,\sigma,\gamma,\rho)>0$ is a constant. For every $\theta \in \mathcal{O}_{\varepsilon}$, the operator $\mathcal{H}_{\theta,\varepsilon}$ has purely point spectrum, and every eigenvalue is simple. Moreover, there exists a complete orthonormal basis $\{\psi_n\}_{n\in\mathbb{Z}^2}$ of eigenfunctions such that for all $m,n\in\mathbb{Z}^2$,
	\[
	|\psi_n(m)| \le C_1 \, e^{-\frac{a}{4}|m-n|},
	\]
	with $C_1 = C_1(\mu,\tau,\sigma,\gamma,\rho,\varepsilon)>0$.
\end{thm}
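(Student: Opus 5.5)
The plan is a KAM diagonalization of $\mathcal{H}_{\theta,\varepsilon}$, modelled on the quantum-oscillator reducibility schemes of \cite{BG2001,LY2010}, with $\theta$ as the parameter to be excised. The first step is to conjugate away the Laplacian. This is the unitary transformation behind case (III) of the preceding theorem. For Diophantine $\theta$ there is a unitary $U_\theta$, whose kernel is an exponentially decaying Töplitz matrix, such that
\[
U_\theta^{*}(\Delta+D)U_\theta=\mathrm{diag}(\omega\cdot n)_{n\in\mathbb{Z}^2}.
\]
Concretely, $U_\theta$ is multiplication by $e^{i\phi(x)}$ on the Fourier side. Here $\phi$ solves $\omega\cdot\nabla\phi=2\cos x_1+2\cos x_2$, and under $\mathrm{DC}_{\gamma,\tau}$ this solution is analytic, since the right-hand side has only the modes $\pm e_1,\pm e_2$.

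The perturbation $\varepsilon v$ then becomes $P_0=\varepsilon U_\theta^*VU_\theta$. Its matrix elements satisfy
\[
|P_0(m,n)|\lesssim \varepsilon\, e^{-a|m-n|}\,(\ln\langle m\rangle)^{-\sigma},
\]
with $a$ up to any prescribed width allowed by the analyticity of $\phi$. I would introduce a weighted norm combining this off-diagonal exponential decay with the logarithmic decay in the site. Lipschitz dependence on $\theta$ would be tracked in the same norm.

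The iteration runs as follows. At step $k$ the operator is $N_k+P_k$, where $N_k=\mathrm{diag}(\lambda^{(k)}_n)$ with $\lambda^{(k)}_n=\omega\cdot n+O(\varepsilon)$. One solves the homological equation
\[
F(m,n)=\frac{P_k(m,n)}{\lambda^{(k)}_m-\lambda^{(k)}_n},\qquad m\neq n .
\]
The diagonal of $P_k$ is absorbed into $N_{k+1}$, and one sets $P_{k+1}=e^{-F}(N_k+P_k)e^{F}-N_{k+1}$, which is quadratic in $P_k$. The width is reduced each step, $a_{k+1}=a_k-a/2^{k+3}$, so the final decay rate stays at least $a/4$.

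The small divisors are $\lambda^{(k)}_m-\lambda^{(k)}_n=\omega\cdot(m-n)+\delta_m-\delta_n$. At step $k$ I would impose the non-resonance condition
\[
|\lambda^{(k)}_m-\lambda^{(k)}_n|\ge \frac{\gamma_k}{|m-n|^{\tau}}
\]
only for $|m-n|\le K_k$. The key point is that $|\delta_m-\delta_n|$ is controlled by $\varepsilon(\ln\langle\min(|m|,|n|)\rangle)^{-\sigma}$. So for $|m|$ large enough relative to $K_k$ the Diophantine bound for $\theta$ itself already dominates, and excision is needed only for finitely many $|m|\lesssim \exp(C K_k^{\tau/(\mu\sigma)})$. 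The condition $\sigma>\tau/\mu$ is exactly what makes this count summable against the measure of each bad set, roughly $\gamma_k K_k^{-\tau}$, after a transversality estimate $|\partial_\theta(\lambda_m-\lambda_n)|\gtrsim |m-n|$ away from an $O(|m-n|^{-1})$-neighbourhood. Choosing $\gamma_k$ and $K_k$ geometrically with $\varepsilon_k=\varepsilon^{(3/2)^k}$ should give total excised measure $\le C_0\varepsilon^{1/10}$.

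The main obstacle is this measure estimate and, jointly, preserving the logarithmic decay of $P_k$ in the site variable under the commutator $[F,N_k]$ and the quadratic terms. Preservation is needed because the number of excised pairs is controlled only through it. I would handle it by proving that the class of matrices with $\sup_m(\ln\langle m\rangle)^{\sigma}\sum_n|A(m,n)|e^{a|m-n|}<\infty$ is closed under products, with a loss of width. This uses
\[
\ln\langle m\rangle\le \ln\langle n\rangle+\ln(1+|m-n|),
\]
where the logarithm is absorbed by the exponential.

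Once the iteration is in place, the rest follows. The product $U_\theta\prod_k e^{F_k}$ converges to a unitary whose columns $\psi_n$ decay like $e^{-\frac a4|m-n|}$, and these columns diagonalize $\mathcal{H}_{\theta,\varepsilon}$ with eigenvalues $\lambda^\infty_n$. Simplicity follows from $|\lambda^\infty_m-\lambda^\infty_n|\ge \tfrac{\gamma}{2}|m-n|^{-\tau}>0$ for $m\neq n$, which is guaranteed on $\mathcal{O}_\varepsilon$.
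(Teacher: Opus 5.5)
Your first conjugation is the paper's: up to Fourier conventions, $U_\theta=e^{-W}$ with $W$ the T\"{o}plitz generator of Lemma~\ref{diag_Top1}, i.e.\ multiplication by $e^{\pm i(2\sin x_1/\omega_1+2\sin x_2/\omega_2)}$. Your row-weighted class also plays the role of $\|\cdot\|_a^{\sigma,\alpha}$. The gap is in the non-resonance condition, and it breaks exactly the measure estimate you flagged. Take a polynomial threshold $\gamma_k|m-n|^{-\tau}$ and fix $\ell=m-n$. By your own argument, the sites that are not automatically non-resonant are those with $\ln\langle m\rangle\lesssim(\varepsilon|\ell|^{\tau}/\gamma)^{1/\sigma}$, about $e^{c|\ell|^{\tau/\sigma}}$ of them. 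Each bad set has measure only $\lesssim\gamma_k|\ell|^{-\tau-1}$. A stretched exponential beats every power, so $\sum_{\ell}e^{c|\ell|^{\tau/\sigma}}\gamma_k|\ell|^{-\tau-1}=\infty$ for every $\sigma$. With geometric $\gamma_k$ and $K_k\sim(3/2)^k|\ln\varepsilon|$, the step-$k$ cost $\gamma_k e^{cK_k^{\tau/\sigma}}$ even grows doubly exponentially in $k$. So the union bound cannot produce a set of nearly full measure on which $|\lambda^\infty_m-\lambda^\infty_n|\ge\frac{\gamma}{2}|m-n|^{-\tau}$ holds. Relatedly, $\mu$ never enters a polynomial condition. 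Your exponent $\tau/(\mu\sigma)$, and the claim that $\sigma>\tau/\mu$ is ``exactly'' what makes the count summable, therefore have no source in your scheme.

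The missing idea is the paper's sub-exponential threshold. Require $|D^m_\theta(n)-D^m_\theta(n')|\ge\alpha e^{-\rho|n-n'|^{\mu}}$ for $0<|n-n'|\le N_m$, with $\alpha=\epsilon_0^{1/5}$ fixed. Since $e^{-\rho|\ell|^{\mu}}$ decays faster than any power, $\frac{\gamma}{2}|\ell|^{-\tau}\ge 2\alpha e^{-\rho|\ell|^{\mu}}$ once $\alpha\ll\gamma$. Hence every pair with $\min_{n_0\in\{n,n'\}}\langle n_0\rangle\ge\exp(\frac{\rho}{2}|\ell|^{\tau/\sigma})$ is automatically non-resonant. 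The remaining $\lesssim e^{\rho|\ell|^{\tau/\sigma}}$ pairs each cost $\lesssim\alpha e^{-\rho|\ell|^{\mu}}$, and $\sum_{\ell}e^{\rho|\ell|^{\tau/\sigma}-\rho|\ell|^{\mu}}<\infty$ precisely because $\mu>\tau/\sigma$. The price is a loss $e^{2\rho N_m^{\mu}}/\alpha\le\epsilon_m^{-0.1}/\alpha$ in the homological equation, which is harmless because $\mu<1$.

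Alternatively, you could keep polynomial divisors by letting $\gamma_k$ shrink like a small power of $\varepsilon_k$ rather than geometrically. This works because $e^{cK_k^{\tau/\sigma}}\le\varepsilon_k^{-\eta}$ for any fixed $\eta>0$ when $\varepsilon$ is small. But then the final gap is only $\gamma_{k(\ell)}|\ell|^{-\tau}$, where $k(\ell)$ is the step at which $\ell$ is first treated, not $\frac{\gamma}{2}|\ell|^{-\tau}$. Simplicity would then have to be argued from that weaker gap.
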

\begin{rem}\
	
	$\bullet$ From Lemma \ref{Mes3}, one has the measure estimation : $$\text{meas}\big([0,2\pi]\setminus \mathrm{DC}_{\gamma,\tau}\big) \leq \widetilde{C}\gamma.$$
	
	$\bullet$ Under our assumptions on the parameters $\tau$ and $\mu$, the perturbation term $v$ decays like a power of a logarithm (i.e.,$|v(n)| \le \bigl(\ln \langle n\rangle\bigr)^{-\sigma}, \sigma>1$),  rather than merely like the inverse logarithm.
	
	$\bullet$ The above theorem can be directly generalized to the case of exponentially decaying long-range hopping (satisfying the T\"oplitz property), and the proof remains essentially unchanged. Moreover, the argument can be extended to $\mathbb{Z}^d$ with $d\geq3$. In that setting, one introduces angles $(\theta_1,\cdots,\theta_{d-1})\in[0,2\pi]^{d-1}$ in polar coordinates as parameters, and it suffices to invoke Lemma A.6 in \cite{SW24} to handle the degeneracy issues arising in the  measure estimates. We restrict our attention to $\mathbb{Z}^2$ primarily to better illustrate the influence and role of the direction of the static field.
	
	$\bullet$ More precisely, the above theorem states that,  for a fixed small perturbation with power of logarithmic decay, the localization properties are preserved for most operators in the family parameterized by the direction of the static field. A similar phenomenon was studied for one‑dimensional quasi‑periodic Schr\"odinger operators in \cite{DLYZ2023}; there, however, for a fixed small perturbation with exponential decay, the localization properties are preserved for almost all operators in the quasi‑periodic family.

\end{rem}

Indeed, having established the uniform exponential localization of the eigenstates of the operator, it is straightforward to derive the corresponding exponential dynamical localization.
\begin{cor}
	Under the same assumptions of Theorem \ref{thm2}, for any $\theta \in \mathcal{O}_{\varepsilon}$,  the Schr\"odinger operator $\mathcal{H}_{\theta, \varepsilon}$ shows exponential dynamical localization: there exists some $C_2$ depending on $\mu,\tau,\sigma,\gamma,\rho,\varepsilon$, such that for any $m,n \in \mathbb{Z}^2$:
\begin{equation}
	\sup_{t\in\mathbb{R}}\big| \big \langle e^{-\mathrm{i}t\mathcal{H}_{\theta, \varepsilon}}\delta_m, \delta_n \big \rangle\big|\leq C_2e^{-\frac{a}{4}|m-n|}.
\end{equation}
\end{cor}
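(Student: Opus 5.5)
The plan is to derive the corollary directly from the conclusion of Theorem \ref{thm2}, using the spectral decomposition in the orthonormal eigenbasis. Fix $\theta\in\mathcal{O}_{\varepsilon}$. By Theorem \ref{thm2}, $\mathcal{H}_{\theta,\varepsilon}$ has a complete orthonormal basis $\{\psi_k\}_{k\in\mathbb{Z}^2}$ of eigenvectors with eigenvalues $E_k$, and $|\psi_k(m)|\le C_1 e^{-\frac a4|m-k|}$. Expanding $\delta_m=\sum_k \overline{\psi_k(m)}\psi_k$, which converges in $\ell^2$ by completeness, gives
\begin{equation*}
\big\langle e^{-\mathrm{i}t\mathcal{H}_{\theta,\varepsilon}}\delta_m,\delta_n\big\rangle=\sum_{k\in\mathbb{Z}^2} e^{-\mathrm{i}tE_k}\,\overline{\psi_k(m)}\,\psi_k(n).
\end{equation*}
Taking absolute values removes the $t$-dependence, so
\begin{equation*}
\sup_{t\in\mathbb{R}}\big|\big\langle e^{-\mathrm{i}t\mathcal{H}_{\theta,\varepsilon}}\delta_m,\delta_n\big\rangle\big|\le C_1^2\sum_{k\in\mathbb{Z}^2}e^{-\frac a4(|m-k|+|k-n|)}.
\end{equation*}

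The main point is the lattice convolution estimate. If one uses only $|m-k|+|k-n|\ge|m-n|$, the remaining sum $\sum_k 1$ diverges. So the exponent must be split, writing $e^{-\frac a4(|m-k|+|k-n|)}=e^{-\frac a4|m-n|}\,e^{-\frac a4(|m-k|+|k-n|-|m-n|)}$. For the $\ell^1$ metric on $\mathbb{Z}^2$, the excess $|m-k|+|k-n|-|m-n|$ vanishes on the whole lattice rectangle spanned by $m$ and $n$. That rectangle has roughly $(|m_1-n_1|+1)(|m_2-n_2|+1)$ points, so the plain split produces a polynomial factor in $|m-n|$. This factor can be absorbed either by a slightly smaller rate, or by returning to Theorem \ref{thm2} with the stronger decay it actually provides.

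To get exactly $\frac a4$ as stated, I would use the freedom in Theorem \ref{thm2}: the constant $a>0$ is arbitrary there. I would apply the theorem with $2a$ in place of $a$, which gives decay $e^{-\frac a2|m-k|}$ with a constant $C_1'$. Then
\begin{equation*}
e^{-\frac a2(|m-k|+|k-n|)}\le e^{-\frac a4|m-n|}\,e^{-\frac a4(|m-k|+|k-n|)},
\end{equation*}
and the last factor is summable in $k$ uniformly in $m,n$, since it is at most $e^{-\frac a4|m-k|}$ and $\sum_{j\in\mathbb{Z}^2}e^{-\frac a4|j|}=\coth^2(a/8)<\infty$. This yields the claim with $C_2=C_1'^2\coth^2(a/8)$.

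If the dependence of $\mathcal{O}_\varepsilon$ on $a$ is a concern, I would instead keep $a$ fixed and pay a polynomial factor $C(1+|m-n|)^2$. That factor is absorbed by giving up an arbitrarily small amount of the rate, so $C_2$ would then depend also on how much rate is sacrificed. I expect this bookkeeping to be the only delicate step; everything else is the routine expansion above.
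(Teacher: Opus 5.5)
Your eigenfunction expansion is correct, and so is your diagnosis. From the pointwise bound $|\psi_k(m)|\le C_1e^{-\frac a4|m-k|}$ alone, the best you get is $C(1+|m-n|)^2e^{-\frac a4|m-n|}$, because the $\ell^1$-excess $|m-k|+|k-n|-|m-n|$ vanishes on a whole lattice rectangle.

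The gap is in your repair. Rerunning Theorem \ref{thm2} with $2a$ does not give the corollary as stated. The Cantor set $\mathcal{O}_\varepsilon$ is produced only after $a$ is fixed, and even the statement quantifies it existentially after $a$. In the construction, $\alpha=\epsilon_0^{1/5}$ with $\epsilon_0=e^{C_{\sigma,\tau,a}/\gamma}\varepsilon$ depends on $a$, and so do the truncations $N_m$ through $\delta_m=a/(2c_0(m+1)^2)$; hence the nonresonant sets depend on $a$. With $2a$ you therefore get rate $\frac a4$ only on a possibly different set, and possibly only for a smaller range of $\varepsilon$. Your fallback keeps the right set but gives a strictly smaller rate. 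Neither proves the claim, which is rate exactly $\frac a4$ for every $\theta\in\mathcal{O}_\varepsilon$ and every $\varepsilon<\varepsilon_0$ from Theorem \ref{thm2} with the given $a$.

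The missing idea is to use what the proof of Theorem \ref{thm2} actually establishes, not its pointwise conclusion. The proof gives $Q\mathcal{H}_{\theta,\varepsilon}Q^{-1}=D^{\infty}$ with $\|Q\|_{a/4},\|Q^{-1}\|_{a/4}\le C$. The norm \eqref{norm1} controls the weighted sum over diagonals of the supremum of the entries, not just individual entries. Then $e^{-\mathrm{i}t\mathcal{H}_{\theta,\varepsilon}}=Q^{-1}e^{-\mathrm{i}tD^{\infty}}Q$, and the diagonal unitary satisfies $\|e^{-\mathrm{i}tD^{\infty}}\|_{a/4}=1$. Submultiplicativity (Lemma \ref{lem8.28.1}, item (2)) then gives
\begin{equation*}
\big|\big\langle e^{-\mathrm{i}t\mathcal{H}_{\theta,\varepsilon}}\delta_m,\delta_n\big\rangle\big|\le\|Q^{-1}e^{-\mathrm{i}tD^{\infty}}Q\|_{a/4}\,e^{-\frac a4|m-n|}\le\|Q^{-1}\|_{a/4}\|Q\|_{a/4}\,e^{-\frac a4|m-n|},
\end{equation*}
uniformly in $t$, on the same set and for the same $\varepsilon$.

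In your own sum, this amounts to bounding $|\psi_k(n)|$ and $|\psi_k(m)|$ by the diagonal envelopes $q_{\pm}(j)=\sup_{i-i'=j}|(Q^{\pm1})_{i,i'}|$. These are summable against $e^{\frac a4|j|}$, and that summability is exactly what removes the polynomial factor. The paper only remarks that the corollary is straightforward; this argument is what delivers the stated rate.
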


We have established that the eigenstates of the Schr\"odinger operator in static fields are uniformly exponentially localized, and, importantly, this uniformity persists under perturbations with logarithmic decay. This result naturally raises two questions concerning the relationship between perturbation decay and the localization type.

$\bullet$ First, whereas non‑uniform localization is generic in most quasi‑periodic settings, it remains unknown whether, in the static‑field context, even slower‑decaying perturbations or non‑decaying i.i.d. random perturbations can induce non‑uniform localization. Resolving this would clarify how the decay rate of the perturbation qualitatively determines the type of localization (uniform vs. non‑uniform).  Classical KAM techniques are largely limited to proving uniform localization; non‑uniform  diagonalization frameworks have been developed by Eliasson \cite{E1997} for quasi‑periodic operators and Imbrie \cite{J2016} for random ones, but their applicability to the present problem is still open.

$\bullet$  Second, monotonic quasi‑periodic operators are also known to possess uniform exponential localization \cite{CSZ25,KPS25}. A parallel question is whether this uniform localization remains stable under slowly decaying (non‑exponential) perturbations, such as power‑law or logarithmic, in the same spirit as our analysis for the static‑field case.

\section{Functional setting}

Let $a\geq0$, the subspace $\mathcal{S}:=\ell^1_{a}(\mathbb{Z}^2)\subset\ell^1(\mathbb{Z}^2)$
consist of all sequences
\begin{equation}\label{25.9.7.1}
	u=\{u(n)\}_{n\in\mathbb{Z}^2}\in\mathbb{C}^{\mathbb{Z}^2}
	\end{equation}
with finite norm
\begin{equation}
	\left\| u\right\|_{a}=
	\sum_{n\in\mathbb{Z}^2}|u(n)| e^{a| n |}.
	\end{equation}
Similarly, the space $\ell^{\infty}(\mathbb{Z}^2)$
consists of all $u$ defined in \eqref{25.9.7.1} with finite norm
\begin{equation*}|u|_{\infty}=\sup_{n\in\mathbb{Z}^2}|u(n)|.\end{equation*}

Consider a linear operator $A$ on
$\mathcal{S}$
with the matrix representation $A=(A_{i,j})_{i,j\in\mathbb{Z}^2}$ given by
\begin{equation}\label{25.8.30.1}
	A_{i,j}=\langle \delta_{i},A\delta_{j}\rangle
	\end{equation}
with
$\langle \cdot, \cdot\rangle$ denoting the canonical pairing between $\ell^\infty$ and $\ell^1$, and $\{\delta_i\}_{i\in\mathbb{Z}^2}$ being the standard basis of $\ell^1(\mathbb{Z}^2)$.
Furthermore, we denote $[A]$ as the diagonal part of $A$, i.e.,
\[
[A]_{i,j} :=
\begin{cases}
	A_{i,j}, & i = j, \\
	0, & \text{otherwise.}
\end{cases}
\]

\begin{defn}\label{space s}
	Let $a\ge 0$. We say that a linear operator $A$ on $\mathcal{S}$ belongs to the operator space
	$\mathcal{B}_{a}$ if the following norm is finite:
	\begin{equation}\label{norm1}
		\left\| A\right\|_{a}
		:=  \sum_{n\in\mathbb{Z}^2} e^{a| n |}
		\sup_{i-j=n}
		|A_{i,j}|<+\infty.
	\end{equation}
Suppose the operator $A$ is Lipschitz continuous with respect to the parameter $\theta\in\mathcal{O}$. For any $\alpha>0$, we say that $A(\theta)\in \mathcal{B}_{a}^{\alpha} $ if
\begin{equation}\label{norm_lip}
	\|A\|_a^\alpha := \sup_{\theta \in \mathcal{O}}  \|A\|_a + \sup_{\theta_1\neq\theta_2 \in \mathcal{O}} \alpha
	\frac{ \left\| \bigtriangleup A\right\| _a}{\left| \bigtriangleup \theta \right| } < +\infty,
\end{equation}
where we denote $\bigtriangleup A :=A(\theta_1) - A(\theta_2)$
and $\bigtriangleup \theta :=\theta_1 - \theta_2 $.

\begin{rem}It is readily verified that if an operator
	$A$ belongs to
	$\mathcal{B}_{a}$, then it is also a bounded operator on the standard
	$\ell^2$ space. Consequently, self-adjoint and unitary operators can be defined with respect to the canonical inner product on $\ell^2$.
\end{rem}

\end{defn}

\begin{defn}(The commutator).
For two operators $A,B\in\mathcal{B}_a$, define the commutator as
\begin{equation*}
	[A,B]:=AB-BA.
\end{equation*}
The adjoint action of $A$ on $B$ is defined recursively as$$\text{ad}_A(B) := [A, B],$$and for $k \ge 2$,$$\text{ad}_A^k(B) := [A, \text{ad}_A^{k-1}(B)].$$We refer to $\text{ad}_A^k(B)$ as the $k$-th order commutator for $k \ge 1$.
\end{defn}

%

\begin{lem}\label{lem8.28.1}
	Let $a>0$, $A,B\in\mathcal{B}_a$ and $u\in\mathcal{S}$. Then, the following assertions hold:
	
	\begin{enumerate}
		\item  The vector $Au\in\mathcal{S}$ satisfies
		\[
		\left\| Au\right\|_a\leq
		\left\| A \right\|_a \left\| u\right\|_a.
		\]
		\item The operators $AB, A^n\in \mathcal{B}_a$ satisfy respectively
		\[	\left\| AB \right\|_a \leq \left\| A \right\|_a\left\| B \right\|_a
		\quad\text{and}\quad
			\left\| A^n \right\|_a \leq
		\left\| A \right\|_a^n.
		\]
		\item For any integer $k\ge1$, the $k$-th order commutator
		$\mathbf{ad}_A^k(B) \in \mathcal{B}_a$ satisfies
      	\[		\left\| \mathbf{ad}_A^k(B)\right\| _a
      	\leq(2\left\| A \right\|_a)^k\left\| B \right\|_a.\]
		\item Let $\phi=e^{A}$ with $A\in\mathcal{B}_a$. If $\|A\|_a \le 1$, then:$$\|\phi^{\pm 1}-\mbox{Id}\|_a \le e^{\|A\|_a} - 1 \le 2\|A\|_a.$$
		
		\item Given $N \in \mathbb{N}$, we can define the truncation operator $\Pi_{N}$ for operator $A$,
		\begin{align}
			\Pi_{N}A_{n,n'}=\begin{cases}
				A_{n,n'},\ \ |n-n'|\leq N,\\
				0,\quad\quad\   \mbox{otherwise}.
			\end{cases}
		\end{align}
		For any $0<\delta<a$, the operator $\Pi_{N}^{\bot}=Id-\Pi_{N}$ satifies 
		\begin{equation}
			\|\Pi_{N}^{\bot}A\|_{a-\delta} \leq e^{-\delta N} \|A\|_{a} .
			\end{equation}
		
		\item Items (1)-(4) still hold when replacing the norm $\|\cdot\|_{a}$ with $\|\cdot\|^{\alpha}_{a}$.
	\end{enumerate}
\end{lem}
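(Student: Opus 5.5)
The plan is to work entry-by-entry in the matrix representation \eqref{25.8.30.1}, using a single pointwise estimate: for $i-j=n$ one has $|A_{i,j}|\le \sup_{i'-j'=n}|A_{i',j'}|=:\hat A(n)$. Then $\|A\|_a=\sum_n e^{a|n|}\hat A(n)$, and the submultiplicativity $e^{a|n|}\le e^{a|k|}e^{a|n-k|}$ from the triangle inequality for $|\cdot|$ reduces every claim to a convolution estimate for nonnegative sequences on $\mathbb{Z}^2$.

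For (1), I would write $(Au)(i)=\sum_j A_{i,j}u(j)$. This gives $|(Au)(i)|\le\sum_j \hat A(i-j)|u(j)|$. Multiplying by $e^{a|i|}\le e^{a|i-j|}e^{a|j|}$ and summing over $i$ first, then $j$ (Tonelli), yields $\|Au\|_a\le\|A\|_a\|u\|_a$. For (2), $(AB)_{i,j}=\sum_k A_{i,k}B_{k,j}$, so for $i-j=n$ we get $|(AB)_{i,j}|\le\sum_k \hat A(i-k)\hat B(k-j)=\sum_m \hat A(m)\hat B(n-m)$. The right-hand side depends only on $n$, so $\widehat{AB}(n)\le(\hat A*\hat B)(n)$. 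Weighting by $e^{a|n|}$ and summing gives $\|AB\|_a\le\|A\|_a\|B\|_a$. The bound on $A^n$ then follows by induction, and absolute convergence of the sums justifies that $AB$ is well defined on $\mathcal S$. For (3), the triangle inequality and (2) give $\|[A,X]\|_a\le 2\|A\|_a\|X\|_a$, and induction on $k$ with $X=\mathrm{ad}_A^{k-1}(B)$ finishes it.

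For (4), I define $e^{\pm A}=\sum_{k\ge0}(\pm A)^k/k!$. By (2) this series converges absolutely in the Banach space $\mathcal B_a$; completeness holds because $\mathcal B_a$ is a weighted $\ell^1$ space of diagonal sup-sequences. Then $\|\phi^{\pm1}-\mathrm{Id}\|_a\le\sum_{k\ge1}\|A\|_a^k/k!=e^{\|A\|_a}-1$. Since $e^x-1\le x e^x$ and $(e^x-1)/x$ is increasing, on $[0,1]$ we have $e^x-1\le (e-1)x\le 2x$. For (5), the operator $\Pi_N^\perp A$ keeps only the diagonals with $|n|>N$. Hence
\[
\|\Pi_N^\perp A\|_{a-\delta}=\sum_{|n|>N}e^{-\delta|n|}e^{a|n|}\hat A(n)\le e^{-\delta N}\|A\|_a.
\]

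For (6), I would apply the above to $\Delta(AB)=\Delta A\,B(\theta_1)+A(\theta_2)\,\Delta B$. This gives
\[
\|AB\|^\alpha_a\le\sup\|A\|_a\sup\|B\|_a+\alpha\sup\tfrac{\|\Delta A\|_a}{|\Delta\theta|}\sup\|B\|_a+\sup\|A\|_a\,\alpha\sup\tfrac{\|\Delta B\|_a}{|\Delta\theta|}\le\|A\|^\alpha_a\|B\|^\alpha_a,
\]
because the cross term of the product is nonnegative. Items (1) and (3) transfer the same way. For (4), I use the telescoping identity $\Delta(A^k)=\sum_{j}A(\theta_1)^j\,\Delta A\,A(\theta_2)^{k-1-j}$. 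It gives $\|A^k\|^\alpha_a\le(\|A\|^\alpha_a)^k$, and the exponential series argument then carries over verbatim. The hypothesis needs the reading $\|A\|^\alpha_a\le1$, or else the bound $e^{x}-1$ is taken with $x=\|A\|^\alpha_a$.

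The main obstacle is only bookkeeping. One has to check that the supremum over diagonals in the definition of $\|\cdot\|_a$ really is compatible with composition, which is the step $\widehat{AB}\le\hat A*\hat B$. One also has to handle the Lipschitz seminorm for the exponential in (6).
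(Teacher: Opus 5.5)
Your proposal is correct and follows essentially the same approach as the paper. The paper cites Lemma 2.4 of \cite{HS26} for (1)--(2), says (3)--(4) follow by direct computation from them, and reads (5) off the definition of $\|\cdot\|_a$; you make explicit the diagonal-convolution bound $\widehat{AB}\le \hat A*\hat B$ behind that citation and the product rule $\bigtriangleup(AB)=\bigtriangleup A\,B(\theta_1)+A(\theta_2)\,\bigtriangleup B$ that the paper leaves implicit for (6).
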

\begin{proof}
	The proof of items (1)–(2) can be found in Lemma 2.4 of \cite{HS26}. As for items (3)–(4), they follow directly from the application and calculation based on items (1)–(2). Item (5) follows directly from Definition \ref{space s}.
\end{proof}

%

\begin{defn}(T\"{o}plitz operator).
   For the linear operator $A$ on $\mathcal{S}$ with the matrix representation \eqref{25.8.30.1}, if the matrix entries satisfy
	$$A_{i,j}=A_{i',j'},$$
	whenever
	$i-j=i'-j'$ for all $i,j,i',j'\in\mathbb{Z}^2$, then $A$ is called a T\"{o}plitz operator.
\end{defn}

For T\"{o}plitz operators, we have the following property.
\begin{lem}\label{lem25.8.31.1}
	The commutator of any two T\"{o}plitz operators vanishes.
\end{lem}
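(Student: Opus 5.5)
The plan is to write every Töplitz operator as a (generalized) convolution and then use commutativity of convolution on the abelian group $\mathbb{Z}^2$. Let $A,B$ be Töplitz operators on $\mathcal{S}$, say in $\mathcal{B}_a$ so that all the sums below converge absolutely. By the definition there are functions $\hat a,\hat b:\mathbb{Z}^2\to\mathbb{C}$ with
\[
A_{i,j}=\hat a(i-j),\qquad B_{i,j}=\hat b(i-j),\qquad i,j\in\mathbb{Z}^2.
\]
The membership $A,B\in\mathcal{B}_a$ means exactly that $\sum_n e^{a|n|}|\hat a(n)|<\infty$, and likewise for $\hat b$. In particular $\hat a,\hat b\in\ell^1(\mathbb{Z}^2)$.

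First I compute the matrix entries of the product. By Lemma \ref{lem8.28.1}(2) the product $AB$ lies in $\mathcal{B}_a$, and its entries are given by the absolutely convergent sum
\[
(AB)_{i,j}=\sum_{k\in\mathbb{Z}^2}\hat a(i-k)\,\hat b(k-j).
\]
Convergence holds because $\hat b$ is bounded and $\hat a$ is summable. I then substitute $m=i-k$, so that $k-j=(i-j)-m$. This gives $(AB)_{i,j}=(\hat a*\hat b)(i-j)$, where $*$ is convolution on $\mathbb{Z}^2$. The same computation yields $(BA)_{i,j}=(\hat b*\hat a)(i-j)$.

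It remains to check that $\hat a*\hat b=\hat b*\hat a$. For fixed $n$, the change of variables $m\mapsto n-m$ is a bijection of $\mathbb{Z}^2$. Absolute convergence, from $\ell^1*\ell^1\subset\ell^1$, allows the reindexing, so
\[
\sum_m\hat a(m)\hat b(n-m)=\sum_{m}\hat a(n-m)\hat b(m).
\]
Hence $(AB)_{i,j}=(BA)_{i,j}$ for all $i,j$. Since an operator in $\mathcal{B}_a$ is determined by its matrix via \eqref{25.8.30.1}, this gives $[A,B]=0$. As a by-product the argument shows that $AB$ is again Töplitz, which is useful later when commuting $\Delta$ with other Töplitz blocks.

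The only point needing care is that the matrix product, and the interchange of summation order, are legitimate. This is where I would invoke the $\mathcal{B}_a$ (or at least $\ell^1$-summability) hypothesis. In the intended application to the Laplacian $\Delta$, the symbol $\hat a$ is finitely supported, so this concern is vacuous.
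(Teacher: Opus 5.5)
Your proof is correct and is essentially the paper's argument. The paper reindexes the matrix-product sum directly by the reflection $k\mapsto i+j-k$ of $\mathbb{Z}^2$, which is the same commutativity-of-convolution substitution you carry out after passing to the symbols $\hat a,\hat b$. Your version additionally makes explicit the absolute convergence coming from $A,B\in\mathcal{B}_a$, which the paper leaves implicit, and it records that $AB$ is again T\"{o}plitz.
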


\begin{proof}Let $A, B$ be two T\"{o}plitz operators.
By Definition 3.4, their matrix entries satisfy $A_{i,k} = A_{i-k, 0}$ and $B_{k,j} = B_{k-j, 0}$ for all $i, j, k \in \mathbb{Z}^2$. For any $i, j \in \mathbb{Z}^2$, the entries of the product $AB$ are given by
$$(AB)_{i,j} = \sum_{k \in \mathbb{Z}^2} A_{i,k} B_{k,j}.$$Let $m = i + j - k$. As $k$ ranges over $\mathbb{Z}^2$, $m$ also ranges over $\mathbb{Z}^2$. By the T\"{o}plitz property, we have:
$$A_{i,k} = A_{i-k, 0} = A_{m-j, 0} = A_{m,j}, \quad B_{k,j} = B_{k-j, 0} = B_{i-m, 0} = B_{i,m}.$$
Substituting these into the summation, we obtain:
\begin{equation}\label{26.3.1.1}(AB)_{i,j} = \sum_{m \in \mathbb{Z}^2} A_{m,j} B_{i,m} = \sum_{m \in \mathbb{Z}^2} B_{i,m} A_{m,j} = (BA)_{i,j}, \end{equation}
where \eqref{26.3.1.1} is a direct consequence of the translation invariance of $A$ and $B$. It then follows that
$$[A, B] = AB - BA = 0.$$The proof is complete.
\end{proof}

\section{Spectral classification depending on field direction}

\subsection{Directions parallel to the lattice: Proof of Case I}\

Without loss of generality, we may assume that $\omega_2=0$. Then we will prove it under the following steps.

\textbf{Step 1: Tensor product decomposition.} Identify $\ell^2(\mathbb{Z}^2)$ with the Hilbert space tensor product $\ell^2(\mathbb{Z})\otimes\ell^2(\mathbb{Z})$ via
$$\ell^2(\mathbb{Z}^2)\ni\{\psi_{n,n}\}_{n,m\in\mathbb{Z}}\longleftrightarrow
\sum_{n,m\in\mathbb{Z}}\psi_{n,m}(\delta_n\otimes \delta_m),$$
where $\{e_n\}_{n\in\mathbb{Z}}$ is the standard orthonormal basis of $\ell^2(\mathbb{Z})$. Define two operators:
\begin{itemize}
\item $A_1$ on $\ell^2(\mathbb{Z})$ with domain
$$\mathcal{D}(A_1)=\{\chi\in\ell^2(\mathbb{Z}):\sum_{n\in\mathbb{Z}}|n\chi_n|^2<\infty\}$$
acting as
\begin{equation}\label{26.5.8.1} (A_1 \chi)_n = (\chi_{n-1}+2\chi_n+\chi_{n+1}) + \omega_1 n \chi_n. \end{equation}

\item \( A_2 \) on \( \ell^2(\mathbb{Z}) \) acting as  
\[ (A_2 \phi)_m =\phi_{m-1}+2\phi_m+\phi_{m+1}. \]
\end{itemize}
Then $A_2$ is bounded and self-adjoint, with spectrum 
$[-2,2]$ and purely absolutely continuous spectral type.
The operator $A_1$
is essentially self-adjoint (see Lemma \ref{lem26.5.8} in  Appendix) and its closure is self-adjoint. In the tensor product representation,
$$H=A_1\otimes I+I\otimes A_2,$$
understood as the closure of the sum defined on algebraic tensors.

\textbf{Step 2: Verification of the separability condition.} Fox's theorem in \cite{F1976} states that if there exist cores
$D_1\subset\mathcal{D}(A_1)$ and $D_2\subset\mathcal{D}(A_2)$ such that the linear span of $D_1\otimes D_2$ is a core for $H$, then for all $\psi_1\otimes\psi_2\in D_1\otimes D_2$,
$$H(\psi_1\otimes\psi_2)=(A_1\psi_1)\otimes\psi_2+\psi_1\otimes(A_2\psi_2),$$
and the spectral measures satisfy a convolution formula.

Let $$D_1=\{\chi\in\ell^2(\mathbb{Z}):\chi\ \mbox{has finite support}\}$$ and $$D_2=\{\phi\in\ell^2(\mathbb{Z}):\phi\ \mbox{has finite support}\}.$$
Then we can verify the following:
\begin{itemize}
	\item $D_1\subset\mathcal{D}(A_1):$ If $\chi$ has finite support, then
	$$(A_1\chi)_n=(\chi_{n-1}+2\chi_n+\chi_{n+1})+\omega_1n\chi_n$$
	also has finite support (the support expands by at most one lattice site in each direction). Hence $A_1\chi\in\ell^2(\mathbb{Z})$.
	\item $D_1$ is a core for $A_1$: Since $A_1$
	is essentially self-adjoint, any dense subspace that is invariant under the resolvent (or simply any dense subspace contained in $\mathcal{D}(A_1)$) is a core. Finite-support sequences are dense in $\ell^2(\mathbb{Z})$, and they are contained in $\mathcal{D}(A_1)$.
	\item $D_2$ is a core for $A_2$: The same argument applies because $A_2$ is bounded (in fact, $D_2$ is dense and contained in $\mathcal{D}(A_2)$).
	\item Core property for $H$: The algebraic tensor product $D_1\otimes D_2$ spans the set of all finite-support sequences on $\ell^2(\mathbb{Z}^2)$ which is dense in $\ell^2(\mathbb{Z}^2)$. Moreover, since both $A_1$ and $A_2$ are self-adjoint and commute strongly, the set $D_1\otimes D_2$
	is a core for $H$.
\end{itemize}
Thus the hypotheses of Fox's theorem are satisfied.

\textbf{Step 3: Convolution formula for spectral measures.} By Fox's theorem, for any $\chi\in D_1$ and $\phi\in D_2$, let $\psi=\chi\otimes\phi$. Then the spectral measure $\mu_{\psi}$ of 
$H$ with respect to $\psi$ is
$$\mu_{\psi}=\mu_{\chi}^{(A_1)}*\mu_{\phi}^{(A_2)},$$
where $\mu_{\chi}^{(A_1)}$ is the spectral measure of $A_1$
for the vector $\chi$, and $\mu_{\phi}^{(A_2)}$ is the spectral measure of $A_2$
for the vector $\phi$.

By linearity, for any finite linear combination 
$\Psi=\sum_{j=1}^N\alpha_j(\chi_j\otimes\phi_j)$, the spectral measure $\mu_{\Psi}$ is a finite linear combination of such convolutions.

\textbf{Step 4: Absolute continuity of $\mu_{\phi}^{(A_2)}$.} The operator $A_2=\Delta$ on $\ell^2(\mathbb{Z})$ is the discrete Laplacian. Its spectral type is well known:
\begin{itemize}
	\item The spectrum is $[-2,2]$, purely absolutely continuous.
	\item For any $\phi\in\ell^2(\mathbb{Z})$, the spectral measure 
$\mu_{\phi}^{(A_2)}$ is absolutely continuous with respect to the Lebesgue measure. 
%
	%
Thus, $\mu_{\phi}^{(A_2)}\ll d\lambda$.

\end{itemize}

\textbf{Step 5: Convolution preserves absolute continuity.} 

\begin{lem} Let $\nu_1$ be any finite Borel measure on $\mathbb{R}$ and let $\nu_2$ be a finite Borel measure with $\nu_2\ll d\lambda$.
Then the convolution $\nu_1*\nu_2$ is absolutely continuous with respect to $d\lambda$.
\end{lem}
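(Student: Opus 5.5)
The plan is to verify absolute continuity directly from the definition: I will show that every Lebesgue-null Borel set $E\subset\mathbb{R}$ satisfies $(\nu_1*\nu_2)(E)=0$. By definition of the convolution of finite Borel measures, $\nu_1*\nu_2$ is the push-forward of the product measure $\nu_1\otimes\nu_2$ on $\mathbb{R}^2$ under the addition map $(x,y)\mapsto x+y$, so
\[
(\nu_1*\nu_2)(E)=\int_{\mathbb{R}^2}\mathbf{1}_E(x+y)\,d(\nu_1\otimes\nu_2)(x,y).
\]
The indicator $(x,y)\mapsto\mathbf{1}_E(x+y)$ is a nonnegative Borel function on $\mathbb{R}^2$, since addition is continuous and $E$ is Borel. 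Both measures are finite, hence $\sigma$-finite, so Tonelli's theorem lets me integrate first in $y$:
\[
(\nu_1*\nu_2)(E)=\int_{\mathbb{R}}\nu_2(E-x)\,d\nu_1(x).
\]

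Next I use the hypothesis on $\nu_2$. Lebesgue measure is translation invariant, so $\lambda(E-x)=\lambda(E)=0$ for every $x\in\mathbb{R}$. Since $\nu_2\ll d\lambda$, this gives $\nu_2(E-x)=0$ for every $x$. The integrand therefore vanishes identically, and $(\nu_1*\nu_2)(E)=0$, which is exactly $\nu_1*\nu_2\ll d\lambda$.

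As a complementary description, I can write $\nu_2=g\,d\lambda$ with $g\in L^1$ by Radon--Nikodym. Fubini then exhibits the density of $\nu_1*\nu_2$ explicitly as $h(t)=\int g(t-x)\,d\nu_1(x)$, which satisfies $\|h\|_{L^1}\le\nu_1(\mathbb{R})\|g\|_{L^1}$. Either route proves the lemma.

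There is no substantial obstacle. The only points needing care are measure-theoretic: the Borel measurability of $\mathbf{1}_E(x+y)$ and the use of finiteness (so $\sigma$-finiteness) to justify Tonelli. For the application in Step 5 I also note that the spectral measures arising there are complex linear combinations of convolutions. Absolute continuity passes to each convolution term separately, and hence to finite linear combinations. For a polarized, complex measure $\nu_1$, the same argument runs with $|\nu_1|$ in place of $\nu_1$.
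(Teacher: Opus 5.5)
Your proof is correct, but your main argument takes a different route from the paper's. The paper first writes $d\nu_2=f(y)\,dy$ by Radon--Nikodym. It then shifts $y\mapsto y-x$ and swaps the integrals to exhibit an explicit density $g(y)=\int f(y-x)\,d\nu_1(x)$ for $\nu_1*\nu_2$. You instead check the definition of absolute continuity directly on a Lebesgue-null Borel set $E$. Tonelli gives $(\nu_1*\nu_2)(E)=\int\nu_2(E-x)\,d\nu_1(x)$, and translation invariance of $\lambda$-null sets makes the integrand vanish identically.

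Your route is more economical. It uses only that $\nu_2$ vanishes on translates of null sets, so it needs neither a Radon--Nikodym derivative nor a change of variables. The paper's route costs slightly more but returns more: the density itself, with the bound $\|g\|_{L^1}\le\nu_1(\mathbb{R})\|f\|_{L^1}$. Your ``complementary description'' is exactly that argument, so nothing is lost.

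Your closing remark is also well taken. In Step 3 the spectral measure of $\Psi=\sum_j\alpha_j\,\chi_j\otimes\phi_j$ is quadratic, not linear, in $\Psi$. What actually appears there are convolutions of complex cross-measures, not only the positive measures in the lemma. Your argument covers these: dominate by $|\nu_1|$ and read $\nu_2\ll\lambda$ as $|\nu_2|\ll\lambda$. The paper's ``by linearity'' sentence passes over this point.
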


\begin{proof}
Since $\nu_2\ll d\lambda$, there exists a density $f\in L^1(\mathbb{R},d\lambda)$ such that $d\nu_2(y)=f(y)dy$. For any Borel set $B\subset\mathbb{R}$,
$$(\nu_1*\nu_2)(B)=\iint\mathbf{1}_B(x+y)d\nu_1(x)d\nu_2(y)=
\int\Big(\int\mathbf{1}_B(x+y)f(y)dy\Big)d\nu_1(x).$$
Change variables $y\mapsto y-x$ in the inner integral:	
$$(\nu_1*\nu_2)(B)=
\int\Big(\int_Bf(y-x)dy\Big)d\nu_1(x)=\int_B
\Big(\int f(y-x)d\nu_1(x)\Big)dy.$$
Define $g(y):=\int f(y-x)d\nu_1(x)$. Then $g=f*\nu_1$ is the convolution of an $L^1$ function with a finite measure; such a convolution belongs to $L^1(\mathbb{R})$ and defines a density. Therefore $d(\nu_1*\nu_2)(y)=g(y)dy$, so $\nu_1*\nu_2\ll d\lambda.$	
\end{proof}

Applying the lemma with $\mu_1=\mu_{\chi}^{(A_1)}$ and $\mu_2=\mu_{\phi}^{(A_2)}$ gives
$\mu_{\chi\otimes\phi}\ll d\lambda.$

\textbf{Step 6: Extension to all vectors.} Let $\Psi\in\ell^2(\mathbb{Z}^2)$ be arbitrary. Since finite linear combinations of elementary tensors $\chi\otimes\phi$ with $\chi,\phi$ of finite support are dense in $\ell^2(\mathbb{Z}^2)$,
there exists a sequence $\{\Psi_k\}$ of such vectors converging to
$\Psi$. For each $k$, the spectral measure $\mu_{\Psi_k}$, is absolutely continuous.

A standard result in spectral theory states that if $\Psi_n\rightarrow\Psi$ in norm, then $\mu_{\Psi_n}$ converges weakly to $\mu_{\Psi}$. Although the set of absolutely continuous measures is not closed under weak convergence in general, the sequence of Radon-Nikodym derivatives here is equi-integrable due to the uniform norm convergence of vectors, which implies that the weak limit $\mu_{\Psi}$ remains absolutely continuous.

Thus for every $\Psi\in\ell^2(\mathbb{Z}^2)$, the spectral measure
$\mu_{\Psi}$ is absolutely continuous. This implies that the spectral projection-valued measure $P_H$ of $H$ assigns zero mass to any Borel set of zero Lebesgue measure, i.e., the spectrum of $H$ is purely absolutely continuous.

\subsection{Other direction: Proof of Case II and Case III}

In this section, we aim to construct a unitary operator of the form $e^{W}$ to diagonalize the Schrödinger operator $H_{\omega} = \Delta + D$ defined in \eqref{25.8.18.1}. To this end, we first introduce the discrete Laplacian $\Delta = (\Delta_{i,j})_{i,j \in \mathbb{Z}^2}$, given by
\begin{equation}\label{25.8.31.5}
	\Delta_{i,j}=
	\begin{cases}
		1,\ |i-j|=1,\\
		0,\ \ \mbox{otherwise},
	\end{cases}
\end{equation}
and the unperturbed diagonal operator
\begin{equation}\label{D_diag}
	D=\mathbf{diag}\{ \omega\cdot n :n\in\mathbb{Z}^2\}\quad\text{with}
	\quad
	\omega=(\omega_1, \omega_2).
\end{equation}

\begin{proof} We begin to compute the similarity transformation of $\Delta+D$ by $e^{W}$ via the Lie algebraic expansion:
			\begin{align}
			 e^{W}(\Delta+D)e^{-W} 
			&=\sum_{n=0}^{\infty}\frac{\mathbf{ad}_W^n(\Delta+D)}{n!}
			\label{Lie}	 \\
			&=\left( D+[W,D]+\Delta\right) +\sum_{n=2}^{\infty}\frac{\mathbf{ad}_W^n(D)}{n!}
			+\sum_{n=1}^{\infty}\frac{\mathbf{ad}_W^n(\Delta)}{n!}.
			\nonumber
			\end{align}
			Then the operator $W$ can be determined as a solution of the equation $$[W,D]+\Delta=0.$$
			Expanding this condition in terms of matrix elements, we obtain for any distinct  $i,j \in \mathbb{Z}^2$:
			\begin{equation}
				W_{i,j}(D(j)-D(i))+\Delta_{i,j}=0.
			\end{equation}
			Since $D$ is diagonal, we have $D(j)-D(i)=\omega\cdot(j-i)$. Consequently, we obtain the explicit expression for $W$ with
			\begin{align}\label{Wij}
				W_{i,j}=\begin{cases}
					\frac{\Delta_{i,j} }
					{\omega\cdot(i-j) },\ \ |i-j|=1,\\
					0,\quad\quad\ \quad  \mbox{otherwise}.
				\end{cases}
			\end{align}
		Notice that, for $i=(i_1,i_2)$, when $|i-j|=1$, there are only four possibilities for the site $j$
		\begin{equation}
			j=(i_1\pm1,i_2) \quad or \quad 
			j=(i_1, i_2\pm1).
			\end{equation}
		Hence, for $|i-j|=1$, one has $$\omega\cdot (i-j)=\pm \omega_1 \quad or \quad \omega\cdot (i-j) =\pm \omega_2,$$
		depending on whether the neighboring sites differ in the first or second coordinate, respectively.
		Let $$ \bm[\omega\bm]=\min\{|\omega_1|, |\omega_2|\},$$
		one has 
		\begin{equation}
			|W_{i,j}| \leq \frac{1}{|\omega_1|}, \quad |i-j|=1
			\end{equation}
			and $W_{i,j}=0$ for other sites.
			From Definition \ref{space s},  for any $a>0$, one sees that
			\begin{equation}
				\|W\| \leq \frac{ 2e^a}{\bm[\omega\bm]}.
				\end{equation}
				Furthermore, it is evident that $\Delta$ is a T$\ddot{\mbox{o}}$plitz operator. From the definition of $W$, we can see that $W$
				is also a T$\ddot{\mbox{o}}$plitz operator. For instance, when  $j-i=(1,0)$, we have 
				$W_{i,j}=\frac{1}{\omega_1}.$
				
				By Lemma \ref{lem25.8.31.1}, the commutator of  $\Delta$ and $W$  is zero. Therefore,
		\begin{align}\label{commu}
		e^{W}(\Delta+D)e^{-W}
		=&D+\sum_{n=2}^{\infty}\frac{\mathbf{ad}_W^n(D)}{n!}
		+\sum_{n=1}^{\infty}\frac{\mathbf{ad}_W^n(\Delta)}{n!}	\\
		=&D+\sum_{n=2}^{\infty}\frac{\mathbf{ad}_W^{n-1}(-\Delta)}{n!}
		+\sum_{n=1}^{\infty}\frac{\mathbf{ad}_W^n(\Delta)}{n!}
		=D. 
		\end{align}
		
	Let $\psi_n=e^{-W}\delta_n$, one sees that
	\begin{equation}\label{26.6.25.1}
		\begin{split}
		H_{\omega}\psi_n&=e^{-W}D\delta_n=D(n)e^{-W}\delta_n\\
		&=(\omega\cdot n)\psi_n.
			\end{split}
		\end{equation}
	Moreover, one has
		
		\begin{equation}\label{eigen}
			\psi_n(m)=\sum_{k \in \mathbb{Z}^2}(e^{-W})_{m,k}\delta_n(k)=(e^{-W})_{m,n},
			\end{equation}
			and 
				\begin{equation}\label{eigenstate}
					\begin{split}
				|\psi_n(m)|&\leq |(e^{-W})_{m,n}| \leq \|e^{-W}\|_ae^{-a|m-n|}\\
				&\leq e^{\|W\|_a}\|e^{-a|m-n|}\\
			& \leq 	 e^{\frac{ 2e^a}{\bm[\omega\bm]}}e^{-a|m-n|}.
			\end{split}
			\end{equation}
			
			From equations \eqref{26.6.25.1}-\eqref{eigenstate} and it can be seen that the operator 
			$H_{\omega}$ possesses a series of eigenvalues 
		$\{\omega\cdot n\}_{n\in \mathbb{Z}^2}$ with corresponding eigenvectors $\{\psi_n\}_{n\in \mathbb{Z}^2}$ , which are concentrated at site  $n$ and decay exponentially in space. However, in Case II and III, there are some differences in the properties of its spectrum.
		
		$\textbullet$ \textbf{Case II:} Under the given definition of 
		$\omega$,  there exists 
		a non-zero vector $\ell \in \mathbb{Z}^2 \setminus \{0\}$, such that $\omega \cdot \ell =0$. Consequently, for any 
		$n \in \mathbb{Z}^2 $ and $j\in \mathbb{Z}$, we have
		$$\omega \cdot n =\omega \cdot (n+j\ell).$$
		 And this indicates that for the eigenvalue $\omega \cdot n$
	 there exist infinitely many mutually orthogonal eigenvectors (namely $\psi_{n+j\ell}$). Hence, the eigenvalue has infinite multiplicity, and therefore  belongs to the essential spectrum.

		$\textbullet$ \textbf{Case III:} In the case of an  irrational direction,  we know that for any $m\neq n$, the eigenvalues $\omega \cdot n \neq\omega \cdot m$. Since the corresponding eigenvectors $\psi_n $ and $\psi_m$ are also mutually orthogonal, this implies that the operator has purely discrete point spectrum, and all eigenvectors  satisfy uniform exponential localization.
\end{proof}

\section{Persistence of localization under perturbations}

\subsection{Further functional setting}\


For any $\sigma>0$, we define the diagonal operator $T^{\sigma}$ on the space $\mathcal{S}$ by
\begin{equation}\label{T^sigma}
	T^{\sigma}=\mathbf{diag}
	\{ (\ln\, \langle n \rangle)^{\sigma}
	:n\in\mathbb{Z}^2\},
\end{equation}
where $\langle n \rangle=\max\{e,|n|\}$.

\begin{defn}\label{norm2}
	Let $a\ge 0$ and $\sigma>0$. We say that a linear operator $A$ on $\mathcal{S}$ belongs to the operator space $\mathcal{B}_{a,\sigma}$, if
	\begin{equation}
		\|A\|_{a,\sigma} := \|T^\sigma AT^{-\sigma}\|_a + \|T^{-\sigma}AT^\sigma\|_a + \|A\|_a < +\infty,
	\end{equation}
	where $\|\cdot\|_a$ is the norm defined in \eqref{norm1}.
	Suppose the operator $A=A(\theta)$ is Lipschitz continuous with respect to the parameter $\theta\in\mathcal{O}$. For any $\alpha>0$, we say that $A(\theta) \in \mathcal{B}_{a,\sigma}^\alpha$ if
		\begin{equation}
		\|A\|_{a,\sigma}^{\alpha} := \|T^\sigma AT^{-\sigma}\|^{\alpha}_a + \|T^{-\sigma}AT^\sigma\|^{\alpha}_a + \|A\|^{\alpha}_a < +\infty,
	\end{equation}
	where $\|\cdot\|_a^\alpha$ denotes the Lipschitz norm defined in \eqref{norm_lip}.
\end{defn}


\begin{lem}\label{norm_shift}
	 Let $0<\delta<a$ with $\delta<1/2$ and $\sigma>1$. Suppose that the linear operator $A$ on $S$ belongs to the space $\mathcal{B}_{a}^\alpha$. Then $A$ also belongs to the space $\mathcal{B}_{a-\delta, \sigma}^\alpha$, and satisfies
	 \begin{equation*}
	 	\|A\|_{a-\delta, \sigma}^\alpha \le 
		C(\sigma,\delta)
	 	 \|A\|_{a}^\alpha,
	 \end{equation*}
	 where $C(\sigma,\delta)=  \left( 3+\sigma\left( \frac{2\sigma}{e\delta}\right) ^{\sigma}  \right)$.
\end{lem}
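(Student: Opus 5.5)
The estimate reduces to an entrywise bound. For $i,j\in\mathbb{Z}^2$ we have
\[
(T^{\sigma}AT^{-\sigma})_{i,j}=\Bigl(\frac{\ln\langle i\rangle}{\ln\langle j\rangle}\Bigr)^{\sigma}A_{i,j},
\]
and the entries of $T^{-\sigma}AT^{\sigma}$ carry the reciprocal ratio. So the core of the proof is a pointwise bound on the weight
\[
\Bigl(\frac{\ln\langle i\rangle}{\ln\langle j\rangle}\Bigr)^{\pm\sigma}\le C'(\sigma,\delta)\,e^{\delta|i-j|}.
\]
Given this, for each $n$ we get $\sup_{i-j=n}|(T^{\sigma}AT^{-\sigma})_{i,j}|\le C' e^{\delta|n|}\sup_{i-j=n}|A_{i,j}|$. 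Multiplying by $e^{(a-\delta)|n|}$ and summing gives $\|T^{\sigma}AT^{-\sigma}\|_{a-\delta}\le C'\|A\|_a$, and the same holds for $T^{-\sigma}AT^{\sigma}$. The remaining term is trivial: $\|A\|_{a-\delta}\le\|A\|_a$.

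The Lipschitz part follows the same way. The conjugation by $T^{\pm\sigma}$ is independent of $\theta$, so $\bigtriangleup(T^{\sigma}AT^{-\sigma})=T^{\sigma}(\bigtriangleup A)T^{-\sigma}$, and the same entrywise bound applies to the difference quotient. Summing the three pieces gives $\|A\|^\alpha_{a-\delta,\sigma}\le(1+2C')\|A\|^\alpha_a$. To match the stated constant it suffices to show $C'\le 1+\tfrac{\sigma}{2}(2\sigma/(e\delta))^\sigma$, or more crudely that $2C'+1\le 3+\sigma(2\sigma/(e\delta))^\sigma$.

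The main step is the weight bound itself. By symmetry, assume $\langle i\rangle\ge\langle j\rangle$ (the other ratio is then $\le1$). Since $|i|\le|j|+|i-j|$ and $\langle\cdot\rangle\ge e$, we have $\langle i\rangle\le\langle j\rangle+|i-j|\le\langle j\rangle(1+|i-j|)$. Setting $x=\ln\langle j\rangle\ge1$ and $r=|i-j|$, this gives
\[
\frac{\ln\langle i\rangle}{\ln\langle j\rangle}\le1+\frac{\ln(1+r)}{x}\le1+\ln(1+r).
\]
So we need $(1+\ln(1+r))^\sigma\le C' e^{\delta r}$. I would avoid the convexity inequality $(1+y)^\sigma\le 2^{\sigma-1}(1+y^\sigma)$, since its factor $2^{\sigma-1}$ does not match the target constant.

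Instead I would get the sharp form from a calculus maximization. For $y\ge0$ we have $\sup_{r\ge 0} y^\sigma e^{-\delta r}$ type bounds: writing $s=\ln(1+r)\le r$, one has $s^\sigma e^{-\delta r}\le s^\sigma e^{-\delta s}\le(\sigma/(e\delta))^\sigma$. Handling the "$1+$" then costs a factor $2^\sigma$, which produces $(2\sigma/(e\delta))^\sigma$. One way to organize this is to split into the cases $\ln(1+r)\le1$ and $\ln(1+r)>1$. Alternatively, a mean-value argument, $(1+y)^\sigma-1\le\sigma y(1+y)^{\sigma-1}$, would explain the extra factor of $\sigma$ in the stated constant. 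The hypotheses $\delta<1/2$ and $\sigma>1$ serve to absorb lower-order terms into $\sigma(2\sigma/(e\delta))^\sigma$. This step is the main obstacle, and I expect to use the mean-value route: bound $\sigma y(1+y)^{\sigma-1}e^{-\delta r}$ with $y=\ln(1+r)\le r$ by $\sigma\sup_{y\ge0}(1+y)^\sigma e^{-\delta y}\le\sigma e^{\delta}(\sigma/(e\delta))^\sigma$, and then use $e^\delta<2^\sigma$.
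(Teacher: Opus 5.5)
Your structure matches the paper's proof. You bound the weight $(\ln\langle i\rangle/\ln\langle j\rangle)^{\pm\sigma}$ entrywise using the mean value theorem for $t\mapsto t^{\sigma}$ and the calculus inequality $x^{\sigma}e^{-\delta x}\le(\sigma/(e\delta))^{\sigma}$. You handle the Lipschitz part by the $\theta$-independence of $T^{\pm\sigma}$. Your reduction to $1+\ln(1+r)$ with $r=|i-j|$ is also fine.

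The problem is the last step, which does not give the stated constant. Your chain yields
\[
C'\le 1+\sigma e^{\delta}\Bigl(\frac{\sigma}{e\delta}\Bigr)^{\sigma},
\]
but you correctly noted that you need $C'\le 1+\frac{\sigma}{2}\bigl(\frac{2\sigma}{e\delta}\bigr)^{\sigma}=1+\sigma 2^{\sigma-1}\bigl(\frac{\sigma}{e\delta}\bigr)^{\sigma}$. That requires $e^{\delta}\le 2^{\sigma-1}$, i.e.\ $\sigma\ge 1+\delta/\ln 2$. This fails for $\sigma$ close to $1$, e.g.\ $\sigma=1.1$, $\delta=0.4$. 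Using only $e^{\delta}<2^{\sigma}$, as you propose, gives $3+2\sigma(2\sigma/(e\delta))^{\sigma}$, which is twice the stated main term. The loss comes from the substitution $z=1+y$ in the supremum, which costs $e^{\delta}$, applied after the crude step $y(1+y)^{\sigma-1}\le(1+y)^{\sigma}$.

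The repair is to use that $r$ is a nonnegative integer. For $r=0$ the weight is $1$. For $r\ge 1$ you have $y=\ln(1+r)\le r$ and $1+y\le 1+r\le 2r$, so
\[
\sigma y(1+y)^{\sigma-1}e^{-\delta r}\le \sigma 2^{\sigma-1}r^{\sigma}e^{-\delta r}\le \sigma 2^{\sigma-1}\Bigl(\frac{\sigma}{e\delta}\Bigr)^{\sigma}=\frac{\sigma}{2}\Bigl(\frac{2\sigma}{e\delta}\Bigr)^{\sigma}.
\]
Together with $e^{-\delta r}\le 1$, this gives $C'\le 1+\frac{\sigma}{2}(2\sigma/(e\delta))^{\sigma}$. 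Summing the three pieces then yields exactly $3+\sigma(2\sigma/(e\delta))^{\sigma}$.

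This is the paper's route. It proves $|(T^{\sigma}AT^{-\sigma})_{i,j}|\le(1+2^{\sigma-1}\sigma|i-j|^{\sigma})|A_{i,j}|$ and then applies the calculus inequality. So the factor $2^{\sigma-1}$ is not the obstacle you feared. Attached only to the $r^{\sigma}$ term, it is exactly where the $2^{\sigma}$ in the target constant comes from. The hypothesis $\delta<1/2$ is not needed for this.
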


\begin{proof}
	To prove that $A \in \mathcal{B}_{a-\delta, \sigma}^\alpha$, we need to estimate the three terms in the definition of the norm $\|\cdot\|_{a, \sigma}^\alpha$: $\|T^\sigma A T^{-\sigma}\|_a^\alpha$, $\|T^{-\sigma} A T^\sigma\|_a^\alpha$, and $\|A\|_a^\alpha$. Since the estimates for the first two terms are symmetric and the third is trivial, we focus on the term $\|T^\sigma A T^{-\sigma}\|_a^\alpha$.
	
	Consider the matrix representation of $T^\sigma A T^{-\sigma}$. According to the definition \eqref{T^sigma}, the matrix entries are given by
	\begin{equation}\label{TAT}
		(T^\sigma A T^{-\sigma})_{i,j} = (\ln \langle i \rangle)^\sigma A_{i,j} (\ln \langle j \rangle)^{-\sigma}.
	\end{equation}
	We rewrite the ratio of the weights as follows:
	\begin{equation}\label{ratio}
		 \left| \frac{(\ln \langle i \rangle)^\sigma}{(\ln \langle j \rangle)^\sigma} \right| = \left| 1 + \frac{(\ln \langle i \rangle)^\sigma - (\ln \langle j \rangle)^\sigma}{(\ln \langle j \rangle)^\sigma} \right|.
	\end{equation}

	Using the Mean Value Theorem for the function $f(x) = x^\sigma$ with $x>0$,
	we have 
	$$|x^\sigma - y^\sigma| \le \sigma \cdot \max\{x^{\sigma-1}, y^{\sigma-1}\}\cdot |x - y|.$$ 
	Setting $x = \ln \langle i \rangle$ and $y = \ln \langle j \rangle$, and noting that for $\langle i \rangle, \langle j \rangle \ge e\ge 2$, we have $\ln \langle i \rangle \le \ln \langle i-j \rangle + \ln \langle j \rangle$, it follows that
	\begin{equation}
		\begin{split}
		\left| \frac{(\ln \langle i \rangle)^\sigma - (\ln \langle j \rangle)^\sigma}{(\ln \langle j \rangle)^\sigma} \right| 
		&\le \sigma \left( \frac{\ln \langle i-j \rangle + \ln \langle j \rangle}{\ln \langle j \rangle} \right)^{\sigma-1} \frac{\left| \ln \langle i \rangle- \ln \langle j \rangle\right| }{\ln \langle j \rangle} \nonumber\\
		&\le \sigma \left( \frac{| i-j | + \ln \langle j \rangle}{\ln \langle j \rangle} \right)^{\sigma-1} \frac{| i-j|}{\ln \langle j \rangle}\\
		&\le 2^{\sigma-1}\sigma|i-j|^{\sigma}.
		\end{split}
	\end{equation}
	Therefore, combining with \eqref{ratio}, one has for each entry,
	$$|(T^\sigma A T^{-\sigma})_{i,j}| 
	\le \left( 1 + 2^{\sigma-1}\sigma|i-j|^{\sigma}\right) |A_{i,j}|.$$
	By the definition \eqref{norm1}, together with the elementary inequality 
	\begin{equation} \label{eq:calculus_ineq}
		e^{-\delta|n|}|n|^{\sigma} \leq \left(\frac{\sigma}{e\delta}\right)^{\sigma}, \quad \forall\, \sigma > \delta > 0, \ n \in \mathbb{Z}^2,
	\end{equation}
	it follows that
	\begin{equation}
		\begin{split}
		\|T^\sigma A T^{-\sigma}\|_{a-\delta} &= \sum_{n \in \mathbb{Z}^2} e^{(a-\delta) | n |} \sup_{i-j=n} |(T^\sigma A T^{-\sigma})_{i,j}|\\
		&\le \sum_{n \in \mathbb{Z}^2} e^{(a-\delta) | n |} (1 + 2^{\sigma-1}\sigma|n|^{\sigma}) \sup_{i-j=n} |A_{i,j}|\nonumber\\
	    & \leq  \left( 1+ 	\sigma\left( \frac{2\sigma}{e\delta}\right) ^{\sigma}   \right)  \|A\|_{a}.
		\end{split}
		\end{equation}

	The Lipschitz part of the norm $\|\cdot\|_a^\alpha$ defined in \eqref{norm_lip} follows the same algebraic logic since the operator $T^\sigma$ is independent of the parameter $\theta$. Summing the terms for $\|T^\sigma A T^{-\sigma}\|_a^\alpha$, $\|T^{-\sigma} A T^\sigma\|_a^\alpha$, and $\|A\|_a^\alpha$, we conclude that
	$$\|A\|_{a-\delta, \sigma}^\alpha \le \left( 3+\sigma\left( \frac{2\sigma}{e\delta}\right) ^{\sigma}  \right)  \|A\|_{a}^\alpha,
     $$
	and finish the proof of the lemma.
\end{proof}

\begin{defn}\label{norm3}
	For $a\ge 0$ and $\sigma>0$, we say that a linear operator $A$ on $\mathcal{S}$ belongs to the space $\mathcal{B}_{a}^{\sigma}$ if
	\begin{equation}
	\left\|  A\right\| _{a}^{\sigma} :=
	\|T^{\sigma}A\|_a  + \|AT^{\sigma}\|_a +
	\|T^\sigma AT^{-\sigma}\|_a + \|T^{-\sigma}AT^\sigma\|_a < +\infty,
    \end{equation}
    where $\|\cdot\|_a$ is the norm defined in \eqref{norm1}.
  	Assume that the operator $A$ is Lipschitz continuous with respect to the parameter $\theta\in \mathcal{O}$. For any $\alpha>0$, we say that $A(\theta)\in\mathcal{B}^{\sigma,\alpha}_{a}$ if
  	\begin{equation}\label{Lip1}
  		\|A\|_{a}^{\sigma,\alpha} :=
  		\|T^{\sigma} A \|^{\alpha}_a +	
  		\|A T^{\sigma} \|^{\alpha}_a +
  		\|T^\sigma AT^{-\sigma}\|^{\alpha}_a + \|T^{-\sigma}AT^\sigma\|^{\alpha}_a  < +\infty,
  	\end{equation}
  		where $\|\cdot\|_a^\alpha$ denotes the Lipschitz norm defined in \eqref{norm_lip}.
\end{defn}

\begin{lem} \label{260704-1}
	Let $a\ge 0$ and $\sigma>1$. Suppose that the linear operator $A\in \mathcal{B}_{a}^{\sigma,\alpha}$. Then $A\in \mathcal{B}_{a}^\alpha$ satisfies
	\begin{equation*}
		\|A\|_{a}^\alpha\le 
		\|A\|_{a}^{\sigma,\alpha}.
	\end{equation*}
\end{lem}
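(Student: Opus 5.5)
Since $\|A\|_a^{\sigma,\alpha}$ is a sum of four nonnegative terms, the plan is to show that $\|A\|_a^\alpha$ is bounded by one of them, namely $\|T^\sigma A\|_a^\alpha$. The only tool needed is the observation that the diagonal weights of $T^\sigma$ are all at least $1$. Indeed, by \eqref{T^sigma} we have $\langle n\rangle=\max\{e,|n|\}\ge e$, so
\[
(\ln\langle n\rangle)^{\sigma}\ge 1 \qquad \text{for every } n\in\mathbb{Z}^2 \text{ and every } \sigma>0 .
\]
The hypothesis $\sigma>1$ is not needed for this step.

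\textbf{Step 1: entrywise comparison.} For all $i,j\in\mathbb{Z}^2$ we have
\[
(T^\sigma A)_{i,j}=(\ln\langle i\rangle)^\sigma A_{i,j},
\qquad\text{hence}\qquad
|A_{i,j}|\le |(T^\sigma A)_{i,j}| .
\]
Taking $\sup_{i-j=n}$, multiplying by $e^{a|n|}$ and summing over $n$ in \eqref{norm1} gives $\|A\|_a\le \|T^\sigma A\|_a$ for each fixed $\theta$.

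\textbf{Step 2: the Lipschitz part.} Because $T^\sigma$ does not depend on $\theta$, we have $\bigtriangleup(T^\sigma A)=T^\sigma\,\bigtriangleup A$. Applying the entrywise comparison of Step 1 to $\bigtriangleup A$ gives $\|\bigtriangleup A\|_a\le\|\bigtriangleup(T^\sigma A)\|_a$. Dividing by $|\bigtriangleup\theta|$, multiplying by $\alpha$, and taking suprema over $\theta$ and over pairs $\theta_1\neq\theta_2$ in \eqref{norm_lip} yields
\[
\|A\|_a^\alpha\le \|T^\sigma A\|_a^\alpha .
\]

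\textbf{Step 3: conclusion.} The remaining three terms of \eqref{Lip1} are nonnegative, so
\[
\|A\|_a^\alpha\le \|T^\sigma A\|_a^\alpha\le \|A\|_a^{\sigma,\alpha}<\infty ,
\]
and therefore $A\in\mathcal{B}_a^\alpha$. The same argument would work using $AT^\sigma$ in place of $T^\sigma A$.

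There is no real obstacle. The only point that needs care is that the lower bound $(\ln\langle n\rangle)^\sigma\ge1$ holds for every $n$, which is exactly why $\langle n\rangle$ is defined as $\max\{e,|n|\}$.
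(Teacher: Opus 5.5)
Your proof is correct, but it bounds $\|A\|_a^\alpha$ by a different term of the norm than the paper does. The paper controls $\|A\|_a$ by the two conjugated terms, $\|A\|_a\le\|T^\sigma AT^{-\sigma}\|_a+\|T^{-\sigma}AT^{\sigma}\|_a$, using the entrywise formula \eqref{TAT}. Setting $r_{i,j}=(\ln\langle i\rangle)^\sigma/(\ln\langle j\rangle)^\sigma$, the two conjugated entries are $r_{i,j}A_{i,j}$ and $r_{i,j}^{-1}A_{i,j}$, and $\max\{r_{i,j},r_{i,j}^{-1}\}\ge 1$. The Lipschitz part then follows exactly as in your Step 2, because $T^\sigma$ does not depend on $\theta$.

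You instead use the one-sided term $T^\sigma A$ together with the pointwise lower bound $(\ln\langle n\rangle)^\sigma\ge 1$. This gives a sharper single-term bound, $\|A\|_a^\alpha\le\|T^\sigma A\|_a^\alpha$ (or the analogue with $AT^\sigma$), and it is the more transparent comparison. It does, however, rely on the normalization $\langle n\rangle=\max\{e,|n|\}\ge e$.

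The paper's reciprocal-pair argument does not depend on that normalization; it works for any positive diagonal weight. It also shows that the conjugation terms alone control $\|A\|_a$, and these are the same terms that appear in the $\mathcal{B}_{a,\sigma}$-norm. You are right that neither argument uses the hypothesis $\sigma>1$.
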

\begin{proof}
	By \eqref{TAT} and \eqref{ratio}, the base norm of $A$ satisfies
	\begin{equation*}
	\|A\|_{a}\le 	\|T^\sigma A T^{-\sigma}\|_{a} +
	\|T^{-\sigma}AT^\sigma  \|_{a} 
\end{equation*}
	Since the weight operator $T^\sigma$ is independent of the parameter $\theta$, the above estimate extends to the Lipschitz norm via definitions \eqref{norm_lip} and \eqref{Lip1}
	 	\begin{equation}
	 	\|A\|_{a}^\alpha \le \|T^\sigma A T^{-\sigma}\|^\alpha_{a} +
	 	\|T^{-\sigma}AT^\sigma  \|^\alpha_{a} \le \|A\|_{a}^{\sigma,\alpha}.
	 \end{equation}
	 This completes the proof.
	\end{proof}

\begin{lem}\label{est_trans}
	Let $a\geq0$ and $\sigma>0$. Assume that $A\in\mathcal{B}_{a,\sigma}$ and $B\in\mathcal{B}^{\sigma}_a$. Then the following assertions hold:
		\begin{enumerate}
		\item For any $k\ge 1$, the $k$-th commutator $\mathbf{ad}^k_A(B)\in \mathcal{B}^{\sigma}_a$ satisfies
		\begin{equation}\label{k_commu}
		\left\| \mathbf{ad}^k_A(B) \right\|_a^{\sigma}
         \leq \left( 2 \left\| A \right\|_{a,\sigma}\right)^k
         \left\| B \right\|_a^{\sigma}.			
		\end{equation}
		\item The operator $e^A B e^{-A} $ belongs to $\mathcal{B}^{\sigma}_a$ with the quantitative bound
		\[
	 	\left\|  e^A B e^{-A}\right\|_a^{\sigma}
	 	\leq  e^{2\left\| A \right\|_{a,\sigma}} 	\left\| B \right\|_a^{\sigma} .
		\]
	\end{enumerate}
	Analogous assertions hold if $A\in\mathcal{B}^{\alpha}_{a,\sigma}$ and $B\in\mathcal{B}^{\sigma,\alpha}_a$ for any $\alpha>0$.
	\begin{proof}		
		Assertion (1) follows from a standard induction using Lemma \ref{lem8.28.1} and Definitions \ref{norm2} and \ref{norm3}.
	 Assertion (2) then follows by applying the estimate \eqref{k_commu} to 
	 \eqref{Lie}
	  and summing the resulting exponential series. 
	\end{proof}
	
\end{lem}

\subsection{Eliminate the Laplacian}\

Consider the perturbed operator $\varepsilon V$ of the two-dimensional lattice Schr\"odinger operator $H_{\omega}$. Let
\begin{equation}
	\mathcal{H}_{\theta,\varepsilon}=H_{\omega}+\varepsilon V:
	\mathcal{S}	\rightarrow \mathcal{S},
\end{equation}
where $H_{\omega}=\Delta+D$ is defined in \eqref{25.8.18.1}
with
$$\omega=\omega(\theta):=(\cos\theta, \sin\theta),$$
 $\varepsilon>0$ is small, and $V$ is defined by
\begin{equation}
	( Vu)(n)= v(n)u(n),\quad u=\{u(n)\}_{n\in\mathbb{Z}^2}\in
	\mathcal{S}.
\end{equation}
Meanwhile, the matrix representation for $V$ is given by
\begin{equation}
	V = \mathbf{diag}\{v(n) : n \in \mathbb{Z}^2\},
\end{equation}
while those for $\Delta$ and $D$ are given by \eqref{25.8.31.5} and \eqref{D_diag}, respectively. Consequently, $D$ and $V$ are diagonal operators, whereas $\Delta$ is a T\"{o}plitz operator on $\mathcal{S}$.

%


\begin{lem}\label{diag_Top1}
	Assume that $0 < \delta < a$, $0 < \mu < 1$, and $0 < \gamma \ll 1$. For any $\theta \in \text{DC}_{\gamma,\tau}$, there exists an operator $W \in \mathcal{B}_{a-\delta}$ such that 
	\begin{equation}\label{ho1}[W,D] + \Delta = 0.
		\end{equation} Moreover, $W$ is  T\"{o}plitz and skew-adjoint (i.e., $W^*=-W$), and 
	\begin{equation} \label{est_solu1}
		\|W\|_{a-\delta}^{\gamma} \le \frac{3}{\gamma} \left( \frac{2\tau+1}{e\delta} \right)^{2\tau+1} \|\Delta\|_{a}.
	\end{equation}
	%
\end{lem}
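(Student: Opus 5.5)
The plan is to solve the homological equation \eqref{ho1} entrywise, as in the proof of Cases II and III. Since $D$ is diagonal with $D(j)-D(i)=\omega\cdot(j-i)$, equation \eqref{ho1} reads $W_{i,j}\,\omega\cdot(j-i)+\Delta_{i,j}=0$ for $i\neq j$. I will set
\[
W_{i,j}=\frac{\Delta_{i,j}}{\omega\cdot(i-j)}\quad (i\neq j),\qquad W_{i,i}=0.
\]
This is well defined for every $\theta\in\mathrm{DC}_{\gamma,\tau}$, because $\omega\cdot n\neq 0$ for all $n\neq 0$. The diagonal equation holds automatically since $\Delta_{i,i}=0$.

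\textbf{Structural properties.} $W_{i,j}$ depends only on $i-j$, so $W$ is T\"oplitz. Since $\Delta$ is real symmetric and $\omega\cdot(i-j)$ is odd under $i\leftrightarrow j$, we get $W_{j,i}=-W_{i,j}$, and $W$ is real; hence $W^*=-W$.

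\textbf{Sup norm.} I will use the Diophantine bound $|\omega\cdot n|\ge\gamma|n|^{-\tau}$ to get $|W_{i,j}|\le\gamma^{-1}|n|^{\tau}|\Delta_{i,j}|$ with $n=i-j$. The elementary inequality \eqref{eq:calculus_ineq} then absorbs the polynomial weight into the loss $e^{-\delta|n|}$, giving
\[
\|W\|_{a-\delta}\le \gamma^{-1}\Big(\frac{\tau}{e\delta}\Big)^{\tau}\|\Delta\|_a .
\]
This is comfortably within \eqref{est_solu1}. For nearest-neighbour $\Delta$ one even has $|n|=1$, but the general argument is what will be reused later.

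\textbf{Lipschitz part (the main point).} With $\alpha=\gamma$, for $\theta_1,\theta_2\in\mathrm{DC}_{\gamma,\tau}$ I will write
\[
\Delta W_{i,j}=\Delta_{i,j}\,\frac{\omega(\theta_2)\cdot n-\omega(\theta_1)\cdot n}{(\omega(\theta_1)\cdot n)(\omega(\theta_2)\cdot n)} .
\]
The numerator is bounded by $|n|\,|\Delta\theta|$, since $|\partial_\theta\omega|=1$. The denominator is bounded below by $\gamma^2|n|^{-2\tau}$. Hence
\[
\gamma\,\frac{|\Delta W_{i,j}|}{|\Delta\theta|}\le \gamma^{-1}|n|^{2\tau+1}|\Delta_{i,j}| .
\]
Applying \eqref{eq:calculus_ineq} with exponent $2\tau+1$ gives the factor $\big(\frac{2\tau+1}{e\delta}\big)^{2\tau+1}$. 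This Lipschitz term is the source of the stated constant.

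\textbf{Assembly.} Adding the two contributions, and using that $(\tau/e\delta)^{\tau}\le((2\tau+1)/e\delta)^{2\tau+1}$ once $\delta$ is small (the case $\delta<1/2$), yields \eqref{est_solu1} with a constant of the form $3/\gamma$. The only delicate point is this Lipschitz step, which needs the doubled small-divisor loss $|n|^{2\tau+1}$. Its validity relies on both $\theta_1$ and $\theta_2$ lying in $\mathrm{DC}_{\gamma,\tau}$, so the supremum in \eqref{norm_lip} should be taken over that set.
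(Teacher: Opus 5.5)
Your proposal is correct and follows the paper's proof essentially step for step. It uses the same entrywise solution $W_{i,j}=\Delta_{i,j}/(\omega\cdot(i-j))$, the Diophantine bound for the sup part, and the doubled small divisor $|n|^{2\tau+1}/\gamma^{2}$ for the Lipschitz part; your factor $1$ from $|\partial_\theta\omega|=1$ is slightly better than the paper's factor $2$. You are also right that $\theta_1,\theta_2$ must both lie in $\mathrm{DC}_{\gamma,\tau}$, which the paper uses implicitly.

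The one adjustment is in the assembly step. You compare $(\tau/e\delta)^{\tau}$ with $((2\tau+1)/e\delta)^{2\tau+1}$, which only works for $\delta\le(2\tau+1)/e$. Instead, use $|n|\ge1$ on the support of $W$ to bound $|n|^{\tau}\le|n|^{2\tau+1}$ entrywise, and then apply \eqref{eq:calculus_ineq} once, as the paper does. This gives \eqref{est_solu1} for every $0<\delta<a$, as the lemma states.
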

\begin{proof}
 Recalling \eqref{Wij}, for $i \neq j$, we have 
 \begin{equation*}
 	|W_{i,j}| = \left| \frac{\Delta_{i,j}}{\omega \cdot (i-j)} \right| \le \frac{|i-j|^{\tau}}{\gamma} |\Delta_{i,j}|,
 \end{equation*}
 where we have used the fact that $\theta \in \text{DC}_{\gamma,\tau}$ as given in Definition \eqref{Diophantine}. 
 Furthermore, $W$ is a T\"{o}plitz, skew-adjoint operator, which inherits from the structural properties that $D$ is diagonal and self-adjoint, while $\Delta$ is T\"{o}plitz.
Moreover, from \eqref{norm_lip}, one has
\begin{align*}
	\left|\bigtriangleup_{\theta} W_{i,j} \right|
	&=\left| \frac{\Delta_{i,j} }{\omega(\theta_1)\cdot(i-j) }-	
	\frac{\Delta_{i,j}}{\omega(\theta_2)\cdot(i-j)}\right|
	\nonumber\\
	&\le
	\frac{ \left| \Delta_{i,j} \right| \hspace{2pt}
		\left|(i-j)\cdot \bigtriangleup_{\theta} \omega   \right|  }{\left| \omega(\theta_1)\cdot(i-j) \right| \hspace{2pt} \left| \omega(\theta_2)\cdot(i-j)\right| }  \nonumber\\
	& \le \frac{ 2 |i-j|^{2\tau+1}}{\gamma^2}
	\left|  \Delta_{i,j}  \right| \hspace{2pt}
	\left|  \bigtriangleup \theta \right|,
\end{align*}
 where we have used the fact that
   \begin{equation}\label{big_omega}
  	\left| \bigtriangleup_{\theta} \omega \right|
  	=  	\left|\cos \theta_1-\cos \theta_2  \right|
  	+ \left|\sin \theta_1-\sin \theta_2  \right|
  	\le 2 |\theta_1-\theta_2|\cdot
  	=2\left| \bigtriangleup \theta\right|.
  \end{equation}
  
Consequently, by virtue of the two preceding estimates, we have
\begin{align*}\label{Delta_W0}
	&\quad \|W\|_{a-\delta}+\gamma \frac{\left\| \bigtriangleup_{\theta} W\right\| _{a-\delta}}{\left| \bigtriangleup \theta\right|}\\
	&\leq \sum_{n \in \mathbb{Z}^2}e^{(a-\delta)|n|}\sup_{i-j=n}\left( \frac{|n|^{\tau}}{\gamma} + \frac{2|n|^{2\tau+1}}{\gamma}\right)
	|\Delta_{i,j}| \\
	&\leq \frac{3}{\gamma} \sum_{n \in \mathbb{Z}^2}e^{(a-\delta)|n|}\sup_{i-j=n}  {|n|^{2\tau+1}} 
 |\Delta_{i,j}| \\
	&\leq  \frac{3}{\gamma}
	 \left( \frac{2\tau+1 }{ e\delta}\right)^{2\tau+1}
	\|\Delta\|_{a},
\end{align*}
where the last inequality follows from \eqref{eq:calculus_ineq}.
\end{proof}

\begin{lem}\label{P_new}
	Let $0 < \delta < a/2$, $0 < \mu < 1$, $0 < \gamma \ll 1$, and $\sigma > 1$. For any $\theta \in \text{DC}_{\gamma,\tau}$, there exists a T\"{o}plitz operator $W \in \mathcal{B}_{a-\delta,\gamma}$ such that
	\begin{equation}\label{H_new}
		e^{W}\left( D+\Delta+\varepsilon V \right) e^{-W}= D_{\text{new}}+P_{\text{new}},
	\end{equation}
	where
	\begin{equation}\label{DP_new}
		D_{\text{new}} =D\quad\text{and}\quad
		P_{\text{new}}= \varepsilon e^{W}  V  e^{-W}.
	\end{equation}
	In addition, $e^{W}$ is a unitary operator, and $P_{\text{new}}\in\mathcal{B}^{\sigma,\gamma}_{a-2\delta}$ is a self-adjoint operator satisfying
	\begin{equation}\label{est_Pnew}
		\left\| P_{\text{new}} \right\|_{a-2\delta}^{\sigma,\gamma} \le \varepsilon  \exp\left(   \frac{C_{\sigma,\delta,\tau,a}}{\gamma}\right).
	\end{equation}
\end{lem}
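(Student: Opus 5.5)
Take $W$ to be the operator produced by Lemma \ref{diag_Top1}, i.e. the Töplitz solution of the homological equation $[W,D]+\Delta=0$, explicitly given by \eqref{Wij}. Then show four things in turn: the conjugation identity \eqref{H_new}, unitarity of $e^{W}$, self-adjointness of $P_{\text{new}}$, and the weighted norm bound \eqref{est_Pnew}.

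For the identity, expand $e^{W}(D+\Delta+\varepsilon V)e^{-W}$ via the Lie series \eqref{Lie}. The part $e^{W}(D+\Delta)e^{-W}$ is handled exactly as in the proof of Cases II and III, giving \eqref{commu}. By Lemma \ref{lem25.8.31.1}, $\mathbf{ad}_W(\Delta)=0$, because both $W$ and $\Delta$ are Töplitz. Using $[W,D]=-\Delta$ we get $\mathbf{ad}_W^n(D)=\mathbf{ad}_W^{n-1}(-\Delta)=0$ for $n\ge2$. Hence $e^{W}(D+\Delta)e^{-W}=D$. The remaining term is $\varepsilon e^{W}Ve^{-W}=P_{\text{new}}$.

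Convergence of all series in $\mathcal{B}_{a-\delta}$ follows from Lemma \ref{lem8.28.1} together with $\|W\|_{a-\delta}<\infty$, which holds by \eqref{est_solu1}. For an irrational direction one can also use directly that $W$ has finite range and bounded entries, since $|\omega_1|,|\omega_2|>0$ for $\theta\in\mathrm{DC}_{\gamma,\tau}$. Because $W^*=-W$, $e^{W}$ is unitary with inverse $e^{-W}=(e^{W})^*$. Since $V$ is real diagonal, $P_{\text{new}}^*=\varepsilon (e^{-W})^*Ve^{W*}=\varepsilon e^{W}Ve^{-W}=P_{\text{new}}$.

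For the estimate, apply Lemma \ref{est_trans}(2) in its Lipschitz version with $A=W$ and $B=\varepsilon V$. This gives
\[
\|P_{\text{new}}\|^{\sigma,\gamma}_{a-2\delta}\le e^{2\|W\|^{\gamma}_{a-2\delta,\sigma}}\,\varepsilon\|V\|^{\sigma,\gamma}_{a-2\delta}.
\]
The first factor is controlled by Lemma \ref{norm_shift}, passing from $a-\delta$ to $a-2\delta$, combined with \eqref{est_solu1}:
\[
\|W\|^{\gamma}_{a-2\delta,\sigma}\le C(\sigma,\delta)\,\frac{3}{\gamma}\Big(\frac{2\tau+1}{e\delta}\Big)^{2\tau+1}\|\Delta\|_a .
\]
Here $\|\Delta\|_a=4e^{a}$. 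The exponent is therefore $C_{\sigma,\delta,\tau,a}/\gamma$ up to a factor, which is absorbed into the constant.

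The main obstacle is bounding $\|V\|^{\sigma,\gamma}_{a-2\delta}$. $V$ is diagonal and independent of $\theta$, so its Lipschitz part vanishes, and $T^{\pm\sigma}VT^{\mp\sigma}=V$ has norm $\sup|v|\le1$. The terms $\|T^\sigma V\|$ and $\|VT^\sigma\|$ equal $\sup_n(\ln\langle n\rangle)^{\sigma}|v(n)|\le 1$ by the hypothesis on $v$. I would state this hypothesis for exponent $\sigma$ here, matching the $\sigma$ used in $T^\sigma$. Hence $\|V\|^{\sigma,\gamma}_{a-2\delta}\le4$, and the factor $4$ is absorbed into the exponential by enlarging $C_{\sigma,\delta,\tau,a}$, using $\gamma\ll1$.

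A secondary point is to check that Lemma \ref{est_trans} indeed applies with $A\in\mathcal{B}^{\gamma}_{a-2\delta,\sigma}$ and $B\in\mathcal{B}^{\sigma,\gamma}_{a-2\delta}$, i.e. that the mixed weights $T^{\sigma}(WB)$ can be split as $(T^\sigma WT^{-\sigma})(T^\sigma B)$. This is exactly why the $\mathcal{B}_{a,\sigma}$ norm includes the conjugated terms.
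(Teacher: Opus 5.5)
Your proposal is correct and follows essentially the same route as the paper: you take the T\"oplitz, skew-adjoint $W$ from Lemma \ref{diag_Top1}, use Lemma \ref{lem25.8.31.1} and $[W,D]=-\Delta$ to remove all $\mathbf{ad}_W^n(\Delta)$ and $\mathbf{ad}_W^n(D)$ ($n\ge 2$) terms, and then bound $\varepsilon e^{W}Ve^{-W}$ through Lemma \ref{est_trans}, Lemma \ref{norm_shift} and \eqref{est_solu1}. Your explicit bound $\|V\|^{\sigma,\gamma}_{a-2\delta}\le 4$ and your self-adjointness check for $P_{\text{new}}$ fill in steps the paper leaves implicit; the bound requires the decay exponent of $v$ to be at least the $\sigma$ in $T^{\sigma}$, and the paper's final inequality silently drops the factor $\|V\|^{\sigma}_{a-2\delta}$.
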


\begin{proof}
	
	From Lemma \ref{diag_Top1}, we can expand the conjugation using the Lie series:
%
\begin{align*}
	e^{W}(D+\Delta+\varepsilon V)e^{-W}=&D+[W,D]+\Delta
	+\sum_{n=2}^{\infty}\frac{\mathbf{ad}_W^n(D)}{n!}\\
	&\quad +\sum_{n=1}^{\infty}\frac{\mathbf{ad}_W^n(\Delta)}{n!}+ \varepsilon e^{W}Ve^{-W}.  	\end{align*}
%
Using the relation $[W,D]=-\Delta$ from \eqref{ho1}. Then, by induction, we have
we have $\mathbf{ad}_W^n(D)=\mathbf{ad}_W^{n-1}(\mathbf{ad}_W D) = \mathbf{ad}_W^{n-1}(-\Delta)$. Substituting this into the Hadamard expansion, we obtain:
$$e^{W}(D+\Delta+ \varepsilon V)e^{-W}=D+\sum_{n=1}^{\infty}\mathbf{ad}_W^{n}(\Delta)\big(\frac{1}{n!}-\frac{1}{(n+1)!}\big)
+ \varepsilon e^{W}Ve^{-W}.$$
Since $\Delta$ is a T\"{o}plitz operator and, by Lemma \ref{diag_Top1}, $W$ is also a T\"{o}plitz operator, Lemma \ref{lem25.8.31.1} implies that for any $n\ge 1$:
\begin{equation}
	\mathbf{ad}_W^n(\Delta)=0.
\end{equation}
This leads directly to
\begin{equation}
	e^{W}\left( D+\Delta+ \varepsilon V \right) e^{-W}=
	D+\varepsilon e^{W} V e^{-W}.
\end{equation}

By Lemma \ref{diag_Top1}, $W$ is a skew-adjoint operator, i.e., $W^*=-W$.
It follows that
\begin{equation*}
	e^{W^*}e^{W}=e^{-W}e^{W}=\mbox{Id}\quad \mbox{and} \quad	e^{W}e^{W^*}=e^{W}e^{-W}=\mbox{Id},
\end{equation*}
which implies that $e^{W}$ is a unitary operator.

Furthermore, applying Lemma \ref{est_trans}, we have the following estimate:
\begin{align*}
	&\left\|\varepsilon e^{W}Ve^{-W}\right\|_{a-2\delta}^{\sigma,\gamma}\\
	& 	\le
	\varepsilon\exp\big(2\left\|W\right\|_{a-2\delta,\sigma}^{\gamma}\big)\left\| V \right\|_{a-2\delta}^{\sigma,\gamma}
	\\
	& \leq 	\varepsilon 	\exp\Big(2\Big(3+2^{\sigma}\frac{\sigma^{\sigma+1}}{(e\delta)^{\sigma}}\Big)\left\|W\right\|_{a-\delta}^{\gamma}\Big)\left\| V \right\|_{a-2\delta}^{\sigma}
	\quad (\mbox{by Lemma }\ref{norm_shift}) \\
	&  \leq \varepsilon  \exp\Big(  \frac{2}{\gamma}\Big(3+2^{\sigma}\frac{\sigma^{\sigma+1}}
	{(e\delta)^{\sigma}}\Big)
	\Big(\frac{4(2\tau+1)^{2\tau+1}}{(e\delta)^{\tau}} \Big)\|\Delta\|_{a}\Big) \left\| V \right\|_{a-2\delta}^{\sigma}
	\quad (\mbox{by }\eqref{est_solu1}) \\
	& \leq \varepsilon \exp\Big(  \frac{16}{\gamma}\Big(3+2^{\sigma}\frac{\sigma^{\sigma+1}}{(e\delta)^{\sigma}}\Big)
	\Big(\frac{(2\tau+1)^{2\tau+1}}{e^{\tau-a}\delta^{\tau}} \Big)\Big).
\end{align*}
\end{proof}

\subsection{KAM diagonalization}\

In this section, we diagonalize the operator $D_{\text{new}}+P_{\text{new}}$ via a KAM iteration scheme. From the preceding subsection, we have
$$
D_{\text{new}}:=\mathbf{diag}\{\omega\cdot n: n\in\mathbb Z^2\}.
$$
Fix $\delta=a/4$ and $0<\gamma\ll 1$. For sufficiently small $\varepsilon=\varepsilon(\gamma,a,\sigma,\tau)>0$, we obtain the estimate
\begin{equation}\label{pn}
	\|P_{\text{new}}\|^{\sigma,\gamma}_{a/2} \le \varepsilon \exp\left(\frac{C_{\sigma,\tau,a}}{\gamma}\right)\le \varepsilon^{1/2}.
\end{equation}
We take the initial operator in the iteration to be
$$
D^0+P^0:=D_{\text{new}}+P_{\text{new}},
$$
where $D^0$ is diagonal and $P^0$ is its  perturbation.

At the $m$-th step, we consider the operator $D^m+P^m$, with $D^m$ diagonal and $P^m$ a small  perturbation. We construct a unitary operator $e^{W^m}$ such that
$$
e^{W^m}(D^m+P^m)e^{-W^m}=D^{m+1}+P^{m+1},
$$
where $D^{m+1}$ is a new diagonal matrix and $P^{m+1}$ is an off-diagonal matrix of much smaller size. For notational simplicity, we suppress the index $m$ and write $D,P,W$ for $D^m,P^m,W^m$, and use the subscript $^+$ to denote the next iterate, e.g., $D^+=D^{m+1}$.

Before giving the details of the KAM iteration, we state two auxiliary lemmas (Lemmas \ref{Solu} and \ref{P_0}) that provide the iteration step.

\subsubsection{Two preliminary lemmas}

\begin{defn}\label{non-res1}
	Let $D_{\theta}=\mathbf{diag}\{D_{\theta}(n):n\in\mathbb{Z}^2\}$ be a diagonal operator with
	$$
	D_{\theta}(n)=\omega\cdot n + d_n(\theta),\quad n\in\mathbb{Z}^2,
	$$
	defined on the set $\mathcal{O}$. We say that $D_{\theta}$ is nonresonant on $\mathcal{O}$ if, for every $\theta\in\mathcal{O}$ and all $n,n'\in\mathbb{Z}^2$ with $0<|n-n'|\le N$, the following estimate holds:
	\begin{equation}\label{non_res1}
		\left| D_{\theta}(n)-D_{\theta}(n')\right| \ge
		\frac{\alpha}{e^{\rho |n-n'|^{\mu}}},
	\end{equation}
	where $0<\alpha,\rho<1$ and $0<\mu<1$ are constants.
\end{defn}

\begin{lem}\label{Solu}
	Fix $0<\alpha,\rho,\mu<1$, $\sigma>1$, and $a>0$.
	Let the diagonal operator $D_{\theta}$ be nonresonant on the set $\mathcal{O}$
	and  $P\in\mathcal{B}_{a}^{\sigma,\alpha}$ be self-adjoint.
	Assume further that there exists a constant $0<C<1$ such that, for any $n\in \mathbb{Z}^2$  and any $\theta_1,\theta_2\in\mathcal{O}$,
	\begin{equation}\label{non_res2}
		\left| \bigtriangleup_{\theta} d_n \right|=\left| d_n(\theta_1)-d_n(\theta_2)\right|
		\le C \left| \theta_1-\theta_2 \right|.
	\end{equation}
	Then, for a fixed integer $N$, there exists a skew-adjoint operator $W\in\mathcal{B}_{a,\sigma}^{\alpha}$ that solves the homological equation
	\begin{equation}\label{homo1}
		[W,D]+\Pi_N P-[P]=0,
	\end{equation}
	and satisfies the estimate
	\begin{equation}\label{est_W}
		\left\|W\right\|_{a,\sigma}^{\alpha}
		\le
		\frac{(2N+2)e^{2\rho N^{\mu}} }{\alpha}
		\left\|P \right\|_{a}^{\sigma,\alpha}.
	\end{equation}
\end{lem}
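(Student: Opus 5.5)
The homological equation \eqref{homo1} can be solved entrywise, after which each of the four weighted norms making up $\|\cdot\|_{a,\sigma}^{\alpha}$ is bounded entry by entry. Because $D$ is diagonal, $([W,D])_{i,j}=W_{i,j}(D(j)-D(i))$. For $0<|i-j|\le N$ we set
\[
W_{i,j}=\frac{P_{i,j}}{D(i)-D(j)},
\]
and we set $W_{i,j}=0$ both on the diagonal and for $|i-j|>N$. Then $[W,D]_{i,j}=-P_{i,j}$ for $0<|i-j|\le N$, which is exactly what \eqref{homo1} requires, since the diagonal of $\Pi_NP$ is cancelled by $[P]$. Skew-adjointness follows from $P_{j,i}=\overline{P_{i,j}}$ and the fact that $D(i)$ is real: $W_{j,i}=\overline{P_{i,j}}/(D(j)-D(i))=-\overline{W_{i,j}}$.

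\textbf{Sup bound.} The nonresonance condition \eqref{non_res1} gives
\[
|W_{i,j}|\le \alpha^{-1}e^{\rho|i-j|^\mu}|P_{i,j}|\le \alpha^{-1}e^{\rho N^\mu}|P_{i,j}|
\]
for $|i-j|\le N$. Every norm in $\mathcal B_{a,\sigma}$ is built from the weights $T^{\pm\sigma}$, which are diagonal. So $(T^{\sigma}WT^{-\sigma})_{i,j}=W_{i,j}\,(\ln\langle i\rangle)^\sigma(\ln\langle j\rangle)^{-\sigma}$ obeys the same entrywise bound in terms of $(T^\sigma PT^{-\sigma})_{i,j}$. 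Summing over $n=i-j$ with the weight $e^{a|n|}$ gives $\|T^{\pm\sigma}WT^{\mp\sigma}\|_a\le \alpha^{-1}e^{\rho N^\mu}\|T^{\pm\sigma}PT^{\mp\sigma}\|_a$. For $\|W\|_a$ we use Lemma \ref{260704-1} (or the same entrywise comparison) to bound $\|P\|_a$ by $\|P\|_a^{\sigma,\alpha}$. Thus the sup parts are controlled by $\alpha^{-1}e^{\rho N^\mu}\|P\|^{\sigma,\alpha}_a$.

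\textbf{Lipschitz bound.} This is the main step. Writing $E_{ij}(\theta)=D_\theta(i)-D_\theta(j)$, we have
\[
\Delta_\theta W_{i,j}=\frac{\Delta_\theta P_{i,j}}{E_{ij}(\theta_1)}-P_{i,j}(\theta_2)\frac{\Delta_\theta E_{ij}}{E_{ij}(\theta_1)E_{ij}(\theta_2)}.
\]
The first term gives $\alpha^{-1}e^{\rho N^\mu}|\Delta_\theta P_{i,j}|$. For the second, \eqref{big_omega} and \eqref{non_res2} give
\[
|\Delta_\theta E_{ij}|\le |(i-j)\cdot\Delta_\theta\omega|+2C|\Delta\theta|\le (2|i-j|+2)|\Delta\theta|\le (2N+2)|\Delta\theta|.
\]
Hence the second term is at most $\alpha^{-2}e^{2\rho N^\mu}(2N+2)|P_{i,j}||\Delta\theta|$. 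Multiplying by the Lipschitz weight $\alpha$ turns this into $\alpha^{-1}(2N+2)e^{2\rho N^\mu}|P_{i,j}|\,|\Delta\theta|$. The same computation applies after conjugation by $T^{\pm\sigma}$, because the weights do not depend on $\theta$. Adding the sup and Lipschitz parts, with their overlapping constants absorbed into the factor $(2N+2)e^{2\rho N^\mu}$ (which dominates $1+\alpha^{-1}e^{\rho N^\mu}$-type terms up to that factor), yields \eqref{est_W}.

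The delicate point is bookkeeping. The estimate for the second Lipschitz term gains exactly one power of $\alpha^{-1}$ after the weight $\alpha$, which is why $\alpha$ appears to the first power in \eqref{est_W}. We must also check that all three norms of $W$ are dominated by the corresponding pieces of the four-term norm $\|P\|^{\sigma,\alpha}_a$ with constant $1$, so that the sum stays within the stated bound.
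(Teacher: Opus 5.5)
Your argument is essentially the paper's proof. It uses the entrywise solution $W_{i,j}=P_{i,j}/(D_\theta(i)-D_\theta(j))$ on $0<|i-j|\le N$, the nonresonance bound for the sup part, the quotient-difference split for the Lipschitz part, and the transfer to the $T^{\pm\sigma}$-conjugates because the weights are $\theta$-independent. Your $2C$ in $|\bigtriangleup_\theta E_{ij}|$ is in fact more accurate than the paper's $C$.

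Only the final bookkeeping needs tightening to reach the exact constant in \eqref{est_W}. First, compare $\|W\|_a^\alpha$ with $\|T^\sigma P\|_a^\alpha$ via $(\ln\langle i\rangle)^\sigma\ge 1$, as the paper does, rather than through Lemma~\ref{260704-1}. That lemma reuses the two conjugated pieces of $\|P\|_a^{\sigma,\alpha}$ already spent on $T^{\pm\sigma}WT^{\mp\sigma}$, which costs a factor $2$. Second, use $|(i-j)\cdot\bigtriangleup_\theta\omega|\le |i-j|\,|\bigtriangleup\theta|$ to get $|\bigtriangleup_\theta E_{ij}|\le (N+2)|\bigtriangleup\theta|$. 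With your $(2N+2)$ the sup-part term $\alpha^{-1}e^{\rho N^\mu}$ cannot be absorbed, whereas $e^{\rho N^\mu}+(N+2)e^{2\rho N^\mu}\le (2N+2)e^{2\rho N^\mu}$ for $N\ge 1$.
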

\begin{proof}
	By the homological equation \eqref{homo1}, one gets
	\begin{equation}
		W_{i,j} (D_{\theta}(j)-D_{\theta}(i))+P_{i,j}=0
	\end{equation}
	for any $i,j\in \mathbb{Z}^2$ with $0<|i-j|\le N$. Let $g_{i,j}(\theta) = D_\theta(i) - D_\theta(j)$, we have the solution $W_{i,j}$ given by
	\begin{align}\label{homo_solu2}
		W_{i,j}=\begin{cases}
			\frac{P_{i,j}}{g_{i,j}(\theta)},\ \ 0<|i-j|\le N,\\
			0,\quad\quad\ \mbox{otherwise}.
		\end{cases}
	\end{align}
	Note that since $P$ is self-adjoint, it follows that $W$ is  skew-adjoint (since $g_{i,j}=-g_{j,i}$ and $P_{i,j} = \overline{P_{j,i}}$).
	
	Next, we will estimate $\left\|W \right\|_{a}^{\alpha}$.
	On one hand, since the frequency $\theta\in\mathcal{O}$
	satisfies the nonresonant condition \eqref{non_res1}, one can obtain
	\begin{equation}
		\left| W_{i,j}\right| = \left|\frac{P_{i,j}}{g_{i,j}(\theta)} \right|
		\le  \frac{e^{\rho |j-i|^{\mu}}}{\alpha}
		\left| P_{i,j} \right| \le
		\frac{e^{\rho N^{\mu}}}{\alpha} \left| P_{i,j} \right|
	\end{equation}
	and
	\begin{equation}\label{W1}
		\left\|W \right\|_{a} \le
		\frac{e^{\rho N^{\mu}}}{\alpha}
		\left\| T^{\sigma}P \right\|_{a}.
	\end{equation}

	On the other hand, for the Lipschitz estimate, one has
	\begin{align*}
		\left|\bigtriangleup_{\theta}W_{i,j} \right|
		&=\left|\frac{P_{i,j}(\theta_{1})}{g_{i,j}(\theta_{1})}-	
		\frac{P_{i,j}(\theta_{2}) }{g_{i,j}(\theta_{2})} \right| \nonumber\\
		&\le \left| P_{i,j}(\theta_{1})\right|
		\left| \frac{ \bigtriangleup_{\theta} g_{i,j} }{g_{i,j}(\theta_{1})g_{i,j}(\theta_{2})} \right| + \left| \frac{1}{g_{i,j}(\theta_{2})}\right|
		\left| \bigtriangleup_{\theta}P_{i,j} \right|.
	\end{align*}
	Applying \eqref{non_res1} and \eqref{non_res2}, and noting that
	$$|\bigtriangleup_{\theta}g_{i,j}|\leq (2|i-j|+C)|\bigtriangleup_{\theta}\theta| \leq (2N+1)|\bigtriangleup_{\theta}\theta|,$$
	we have the element-wise bound:
	
	$$|\bigtriangleup_{\theta}W_{i,j}| \leq \frac{e^{2\rho N^\mu}}{\alpha^2}(2N+1)|P_{i,j}|\ |\bigtriangleup_{\theta} \theta|+\frac{e^{\rho N^\mu}}{\alpha} |\bigtriangleup_{\theta}P_{i,j}|.$$
	Then, applying the definition of the weighted matrix norm $\| \cdot \|_a$ in \eqref{norm1},  one has
	\begin{align}\label{251219_1}
		\left\|\bigtriangleup_{\theta}W\right\|_a\le 	
		\frac{e^{2\rho N^{\mu}}(2N+1)}{\alpha^2}
		\left\| T^{\sigma}P(\theta_{1}) \right\|_a
		\left|  \bigtriangleup_{\theta}\theta\right|
		+ \frac{e^{\rho N^{\mu}}}{\alpha}\left\| \bigtriangleup_{\theta}T^{\sigma}P\right\|_a.
	\end{align}
	
	Therefore, in view of \eqref{W1} and \eqref{251219_1}, and absorbing the lower order terms into the dominant exponential, we obtain
	\begin{align}
		\left\| W \right\|^{\alpha}_a
		&=  \sup_{\theta \in \mathcal{O}}
		\|W\|_a +\alpha \sup_{\theta_1,\theta_2 \in \mathcal{O}
			\atop{\theta_1\neq\theta_2}} 
		\frac{ \left\| \bigtriangleup_{\theta}W\right\| _a}{\left| \bigtriangleup_{\theta}\theta \right| }\nonumber\\
		&\le  \frac{e^{\rho N^{\mu}}+e^{2\rho N^{\mu}}(2N+1) }{\alpha}
		\sup_{\theta \in \mathcal{O}}
		\left\| T^{\sigma}P \right\|_a
		+ e^{\rho N^{\mu}}\sup_{\theta_1, \theta_2 \in \mathcal{O}
			\atop{\theta_1\neq\theta_2}}
		\frac{\left\| \bigtriangleup_{\theta}T^{\sigma}P\right\|_a }{\left| \bigtriangleup_{\theta}\theta \right|}\nonumber\\
		&\le \frac{e^{2\rho N^{\mu}}(2N+2) }{\alpha}	\left\| T^{\sigma}P \right\|_a^{\alpha}.
	\end{align}
	
	The same estimate holds for $ \left\|T^{\sigma} WT^{-\sigma} \right\|^{\alpha}_a $ and
	$ \left\|T^{-\sigma} WT^{\sigma} \right\|^{\alpha}_a$.
	Hence, we finish the proof of this lemma.
\end{proof}

\begin{lem}\label{P_0}
	Fix $0<\alpha,\rho,\mu<1$, $\sigma>1$, and $0<\delta<a$.
	Let the diagonal operator $D_{\theta}$ satisfy the nonresonant assumptions in Definition \ref{non-res1} and the self-adjoint operator $P\in\mathcal{B}_a^{\sigma,\alpha}$.
	Also, assume that 
	\begin{equation}\label{ass2}
		\frac{4(N+1)e^{2\rho N^{\mu}}}{\alpha}\|P\|^{\sigma,\alpha}_a<1.
	\end{equation}
	Then, there exists a skew-adjoint operator $ W\in \mathcal{B}_a^{\sigma,\alpha}$, such that
	\begin{equation}
		e^W (D+P) e^{-W}=D^+ + P^+,
	\end{equation}
	where $D^+=D+[P]$ is the new diagonal operator and $P^+$ is a self-adjoint operator satisfying
	\begin{align}\label{P+2}
		\left\| P^+ \right\|_{a-\delta}^{\sigma,\alpha}\le
		e^{-\delta N} \left\|P \right\|_{a}^{\sigma,\alpha}
		+   \frac{24(N+1)e^{2\rho N^{\mu}}}{\alpha}
		\left(\left\|P \right\|_{a}^{\sigma,\alpha}\right)^2.
	\end{align}
	
\end{lem}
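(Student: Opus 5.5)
We take $W$ to be the solution of the homological equation \eqref{homo1} given by Lemma \ref{Solu}, i.e. $W_{i,j}=P_{i,j}/(D_\theta(i)-D_\theta(j))$ for $0<|i-j|\le N$ and $W_{i,j}=0$ otherwise. By Lemma \ref{Solu}, $W$ is skew-adjoint, so $e^{W}$ is unitary, and
\[
\|W\|_{a,\sigma}^{\alpha}\le \frac{(2N+2)e^{2\rho N^\mu}}{\alpha}\|P\|_a^{\sigma,\alpha}.
\]
Assumption \eqref{ass2} then gives $\|W\|_{a,\sigma}^\alpha<1/2$, which keeps all exponential series below convergent with absolute constants.

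Next we expand $e^W(D+P)e^{-W}$ by the Lie series \eqref{Lie}:
\[
e^W(D+P)e^{-W}=D+[W,D]+P+\sum_{n\ge2}\frac{\mathbf{ad}_W^n(D)}{n!}+\sum_{n\ge1}\frac{\mathbf{ad}_W^n(P)}{n!}.
\]
By \eqref{homo1}, $[W,D]+P=[P]+\Pi_N^{\bot}P$. Also $\mathbf{ad}_W^n(D)=\mathbf{ad}_W^{n-1}([P]-\Pi_NP)$. Setting $D^+=D+[P]$, we obtain
\[
P^+=\Pi_N^\bot P+\sum_{n\ge1}\frac{\mathbf{ad}_W^{n}([P]-\Pi_NP)}{(n+1)!}+\sum_{n\ge1}\frac{\mathbf{ad}_W^n(P)}{n!}.
\]
$P^+$ is self-adjoint because it equals the unitary conjugate of the self-adjoint operator $D+P$, minus the self-adjoint operator $D^+$.

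For the estimate, the first term is handled by the truncation bound (Lemma \ref{lem8.28.1}(5)). It applies to each of the four pieces $T^{\pm\sigma}\cdot T^{\mp\sigma}$, $T^\sigma\cdot$ and $\cdot T^\sigma$ of the $\|\cdot\|^{\sigma,\alpha}$ norm, since $\Pi_N^\bot$ commutes with left and right multiplication by the diagonal $T^{\pm\sigma}$. This yields $e^{-\delta N}\|P\|_a^{\sigma,\alpha}$. For the series we use Lemma \ref{est_trans}(1) in its Lipschitz version, with $A=W\in\mathcal{B}^\alpha_{a,\sigma}$ and $B=P$, $[P]$ or $\Pi_NP$ in $\mathcal{B}_a^{\sigma,\alpha}$. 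Note that $\|[P]\|_a^{\sigma,\alpha}\le\|P\|_a^{\sigma,\alpha}$ and $\|\Pi_NP\|_a^{\sigma,\alpha}\le\|P\|_a^{\sigma,\alpha}$, since diagonal projection and truncation only delete entries and commute with $T^{\pm\sigma}$. Each $n$-th term is then bounded by $(2\|W\|^\alpha_{a,\sigma})^n\cdot 2\|P\|_a^{\sigma,\alpha}$ up to the factorials. Summing, with $2\|W\|<1$, gives at most $\bigl(e^{2\|W\|}-1\bigr)$ times a constant, which is $\le C\|W\|^\alpha_{a,\sigma}\|P\|^{\sigma,\alpha}_a$. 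Inserting the bound on $W$ yields the quadratic term $\frac{24(N+1)e^{2\rho N^\mu}}{\alpha}(\|P\|_a^{\sigma,\alpha})^2$, measured in the norm with index $a$ and hence also in index $a-\delta$ by monotonicity.

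The main obstacle I expect is norm bookkeeping. Lemma \ref{est_trans} needs $A$ in the mixed space $\mathcal{B}_{a,\sigma}$, and we must check that Lemma \ref{Solu} really controls $W$ there, including the $\|\cdot\|_a$ component; this is where Lemma \ref{260704-1} may be needed. We also must keep the constants honest so that the claimed factor $24$ emerges: roughly, $2\cdot 2\cdot(2N+2)\cdot(e-1)$-type constants bounded by $24(N+1)$. If the constants are tight, I would first try bounding $\sum_{n\ge1}x^n/n!\le 2x$ for $x\le1$, together with the analogous bound for the $(n+1)!$ sum.
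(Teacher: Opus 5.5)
Your proposal is correct and follows the paper's proof essentially step for step. The paper likewise takes $W$ from Lemma \ref{Solu}, rearranges the Lie series using $[W,D]=[P]-\Pi_NP$ to get the same expression for $P^+$, bounds $\Pi_N^{\bot}P$ by truncation, and controls the two commutator series with Lemma \ref{est_trans}. The bookkeeping you worried about is already settled: Lemma \ref{Solu} bounds $W$ directly in $\|\cdot\|_{a,\sigma}^{\alpha}$. Moreover, your crude bounds (each term at most $x^n\cdot 2\|P\|_a^{\sigma,\alpha}$, $\sum_{n\ge1}x^n/n!\le 2x$, $\sum_{n\ge1}x^n/(n+1)!\le x$, with $x=2\|W\|_{a,\sigma}^{\alpha}<1$) give $6x\|P\|_a^{\sigma,\alpha}\le \frac{24(N+1)e^{2\rho N^{\mu}}}{\alpha}\bigl(\|P\|_a^{\sigma,\alpha}\bigr)^2$, which is exactly the stated constant.
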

\begin{proof}
	First, by the Lie series expansion and the homological equation
	\eqref{homo1} in Lemma \ref{Solu}, we have
	\begin{align*}
		e^{W}(D+P)e^{-W}=&D+[W,D]+P
		+\sum_{n=2}^{\infty}\frac{\mathbf{ad}_W^n}{n!}(D)
		+\sum_{n=1}^{\infty}\frac{\mathbf{ad}_W^n}{n!}(P)\\
		=&D+[P]+ \Pi_N^{\perp} P
		+\sum_{n=2}^{\infty}\frac{\mathbf{ad}_W^{n-1}}{n!}([W,D])
		+\sum_{n=1}^{\infty}\frac{\mathbf{ad}_W^n}{n!}(P)\\
		=& D+[P]+ \Pi_N^{\perp} P
		+\sum_{n=2}^{\infty}
		\frac{\mathbf{ad}_W^{n-1}}{n!}(-\Pi_N P + [P])
		+\sum_{n=1}^{\infty}\frac{\mathbf{ad}_W^n}{n!}(P)
		\\
		:=& D^+ + P^+,
	\end{align*}
	where $D^+=D+[P]$ is the new diagonal operator, and
	\begin{align}\label{P1+}
		P^+= \Pi_N^{\perp} P
		+\sum_{n=2}^{\infty}
		\frac{\mathbf{ad}_W^{n-1}}{n!}(-\Pi_N P + [P])
		+\sum_{n=1}^{\infty}\frac{\mathbf{ad}_W^n}{n!}(P).
	\end{align}
	
	By Lemma \ref{Solu}, $W$ is a skew-adjoint operator, which directly implies that $e^{W}$ is a unitary operator.
	
	Next, we prove the estimate \eqref{P+2}. By Lemma \ref{est_trans}, for any $n\ge 1$, the adjoint action satisfies:
	\begin{align*}
		\left\| {\mathbf{ad}_W^n(P)} \right\|_{a}^{\sigma,\alpha}
		\leq \left( 2 \left\| W \right\|^{\alpha}_{a,\sigma}\right)^n
		\left\| P \right\|_{a}^{\sigma,\alpha}
		\le   \left( \frac{4(N+1)e^{2\rho N^{\mu}}}{\alpha}
		\left\|P \right\|_{a}^{\sigma,\alpha}\right)^n
		\left\| P \right\|_{a}^{\sigma,\alpha}.
	\end{align*}

	Therefore, for the series expansion, we obtain:
	\begin{align}\label{est_P+1}
		\left\|	\sum_{n=1}^{\infty}
		\frac{\mathbf{ad}_W^n(P)}{n!}		\right\|_{a-\delta}^{\sigma,\alpha}
		&\le	\left\| P \right\|_{a-\delta}^{\sigma,\alpha}		
		\sum_{n=1}^{\infty}
		\frac{1}{n!} \left( \frac{4(N+1)e^{2\rho N^{\mu}}}{\alpha}
		\left\|P \right\|_{a-\delta}^{\sigma,\alpha}\right)^n
		\nonumber\\
		&\le  \frac{4(N+1)e^{2\rho N^{\mu}}}{\alpha}
		\left(\left\|P \right\|_{a-\delta}^{\sigma,\alpha}\right)^2
		\sum_{n=0}^{\infty}
		\frac{ 1}{n!} \left( \frac{4(N+1)e^{2\rho N^{\mu}}}{\alpha}
		\left\|P \right\|_{a-b}^{\sigma,\alpha}\right)^n
		\nonumber\\
		&\le  \frac{4(N+1)e^{2\rho N^{\mu}}}{\alpha}
		\exp\Big(\frac{4(N+1)e^{2\rho N^{\mu}}}{\alpha}
		\left\|P \right\|_{a-\delta}^{\sigma,\alpha}\Big)
		\left(\left\|P \right\|_{a-\delta}^{\sigma,\alpha}\right)^2
		\nonumber\\
		&\le  \frac{12(N+1)e^{2\rho N^{\mu}}}{\alpha}
		\left(\left\|P \right\|_{a-\delta}^{\sigma,\alpha}\right)^2,
	\end{align}
	where in the last inequality, we use
	$\frac{4(N+1))e^{2\rho N^{\mu}}}{\alpha}
	\left\|P \right\|_{a-\delta}^{\sigma,\alpha}\leq1.$
	Analogously, noting that $\|-\Pi_N P + [P]\|^{\sigma,\alpha}_{a-\delta} \le 2\|P\|^{\sigma,\alpha}_{a-\delta}$, we can obtain:
	
	\begin{equation}\label{est_P+2}
		\begin{split}
			\left\|\sum_{n=2}^{\infty}			\frac{\mathbf{ad}_W^{n-1}}{n!}(-\Pi_N P + [P])	\right\|_{a-\delta}^{\sigma,\alpha}&\le	\left\| P \right\|_{a-\delta}^{\sigma,\alpha}		
			\sum_{n=2}^{\infty}
			\frac{1}{n!} \left( \frac{4(N+1)e^{2\rho N^{\mu}}}{\alpha}
			\left\|P \right\|_{a-\delta}^{\sigma,\alpha}\right)^{n-1}
			\\
			&\le  \frac{12(N+1)e^{2\rho N^{\mu}}}{\alpha}
			\left(\left\|P \right\|_{a-\delta}^{\sigma,\alpha}\right)^2.
		\end{split}
	\end{equation}
	
	By combining \eqref{P1+} with the bounds \eqref{est_P+1} and \eqref{est_P+2}, we derive that
	\begin{align}\label{est_a-b}
		\left\|P^+ \right\|_{a-\delta}^{\sigma,\alpha}
		&\le 	\left\|\Pi_N^{\perp} P \right\|_{a-\delta}^{\sigma,\alpha} + \frac{24(N+1)e^{2\rho N^{\mu}}}{\alpha}
		\left(\left\|P \right\|_{a-b}^{\sigma,\alpha}\right)^2
		\nonumber \\
		&\le e^{-\delta N} \left\|P \right\|_{a}^{\sigma,\alpha}
		+  \frac{24(N+1)e^{2\rho N^{\mu}}}{\alpha}
		\left(\left\|P \right\|_{a}^{\sigma,\alpha}\right)^2.
	\end{align}
\end{proof}

\subsubsection{The iteration process}\

%

Set  $$\epsilon_0:=e^{\frac{C_{\sigma,\tau,a}}{\gamma}}\varepsilon, \quad a_0:=\frac{a}{2},\quad \alpha:=\epsilon_0^{\frac{1}{5}}, \quad c_0:=\sum^{\infty}_{m=1}\frac{1}{m^2}.$$
For $m=1,2,\cdots$, we specify the precise setup of the iteration parameters:
\begin{itemize}
		
	\item$\epsilon_m=\epsilon_{0}^{(\frac{3}{2})^m}$, \quad  the size of the perturbation. 
			
	\item$\delta_{m}=\frac{a}{ 2 c_0 (m+1)^2}$,  \quad the loss of the exponential weight.
	
	\item$a_{m}=a_{m-1}-\delta_{m-1}$, \quad the exponential weight  of the operator at the $m$-th step.
	
	\item$N_m=\delta_m^{-1}(\frac{3}{2})^{m}
	|\ln\epsilon_0|$, \quad  the truncation length of the operator.


\end{itemize}
Then, by the choice of $N_m$, we have
\begin{equation}\label{ }
	e^{-\delta_mN_m}=\epsilon_m
	\quad\text{and}\quad
	a_{\infty}=\lim_{m\rightarrow\infty}a_m=\frac{a_0}{2}.
\end{equation}

\begin{thm}\label{Iter_th}
	(Iteration theorem)
Fix $0<\mu<1$, $\rho>0$, and $\tau,\sigma>1$.
	Then, there exists a sufficiently small constant $\epsilon_0:=\epsilon(\gamma,a,\varepsilon,\mu,\rho,\tau)$ such that if
	\begin{equation}
	\left\|P^0 \right\|_{a_0}^{\sigma,\alpha}\le  \epsilon_0 ,
	\end{equation}
	then for any $m\ge0$, the following statements hold:
	
	\textbf{$($T1$)_m$}: Define the initial nonresonant set $\mathcal{O}_{-1}=\mathrm{DC}_{\gamma,\mu} \cap \mathcal{O}$. For $m\geq 0$, recursively set 
		\begin{equation}\label{set_O^m}
		\mathcal{O}_{m}=\left\lbrace
		\theta\in \mathcal{O}_{m-1}:
		\left|D_{\theta}^m(n)-D_{\theta}^m(n')\right|\geq
		\frac{\alpha}{e^{\rho |n-n'|^{\mu}}}, \quad n\neq n'
		\right\rbrace,
	\end{equation}
	where $D_{\theta}^m$ denotes the diagonal operator constructed in the preceding step. 
	On each of these sets, where $\theta\in \mathcal{O}_{m}$,  there exists a self-adjoint operator $D^m + P^m$. Here, $D_{\theta}^m$ is a real diagonal operator given by:
	\begin{equation}\label{Dm_diag}
		D_{\theta}^m=\mathbf{diag}\big\{ {D}_{\theta}^m(n)
		:n\in\mathbb{Z}^2\big\}\quad\text{with}
		\quad {D}_{\theta}^m(n)=\omega\cdot n+d^m_n(\theta).
	\end{equation}
	Moreover, for $ m\ge 1$, the diagonal entries satisfy the estimate
	\begin{equation}\label{est_D^m}
		 \left| D_{\theta}^{m}(n)-D_{\theta}^{m-1}(n)\right|\le
		 \frac{1}{\ln^{\sigma} \langle n \rangle }
		 \left\| P^{m-1}\right\|_{a_{m-1}}^{\sigma,\alpha}, \quad  \forall\ \theta \in \mathcal{O}_m.
	\end{equation}
	
	\textbf{$($T2$)_m$}: The operator $P^m$ is self-adjoint, belongs to $\mathcal{B}_{a_m}^{\sigma,\alpha}$, and fulfills the norm bound
	\begin{equation}\label{est_P^m}
		\left\|P^m \right\|_{a_m}^{\sigma,\alpha}
		\le \epsilon_m.
	\end{equation}
	
	\textbf{$($T3$)_m$}: There exists a unitary operator $e^{W^m}$ such that the similarity transformation
	\begin{equation*}
		e^{W^m} \left(D^m+P^m \right)  e^{-W^m}
		=D^{m+1}+P^{m+1}
	\end{equation*}
	holds. The generator $W^m$is skew-adjoint, belongs to $\mathcal{B}_{a_m}^{\sigma,\alpha}$, and 
	satisfies
	\begin{equation}\label{est_W^m}
		\left\|W^m \right\|_{a_m}^{\sigma,\alpha}
		\le \epsilon_m^{0.7}.
	\end{equation}
\end{thm}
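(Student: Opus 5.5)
The proof is an induction on $m$. At each step we apply Lemma \ref{P_0} on the current parameter set $\mathcal{O}_m$, with $N=N_m$, $\delta=\delta_m$, $a=a_m$, and we check that the quantitative hypotheses are preserved.

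\textbf{Base case $m=0$.} Take $D^0=D_{\text{new}}$, so $d^0_n\equiv0$, and $P^0=P_{\text{new}}$. By Lemma \ref{P_new} and \eqref{pn}, $P^0$ is self-adjoint with $\|P^0\|_{a_0}^{\sigma,\alpha}\le\epsilon_0$. Here $\alpha=\epsilon_0^{1/5}\le\gamma$ for $\varepsilon$ small, so the Lipschitz weight $\alpha$ is dominated by the weight $\gamma$ used in Lemma \ref{P_new}, and the norms are comparable. Thus (T1)$_0$ and (T2)$_0$ hold, with $\mathcal{O}_0$ defined by \eqref{set_O^m}.

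\textbf{Inductive step.} Assume (T1)$_m$ and (T2)$_m$.

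First we obtain the Lipschitz hypothesis \eqref{non_res2} of Lemma \ref{Solu}. Summing \eqref{est_D^m} over previous steps, and using the Lipschitz part of the $\|\cdot\|^{\sigma,\alpha}$ norm of $[P^k]$, gives
\[
|\bigtriangleup_\theta d^m_n|\le \alpha^{-1}\sum_k\epsilon_k|\bigtriangleup\theta|\le 2\epsilon_0^{4/5}|\bigtriangleup\theta|,
\]
so the constant $C<1$ in \eqref{non_res2} is available.

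Next we check the smallness condition \eqref{ass2}. It reads
\[
4(N_m+1)e^{2\rho N_m^\mu}\alpha^{-1}\epsilon_m<1 .
\]
Since $N_m\sim (m+1)^2(3/2)^m|\ln\epsilon_0|$ and $\mu<1$, we have $e^{2\rho N_m^\mu}\le \epsilon_m^{-0.05}$ for $\epsilon_0$ small, uniformly in $m$. Indeed, $N_m^\mu\ll (3/2)^m|\ln\epsilon_0|$ once $\mu<1$, because $((3/2)^m|\ln\epsilon_0|)^{\mu}$ is $o$ of $(3/2)^m|\ln\epsilon_0|$ uniformly in $m$ as $|\ln\epsilon_0|\to\infty$, and the polynomial factor $(m+1)^{2\mu}$ is absorbed by the gap in exponents. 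This is the main technical point. Granting it, the left side of \eqref{ass2} is at most $\epsilon_m^{1-0.05-0.2-0.01}\ll1$.

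Lemma \ref{Solu} then gives $W^m$ with
\[
\|W^m\|_{a_m,\sigma}^\alpha\le \epsilon_m^{1-0.26}\le\epsilon_m^{0.7}.
\]
To get the stated norm $\|W^m\|_{a_m}^{\sigma,\alpha}$, we would run the same construction. The entrywise bound $|W_{ij}|\le \alpha^{-1}e^{\rho N^\mu}|P_{ij}|$ transfers directly to $T^\sigma W$ and $WT^\sigma$, since these are just weighted versions of $P$. This gives (T3)$_m$, with $e^{W^m}$ unitary by skew-adjointness.

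Lemma \ref{P_0} now yields $D^{m+1}=D^m+[P^m]$ and
\[
\|P^{m+1}\|_{a_{m+1}}^{\sigma,\alpha}\le e^{-\delta_mN_m}\epsilon_m+\epsilon_m^{2-0.26}\le \epsilon_m^2+\epsilon_m^{1.74}\le\epsilon_m^{3/2}=\epsilon_{m+1},
\]
using $e^{-\delta_mN_m}=\epsilon_m$. This is (T2)$_{m+1}$.

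For \eqref{est_D^m}, note that
\[
|[P^m]_{nn}|=(\ln\langle n\rangle)^{-\sigma}|(T^\sigma P^m)_{nn}|\le(\ln\langle n\rangle)^{-\sigma}\|T^\sigma P^m\|_{a_m}.
\]
This is exactly why the $T^\sigma$-weighted norm was introduced. The diagonal of $D^{m+1}$ is real because $P^m$ is self-adjoint. Finally, $\mathcal{O}_{m+1}$ is defined by \eqref{set_O^m} with $D^{m+1}$, and on it the nonresonance hypothesis of Definition \ref{non-res1} holds by construction. This closes (T1)$_{m+1}$.

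The main obstacle is the uniform-in-$m$ absorption of the small-divisor loss $e^{2\rho N_m^\mu}(N_m+1)/\alpha$ into a small power of $\epsilon_m$. It depends on $\mu<1$, on the super-exponential decay $\epsilon_m=\epsilon_0^{(3/2)^m}$, and on choosing $\epsilon_0$ small in terms of $\rho,\mu,a$. I would verify it by taking logarithms and comparing $\rho N_m^\mu$ with $0.05\,(3/2)^m|\ln\epsilon_0|$.
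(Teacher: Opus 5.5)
Your proposal is correct and follows the paper's proof essentially step for step. The paper also argues by induction on $m$. It uses the Lipschitz bound $|\bigtriangleup_\theta d^m_n|\le 2\epsilon_0\alpha^{-1}|\bigtriangleup\theta|$ to apply Lemma \ref{Solu}, and absorbs $(N_m+1)e^{2\rho N_m^{\mu}}\alpha^{-1}$ into a small negative power of $\epsilon_m$, uniformly in $m$ because $\mu<1$, to check \eqref{ass2}. It then applies Lemma \ref{P_0} with $e^{-\delta_m N_m}=\epsilon_m$, and controls the diagonal shift through the $T^\sigma$-weighted norm.

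Your two additional remarks are correct and fill points the paper passes over. First, $\alpha=\epsilon_0^{1/5}\le\gamma$ gives $\|P^0\|^{\sigma,\alpha}_{a_0}\le\|P^0\|^{\sigma,\gamma}_{a_0}$. Second, the entrywise bound on $W^m$ also controls $T^\sigma W^m$ and $W^m T^\sigma$. This yields the $\|\cdot\|^{\sigma,\alpha}_{a_m}$ bound actually stated in (T3), whereas the paper only derives the $\|\cdot\|^{\alpha}_{a_m,\sigma}$ bound.
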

\begin{proof}
	Let $D^0$ and $P^0$ be defined as
	in Lemma \ref{P_new}.
	Fixing $a_0=\frac{a}{2}$, it follows from Lemma \ref{P_new} that one can choose a sufficiently small $\varepsilon$ such that 
	\begin{equation}
		\|P^0\|^{\sigma,\alpha}_{a_0}\leq e^{\frac{C_{\sigma,\tau}}{\gamma}}\varepsilon \leq \epsilon_0,
		\end{equation}
	and $D^0$ is defined on the set $\mathcal{O}_{-1}$.
Consequently, the statements in \textbf{$($T1$)_0$} and \textbf{$($T2$)_0$} hold true.

Next, assume that for some $m\ge0$, the statements in \textbf{$($T1$)_m$} and \textbf{$($T2$)_m$} hold true.
We  show that \textbf{$($T3$)_m$}, \textbf{$($T1$)_{m+1}$} and \textbf{$($T2$)_{m+1}$} also hold ture.

\textbf{Step 1. Proof of statement \textbf{$($T3$)_{m}$}.} We first construct the unitary operator $e^{W_m}$.
From the definition of $||\cdot||_{a}^{\sigma,\alpha}$ in
\eqref{norm_lip} and the iterative assumptions \eqref{Dm_diag}, \eqref{est_D^m}, for any $\theta_{1}, \theta_{2}\in \mathcal{O}_{m}$, we can obtain the variation of the diagonal frequencies:
	\begin{align}
	\left| \frac{\bigtriangleup_{\theta}d^m_n(\theta) }{\bigtriangleup_{\theta}\theta }\right|
	&\le \sum_{s=1}^{m}\frac{\left| \bigtriangleup_{\theta}\left(D_{\theta}^s(n)- D_{\theta}^{s-1}(n)\right)  \right|} {\left|  \bigtriangleup_{\theta}\theta \right| }
	\nonumber\\
	&\le  \frac{1}{\alpha\ln^{\sigma} \langle n \rangle } \sum_{s=1}^{m}
	\left\| P^{s-1}\right\|_{a_{s-1}}^{\sigma,\alpha}
	\nonumber\\
	&\le  \frac{1}{\alpha\ln^{\sigma} \langle n \rangle }
	 \sum_{s=0}^{m-1}  \epsilon_{s}
	 \le  \frac{ 2\epsilon_{0} }{\alpha}\nonumber\\
	 &\leq 1\nonumber.
	\end{align}	
	This implies that assumption \eqref{non_res2}
	in Lemma \ref{Solu} holds.
	Then, according to Lemma \ref{Solu}, there exists an operator $W^m$ satisfying
	\begin{align}\label{est_Wm}
		\left\|W^m\right\|_{a_m,\sigma}^{\alpha}
		\le&
			\frac{(2N_m+2)e^{2\rho N_m^{\mu} }}{\alpha}
		\left\|P \right\|_{a_m}^{\sigma,\alpha}	\nonumber\\
		\le& \frac{\epsilon^{-0.1}_m}{\alpha}\epsilon_m \le \epsilon^{0.7}_m,
	\end{align}
	where we have used
    \eqref{est_P^m} and the fact that 
	\begin{equation}\label{est_small}
		\begin{split}
		2(N_m+1)e^{2\rho N_m^{\mu} }&\leq \Big(\frac{4c_0(m+1)^2}{a}(\frac{3}{2})^m|\ln \epsilon_0|+2\Big)\exp{\Big(\frac{4 \rho c_0(m+1)^2}{a}(\frac{3}{2})^{\mu m}|\ln \epsilon_0|^{\mu}\Big)}\\
		& \leq \epsilon^{-0.05}_0 \exp{\big(0.05 (\frac{3}{2})^m|\ln \epsilon_0|\big)}\\
		& \leq \epsilon_m^{-0.05} \epsilon_m^{-0.05} \leq\epsilon_m^{-0.1}.
		\end{split}
	\end{equation}
	The second inequality holds	since $0<\mu<1$  and $\epsilon_0=\epsilon_0(a,\rho,\mu,\alpha)$ is chosen to be sufficiently small.

\textbf{Step 2. Proof of statements \textbf{$($T1$)_{m+1}$} and \textbf{$($T2$)_{m+1}$}.} From \eqref{est_P^m} in \textbf{$($T2$)_{m}$}, one can obtain 
\begin{equation}
	\frac{4(N_m+1)e^{2\rho N_m^{\mu} }}{\alpha}
\left\|P \right\|_{a_m}^{\sigma,\alpha}	
\le 2\frac{\epsilon^{-0.1}_m}{\alpha}\epsilon_m \le 2\epsilon^{0.7}_m
<1.
\end{equation}
Hence, assumption \eqref{ass2} in Lemma \ref{P_0} is satisfied. We can obtain
\begin{equation*}
	e^{ W^m} (D^m+P^m) e^{-W^m}=D^{m+1} + P^{m+1},
\end{equation*}
where
\begin{equation*}\label{shift}
	D^{m+1}=D^{m}+[P^m]
\end{equation*}
and
\begin{align}\label{est_P^m+1}
	\left\| P^{m+1} \right\|_{a_{m+1}}^{\sigma,\alpha}
	&\le
	e^{-\delta_m N_m} \left\|P^m \right\|_{a_m}^{\sigma,\alpha}
	+ \frac{24(N_m+1)e^{2\rho N_m^{\mu}}}{\alpha}
 	\left(\left\|P^m \right\|_{a_m}^{\sigma,\alpha}\right)^2.
\end{align}

Also, for any $n\in\mathbb{Z}^2$, it follows from \eqref{shift} that
	\begin{equation*}
	\left| D_{\theta}^{m+1}(n)-D_{\theta}^{m}(n)\right|
	\le \left| P^m_{n,n}  \right|
	\le
	\frac{1}{\ln^{\sigma} \langle n \rangle }
	\left\| P^{m}\right\|_{a_{m}}^{\sigma,\alpha},
\end{equation*}
which implies that
	\begin{equation}\label{est_shift}
	\left|d^{m+1}_n(\theta)-d_n^0(\theta) \right|
	\le
	\frac{1}{\ln^{\sigma} \langle n \rangle }\left(
\sum_{s=0}^{m}	\left\| P^{s}\right\|_{a_{s}}^{\sigma,\alpha}
	\right) 	\le
	\frac{2\epsilon_0}{\ln^{\sigma} \langle n \rangle }.
\end{equation}

Substituting the bound \eqref{est_small} into \eqref{est_P^m+1}, we have
\begin{align*}
	\left\| P^{m+1} \right\|_{a_{m+1}}^{\sigma,\alpha}
	&\le \epsilon_m\ e^{-\delta_m N_m}+\frac{24(N_m+1)e^{2\rho N_m^{\mu}}}{\alpha}\epsilon_m^2\\
	&\le \epsilon_m^{2} + 12\epsilon_{m}^{0.7}\epsilon_m\\
	&\le \epsilon_m^{\frac32}
	= \epsilon_{m+1},
\end{align*}
where we have used the fact that $e^{-\delta_m N_m}$ and the quadratic term are sufficiently small due to \eqref{est_small}.
Thus, the estimates \eqref{est_D^m} and
\eqref{est_P^m} for $m+1$ are verified.
\end{proof}

\begin{thm}\label{conver}Fix the constants $a,\sigma,\alpha, \tau, \mu$ and $\rho$ as in Theorem \ref{Iter_th}.
	Let
	\begin{equation*}
		\left\|P^0 \right\|_{a_0}^{\sigma,\alpha}\le \epsilon_0.
	\end{equation*}
If $\epsilon_0$ is sufficiently small,
then for any $\theta\in \cap_{m=0}^{\infty} \mathcal{O}_{m}$, the sequence of compositions
\begin{equation}\label{sequence}
	\mathcal{W}_m=e^{W^m}\circ \cdots \circ e^{W^0}
\end{equation}
converges in $\left\| \cdot \right\|_{a/2,\sigma}^{\alpha}$
to a unitary operator $\mathcal{W}_{\infty}$ satisfying
\begin{equation}
	\left\| \mathcal{W}_{\infty}-\text{Id} \right\|_{a/2,\sigma}^{\alpha}
	+ \left\| \mathcal{W}^{-1}_{\infty}-\text{Id} \right\|_{a/2,\sigma}^{\alpha}\le \epsilon_0^{0.5}.
\end{equation}
Moreover, there exists a diagonal operator $D^{\infty}$ such that
\begin{equation*}
	\mathcal{W}_{\infty} (D^0+P^0) \mathcal{W}^{-1}_{\infty}=D^{\infty}
\end{equation*}
and
\begin{equation}\label{est_D^inf}
	\left| D_{\theta}^{\infty}(n)-D_{\theta}^{0}(n)\right|\le
	\frac{2\epsilon_{0}}{\ln^{\sigma} \langle n \rangle}.
\end{equation}
\end{thm}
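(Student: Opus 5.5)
We argue by iterating Theorem \ref{Iter_th}. For every $\theta\in\bigcap_{m\ge0}\mathcal{O}_m$, statements (T1)$_m$–(T3)$_m$ hold for all $m$. In particular each generator satisfies $\|W^m\|_{a_m}^{\sigma,\alpha}\le\epsilon_m^{0.7}$, where $\epsilon_m=\epsilon_0^{(3/2)^m}$. Since $a_m\downarrow a_\infty$ and $a_m\ge a/2\cdot\frac12$ up to the normalization of $a_0$, all weights $a_m$ dominate the limiting weight. We therefore measure everything in the smallest weight (written $a/2$ in the statement), using monotonicity of $\|\cdot\|_{a}$ in $a$. We first pass from $\|\cdot\|^{\sigma,\alpha}_{a_m}$ to $\|\cdot\|_{a_m,\sigma}^{\alpha}$: the latter is dominated by the former via Lemma \ref{260704-1} together with the definitions. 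This gives $\|W^m\|_{a/2,\sigma}^\alpha\lesssim\epsilon_m^{0.7}$.

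\textbf{Convergence.} Lemma \ref{lem8.28.1}(4),(6), extended to $\|\cdot\|_{a,\sigma}^\alpha$, yields
\[
\|e^{\pm W^m}-\mathrm{Id}\|\le 2\|W^m\|\le 2\epsilon_m^{0.7}.
\]
This extension holds because the conjugated norms $T^{\pm\sigma}\cdot T^{\mp\sigma}$ are multiplicative: $T^{\sigma}ABT^{-\sigma}=(T^\sigma AT^{-\sigma})(T^\sigma BT^{-\sigma})$. Submultiplicativity then gives
\[
\|\mathcal{W}_m\|\le\prod_{s\le m}(1+2\epsilon_s^{0.7})\le 2
\]
and
\[
\|\mathcal{W}_{m+1}-\mathcal{W}_m\|=\|(e^{W^{m+1}}-\mathrm{Id})\mathcal{W}_m\|\le 4\epsilon_{m+1}^{0.7}.
\]
Because $\epsilon_m$ decays superexponentially, the sequence is Cauchy in the Banach algebra $\mathcal{B}^\alpha_{a/2,\sigma}$. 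Summing the increments gives $\|\mathcal{W}_\infty-\mathrm{Id}\|\le\sum_s4\epsilon_s^{0.7}\le\frac12\epsilon_0^{0.5}$ for $\epsilon_0$ small. The same argument applies to $\mathcal{W}_m^{-1}=e^{-W^0}\cdots e^{-W^m}$, whose increments are $\mathcal{W}_m^{-1}(e^{-W^{m+1}}-\mathrm{Id})$. Each $\mathcal{W}_m$ is unitary because each $W^m$ is skew-adjoint. Convergence in $\|\cdot\|_{a/2}$ implies convergence in $\ell^2$ operator norm, by the Schur-type bound mentioned in the remark after Definition \ref{space s}. Hence $\mathcal{W}_\infty$ is unitary and its inverse is the limit of $\mathcal{W}_m^{-1}$.

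\textbf{Diagonalization.} By (T3),
\[
\mathcal{W}_m(D^0+P^0)\mathcal{W}_m^{-1}=D^{m+1}+P^{m+1},
\]
with $\|P^{m+1}\|_{a/2}\le\epsilon_{m+1}\to0$. By \eqref{est_shift}, the diagonal entries $D^m_\theta(n)=D^0_\theta(n)+d^m_n$ form Cauchy sequences, with increments bounded by $\epsilon_m/\ln^\sigma\langle n\rangle$. We define $D^\infty_\theta(n)=\lim_m D^m_\theta(n)$. The bound \eqref{est_D^inf} follows from $\sum_s\epsilon_s\le2\epsilon_0$.

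The main obstacle is that $D^0$ is unbounded, so we cannot pass to the limit in operator norm on the left-hand side. We would handle this on finitely supported vectors, which is harmless since all operators in $\mathcal{B}_a$ map $\mathcal{S}$ to itself. Equivalently, we write $\mathcal{W}_m(D^0+P^0)\mathcal{W}_m^{-1}-D^0=(D^{m+1}-D^0)+P^{m+1}$. The right-hand side is bounded, with $\|D^{m+1}-D^0\|\le2\epsilon_0$, and converges in norm to $D^\infty-D^0$. For the left-hand side, we compute
\[
\mathcal{W}_m D^0\mathcal{W}_m^{-1}-D^0=[\mathcal{W}_m-\mathrm{Id},D^0]\mathcal{W}_m^{-1}.
\]
Its matrix entries are $(\mathcal{W}_m-\mathrm{Id})_{ij}\,\omega\cdot(j-i)$, and the factor $|i-j|$ is absorbed by a slight loss of the exponential weight, as in Lemma \ref{norm_shift}. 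This gives convergence entrywise and on $\mathcal{S}$. We conclude $\mathcal{W}_\infty(D^0+P^0)\mathcal{W}_\infty^{-1}=D^\infty$ on $\mathcal{S}$, and then on the natural domain by closure.
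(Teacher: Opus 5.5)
Your proposal is correct and follows essentially the paper's route: the bound $\|e^{\pm W^m}-\mathrm{Id}\|\le 2\epsilon_m^{0.7}$, the product bound on $\mathcal{W}_m$, the telescoping Cauchy argument (repeated for $\mathcal{W}_m^{-1}$), unitarity by passing to the limit, and summing $|P^m_{n,n}|\le \epsilon_m/\ln^{\sigma}\langle n\rangle$ to obtain $D^\infty$ and its estimate. You go beyond the paper by actually justifying $\mathcal{W}_\infty(D^0+P^0)\mathcal{W}_\infty^{-1}=D^\infty$ for the unbounded $D^0$, which the paper leaves implicit; you do this through the commutator $[\mathcal{W}_m,D^0]$, whose entries are $(\mathcal{W}_m)_{ij}\,\omega\cdot(j-i)$ and which is controlled by a small loss of weight. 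The only slip, shared with the paper's ``$\le e$'', is that $\|\mathrm{Id}\|_{a,\sigma}^{\alpha}=3$, so your bound $\|\mathcal{W}_m\|\le 2$ holds only up to a harmless constant factor.
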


\begin{proof}
	First, we prove that $\{\mathcal{W}_{m}\}_{m\ge 0} $ is a bounded sequence. Note that
	\begin{equation}
		e^{W^m}=\text{Id}+\sum_{n=1}^{\infty}\frac{(W^m)^n}{n!}.
	\end{equation}
	Then by using \eqref{est_W^m}, we have
	\begin{equation}\label{W^m-id}
		\left\| 	e^{W^m}-\text{Id}\right\|_{a/2,\sigma}^{\alpha}
		\le 2	\left\| {W}_{m}\right\|_{a_m,\sigma}^{\alpha}
		\le 2 \epsilon_m^{0.7}.
	\end{equation}
	Moreover, the composition sequence $\mathcal{W}_m$ satisfies
	\begin{equation}\label{W_m-id}
			\mathcal{W}_m=e^{W^m}\circ\mathcal{W}_{m-1}
			=\mathcal{W}_{m-1}+\left( e^{W^m}-\text{Id} \right) \circ	\mathcal{W}_{m-1}
	\end{equation}
	which leads to
	\begin{align}\label{est_W_m}
		\left\| \mathcal{W}_{m}\right\|_{a/2,\sigma}^{\alpha}
		&\le \left\| \mathcal{W}_{m-1}\right\|_{a/2,\sigma}^{\alpha}+
		\left\|  e^{W^m}-\text{Id} \right\|_{a/2,\sigma}^{\alpha}		\left\|\mathcal{W}_{m-1}\right\|_{a/2,\sigma}^{\alpha}
		\nonumber\\
		&\le \left\| \mathcal{W}_{m-1}\right\|_{a/2,\sigma}^{\alpha}
		\left(1+ 2 \epsilon_m^{0.7} \right).
	\end{align}
	Iterating the above inequality, we deduce that for any $m \ge 0$:
		\begin{align}\label{est_W_m1}
		\left\| \mathcal{W}_{m}\right\|_{a/2,\sigma}^{\alpha}
		\le \prod_{s=0}^{m}
		\left(1+ 2\epsilon_s^{0.7} \right)
		\le e,
	\end{align}
	where the last inequality follows from
	\begin{equation*}
	\ln   \prod_{s=0}^{m}
	\left(1+ \epsilon_m^{0.9} \right)
	 = \sum_{s=0}^{m} \ln \left( 1+2\epsilon_s^{0.7}\right)
	  \le   \sum_{s=0}^{m}  2\epsilon_s^{0.7} \le \epsilon_0^{0.5}.
	\end{equation*}

	Next, we show that
	$\{\mathcal{W}_{m}\}_{m\ge 0} $ is a Cauchy sequence with respect to $\left\| \cdot \right\|_{a/2,\sigma}^{\alpha}$.
	
	 By using \eqref{W_m-id}, \eqref{est_W_m} and \eqref{est_W_m1},  one obtains for any $m\ge1$:
	\begin{align*}
		\left\| \mathcal{W}_{m}-\mathcal{W}_{m-1} \right\|_{a/2,\sigma}^{\alpha}
	\le 2\epsilon_m^{0.7}	\left\| \mathcal{W}_{m-1}\right\|_{a/2,\sigma}^{\alpha}
	\le 2e\epsilon_m^{0.7},
	\end{align*}
	and then for any $m\ge0$ and $n\ge1$:
	\begin{align}\label{W_m-n}
		\left\| \mathcal{W}_{m+n} - \mathcal{W}_{m}\right\|_{a/2,\sigma}^{\alpha}
		&\le \sum_{s=m}^{m+n-1}
		\left\| \mathcal{W}_{s+1} - \mathcal{W}_{s}\right\|_{a/2,\sigma}^{\alpha}
		\nonumber\\
		&\le 2e \sum_{s=m+1}^{m+n} \epsilon_{s}^{0.7}
		\le \epsilon_{m+1}^{0.5},
	\end{align}
	which tends to $0$ as $m\to \infty$.
	Thus, the sequence	$\{\mathcal{W}_{m}\}_{m\ge 0} $ converges with respect to $\left\| \cdot \right\|_{a/2,\sigma}^{\alpha}$ to an operator
	$\mathcal{W}_{\infty}$. Furthermore, from \eqref{W^m-id} and \eqref{W_m-n},  $\mathcal{W}_{\infty}$
	 satisfies
	\begin{equation}
		\begin{split}
		\left\| \mathcal{W}_{\infty} - \text{Id}\right\|_{a/2,\sigma}^{\alpha}
		&\le 	\left\| \mathcal{W}_{\infty} - \mathcal{W}_{0} \right\|_{a/2,\sigma}^{\alpha}
		+ 	\left\|  \mathcal{W}_{0}-\text{Id} \right\|_{a/2,\sigma}^{\alpha}\\
	&\le  \epsilon_1^{0.5}+2\epsilon_{0}^{0.7}\\
		&\le \epsilon_{0}^{0.5}.
		\end{split}
 	\end{equation}

Similarly, we can show that the sequence of inverses
\begin{equation*}
	\mathcal{W}^{-1}_m= e^{-W^0} \circ \cdots \circ e^{-W^m}
\end{equation*}
	is also a Cauchy sequence, and $\mathcal{W}^{-1}_m$ converges to $\mathcal{W}^{-1}_{\infty}$ satisfying the corresponding estimate.
In addition, according to \eqref{sequence}, the operator $\mathcal{W}_{m}$ is unitary
since each operator $e^{W^m}$ is unitary.
Therefore, we have
\begin{equation*}
	\mathcal{W}_{\infty} \mathcal{W}^*_{\infty}
	= \lim\limits_{m\to \infty}\mathcal{W}_{m} \mathcal{W}^*_{m}=
	\text{Id}
\end{equation*}
and
\begin{equation*}
	\mathcal{W}^*_{\infty} \mathcal{W}_{\infty}
	= \lim\limits_{m\to \infty}\mathcal{W}^*_{m} \mathcal{W}_{m}=\text{Id},
\end{equation*}
which implies that $\mathcal{W}_{\infty}$ is indeed a unitary operator.
	
We also show that the sequence $\{D_{\theta}^m(n)\}_{m\geq0}$ is Cauchy and converges to $D^{\infty}(n)$, which  satisfies
	\begin{align*}
		\left|  D_{\theta}^{\infty}(n)-D_{\theta}^{0}(n) \right|
		&\le \sum_{m=0}^{\infty} \left|P^m_{n,n} \right|
		\le  \frac{1}{\ln^{\sigma} \langle n \rangle} \sum_{m=0}^{\infty}  \left\| P^{m} \right\|_{a_{m}}^{\sigma,\alpha}	
		\nonumber\\
		&\le \frac{1}{\ln^{\sigma} \langle n \rangle} \sum_{m=0}^{\infty}  \epsilon_{m}\le \frac{2\epsilon_{0}}{\ln^{\sigma} \langle n \rangle},
	\end{align*}
	which implies \eqref{est_D^inf}. This completes the proof.
\end{proof}

\subsubsection{Measure estimates}\

From Theorem \ref{conver}, we can define the diagonal operator $D^{\infty}$:
\begin{equation}
	\begin{split}
D^{\infty}&=\mathbf{diag}\big\{D^{\infty}_n(\theta): n\in \mathbb{Z}^2\big\}
\end{split}
\end{equation}
where 
$$D^{\infty}_n(\theta)=\omega(\theta)\cdot n+d^{\infty}_{n}(\theta),$$
and the nonresonant set 
\begin{equation}\label{seti}
	\widetilde{\mathcal{O}}^{\infty}:=\big\{\theta \in \mathcal{O}_{-1}: |D^{\infty}_n(\theta)-D^{\infty}_{n'}(\theta)|\geq \frac{2\alpha}{e^{\rho|n-n'|^{\mu}}}, \quad n\neq n'\big\}.
\end{equation}
\begin{lem}
	The non-resonant set $\tilde{\mathcal{O}}^\infty$ defined in \eqref{seti} satisfies
	$$\widetilde{\mathcal{O}}^{\infty}\subseteq 
	\bigcap^{\infty}_{s=-1}\mathcal{O}_s .$$
\end{lem}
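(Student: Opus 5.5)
We argue by induction on $s$: fix $\theta\in\widetilde{\mathcal{O}}^{\infty}$ and show $\theta\in\mathcal{O}_s$ for every $s\ge -1$. The case $s=-1$ is immediate, since \eqref{seti} requires $\theta\in\mathcal{O}_{-1}$. Assume $\theta\in\mathcal{O}_{s-1}$ for some $s\ge 0$. Then $D^s_\theta$ is defined at $\theta$, and by \eqref{set_O^m} it suffices to check
\[
\left|D^s_\theta(n)-D^s_\theta(n')\right|\ge \frac{\alpha}{e^{\rho|n-n'|^{\mu}}},\qquad n\neq n'.
\]

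The key step is comparing $D^s$ with $D^\infty$. The construction in Theorem \ref{Iter_th} and Theorem \ref{conver} gives $D^{m+1}(n)-D^m(n)=P^m_{n,n}$ with $|P^m_{n,n}|\le \epsilon_m/\ln^{\sigma}\langle n\rangle$. Summing the tail over $m\ge s$ yields
\[
\left|D^\infty_\theta(n)-D^s_\theta(n)\right|\le \frac{1}{\ln^\sigma\langle n\rangle}\sum_{m\ge s}\epsilon_m\le 2\epsilon_s\le 2\epsilon_0 .
\]
The triangle inequality then gives
\[
\left|D^s_\theta(n)-D^s_\theta(n')\right|\ge \frac{2\alpha}{e^{\rho|n-n'|^{\mu}}}-4\epsilon_s .
\]
To conclude we need $4\epsilon_s\le \alpha e^{-\rho|n-n'|^\mu}$.

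The main obstacle is that this fails for large $|n-n'|$, because $\epsilon_s$ is fixed while $e^{-\rho|n-n'|^\mu}\to 0$. I would handle it in two ways.

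First, I would read the nonresonance condition used at step $s$ as restricted to $0<|n-n'|\le N_s$, in line with Definition \ref{non-res1}. Only these pairs enter Lemma \ref{Solu}, since the homological equation only involves $\Pi_{N_s}P$. For such pairs,
\[
\alpha e^{-\rho N_s^\mu}\ge \epsilon_0^{1/5}\epsilon_s^{0.05}\gg 4\epsilon_s
\]
by the bound \eqref{est_small}, using $\alpha=\epsilon_0^{1/5}$ and $\epsilon_s=\epsilon_0^{(3/2)^s}$.

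Second, for pairs with $|n-n'|>N_s$, if the set is really defined over all $n\ne n'$, I would exploit the logarithmic weight. I would bound the difference by $2\epsilon_s(\ln^{-\sigma}\langle n\rangle+\ln^{-\sigma}\langle n'\rangle)$ and split into cases. Where this is not small, i.e. both $|n|$ and $|n'|$ are small, the Diophantine bound $|\omega\cdot(n-n')|\ge\gamma|n-n'|^{-\tau}$ dominates the total perturbation $4\epsilon_0$. Here I would use $\sigma>\tau/\mu$ to compare $(\ln|n|)^{-\sigma}$ against $e^{-\rho|n-n'|^\mu}$. I expect this bookkeeping to be the delicate part.

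If neither route closes, I would fall back on the truncated definition, which is all the iteration actually uses. With either route in place, the induction gives $\theta\in\mathcal{O}_s$ for all $s$, proving the inclusion.
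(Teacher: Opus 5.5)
Your first route (the truncated reading) is the paper's proof. It bounds $|D^\infty_\theta(n)-D^s_\theta(n)|\le\sum_{m\ge s}|P^m_{n,n}|\le 2\epsilon_s$, uses the triangle inequality against the $2\alpha e^{-\rho|n-n'|^{\mu}}$ margin in \eqref{seti}, and closes only for $0<|n-n'|\le N_s$ via $4\epsilon_s e^{\rho N_s^{\mu}}\le\alpha$ from \eqref{est_small}; the paper likewise reads the definition of $\mathcal{O}_s$ with the truncation of Definition \ref{non-res1}, and, like you, tacitly treats $D^m_\theta$ for $m>s$ and $D^\infty_\theta$ as already defined at $\theta$.

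Drop the second route, because it would not close: for $n$ near the origin and $|n-n'|\to\infty$, the only available bound $|d^s_n-d^\infty_n|\le 2\epsilon_s$ does not decay and eventually exceeds $\alpha e^{-\rho|n-n'|^{\mu}}$, while $\gamma|n-n'|^{-\tau}$ cannot absorb the $O(\epsilon_0)$ shift of $d_n$.
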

\begin{proof}From Definition \eqref{seti}, we see $\widetilde{\mathcal{O}}^{\infty}   \subseteq \mathcal{O}_1$. For any $s \geq 0 $, we have
	\begin{equation*}
		\begin{split}
		|D^{\infty}_n(\theta)-D^{s}_n(\theta)|\leq \sum^{\infty}_{i=s}|(P^i)_{n,n}|
		\leq \sum^{\infty}_{i=s} \epsilon_i\frac{1}{\ln^{\sigma} \langle n\rangle}
		\leq 2\epsilon_s.
		\end{split}
		\end{equation*}
For any $\theta \in \widetilde{\mathcal{O}}^{\infty} $, and $|n-n'| \leq N_{s}$,
\begin{equation*}
	\begin{split}
			|D^{s}_n(\theta)-D^{s}_{n'}(\theta)| &\geq |D^{\infty}_n(\theta)-D^{\infty}_{n'}(\theta)|- |D^{s}_n(\theta)-D^{\infty}_{n}(\theta)+D^{s}_{n'}(\theta)-D^{\infty}_{n'}(\theta)|\\
			& \geq \frac{2\alpha}{e^{\rho|n-n'|^{\mu}}}-4\epsilon_s\\
			& \geq  \frac{\alpha}{e^{\rho|n-n'|^{\mu}}}.
			\end{split}
	\end{equation*}		
	The last inequality holds since $4\epsilon_se^{\rho N_s^{\mu}} \leq 4\epsilon_s\epsilon^{-0.1}_s \leq \alpha$.
	\end{proof}
Since we only introduce angles as parameters to avoid resonance, we also need the following lemma for measure estimates.
\begin{lem}\label{Determinant}
	 Let $u^{(0)}, \cdots, u^{(r-1)}$ be $r$ independent vectors in $\mathbb{R}^r$
	 and $w \in \mathbb{R}^r$ be an arbitrary vector. Then there exists $i \in \{0, \cdots, r-1\}$, such that
	\[ \left|  w\cdot u^{(i)} \right|  \geq
	\frac{\|w\|_{\ell_1} |\det(u^{(0)},\cdots, u^{(r-1)} )| }{r \left\| u^{(0)},\cdots, u^{(r-1)} \right\|^{r-1}}, \]
	where 	
	$$\left\| u^{(0)},\cdots, u^{(r-1)} \right\|:=\max_{0\le i\le r-1}	\left\|u^{(i)} \right\|_{\ell_1}  $$
	and
	$\det(u^{(0)},\cdots, u^{(r-1)})$ is the determinant of the matrix formed by the components of the vectors $u^{(0)},\cdots,u^{(r-1)}$.

\end{lem}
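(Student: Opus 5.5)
We prove Lemma \ref{Determinant} by Cramer's rule. Let $U$ be the $r\times r$ matrix whose rows are $u^{(0)},\dots,u^{(r-1)}$, and set $b:=Uw\in\mathbb{R}^r$, so that $b_i=w\cdot u^{(i)}$. Since the vectors are independent, $U$ is invertible and $w=U^{-1}b$. The idea is to bound $\|w\|_{\ell_1}$ from above in terms of $\max_i|b_i|$; the resulting inequality is exactly the claim after rearrangement.

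Write $U^{-1}=\operatorname{adj}(U)/\det U$. The entry of $\operatorname{adj}(U)$ in position $(k,i)$ is, up to sign, the $(r-1)\times(r-1)$ minor of $U$ obtained by deleting row $i$ and column $k$. Hence
\[
w_k=\frac{1}{\det U}\sum_{i=0}^{r-1}(\pm M_{ik})\,b_i .
\]
Summing over $k$ gives
\[
\|w\|_{\ell_1}\le \frac{\max_i|b_i|}{|\det U|}\sum_{i}\sum_k |M_{ik}|.
\]

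The key step is to bound the minor sum so that it contributes at most $r\|u^{(0)},\dots,u^{(r-1)}\|^{r-1}$. Fix $i$. Expanding the determinant $M_{ik}$ as a sum over permutations and using the triangle inequality yields
\[
\sum_k|M_{ik}|\le \prod_{j\neq i}\|u^{(j)}\|_{\ell_1}\le \|u^{(0)},\dots,u^{(r-1)}\|^{r-1}.
\]
This holds because $\sum_k\sum_{\pi}\prod_{j\ne i}|u^{(j)}_{\pi(j)}|$, taken over injections $\pi$ from the remaining rows into the columns, with the column $k$ omitted by $\pi$ determined as the leftover one, is dominated by the full product expansion $\prod_{j\ne i}\sum_l|u^{(j)}_l|$. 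Each term of the product appears at most once across all $k$, since the injection determines $k$. Summing over the $r$ values of $i$ then gives
\[
\|w\|_{\ell_1}\le \frac{r\,\|u^{(0)},\dots,u^{(r-1)}\|^{r-1}}{|\det U|}\max_i|w\cdot u^{(i)}|.
\]
Choosing $i$ attaining the maximum gives the lemma.

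The only delicate point is the combinatorial accounting in the minor bound: one must check that no monomial is counted twice when summing over $k$. I expect this to be routine via the injection argument above. Alternatively, one can avoid it by using Hadamard-type bounds, which give $|M_{ik}|\le\prod_{j\ne i}\|u^{(j)}\|_{\ell_1}$ for each $k$, at the cost of an extra factor $r$. If that looser route is taken, the intended constant would need checking against the application in the measure estimate, where $r$ is small (e.g. $r=2$) and constants are harmless.
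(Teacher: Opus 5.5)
Your proof is correct and complete. The injection count you flag as delicate is already rigorous as written: pairs $(k,\pi)$, with $\pi$ a bijection from the rows $j\neq i$ onto the columns other than $k$, correspond one-to-one to injections from $\{j\neq i\}$ into the $r$ columns. The sum over injections of the nonnegative products $\prod_{j\neq i}|u^{(j)}_{\pi(j)}|$ is at most the sum over all maps, and that sum factorizes as $\prod_{j\neq i}\|u^{(j)}\|_{\ell_1}$.

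The paper gives no argument of its own; it only cites Lemma A.5 of \cite{SW24}. Your Cramer's-rule/cofactor argument supplies a self-contained proof. It even gives the slightly sharper constant $\sum_i\prod_{j\neq i}\|u^{(j)}\|_{\ell_1}$ in place of $r\,\|u^{(0)},\dots,u^{(r-1)}\|^{r-1}$.

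Drop the Hadamard fallback. It loses a factor $r$, giving $r^2$ instead of $r$ in the denominator, so it would not prove the lemma as stated, and you don't need it.
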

\begin{proof}
The proof can be found in Lemma A.5 in \cite{SW24}.
\end{proof}
%

	\begin{lem}\label{Mes2}
		 Suppose that $g(m)$ is $r$ times differentiable on an interval $J \subset \mathbb{R}$. For any $\gamma > 0$, let $J_\gamma := \{m \in J : |g(m)| < \gamma\}$. If there exists a constant $d>0$ such that $|g^{(r)}(m)| \geq d$ for all $m \in J$, then the Lebesgue measure of $J_\gamma$
		 satisfies $|J_\gamma| \leq M \gamma^{1/r}$, where $M := 2(2 + 3 + \cdots + r + d^{-1})$. Here $|\cdot|$ denotes the Lebesgue measure of a set.
	\end{lem}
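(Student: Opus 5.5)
The plan is to prove this classical sublevel-set estimate (in the spirit of Pyartli / Lemma A.6 type results) by induction on $r$, using the following fact: if $|g^{(r)}|\ge d$ on $J$, then $g^{(r-1)}$ is strictly monotone on $J$. Hence $g^{(r-1)}$ has at most one sign change, and the set where it is small is an interval of controlled length.

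\textbf{Base case $r=1$.} Here $|g'|\ge d$, so $g$ is strictly monotone. Therefore $J_\gamma$ is contained in an interval on which $g$ varies by less than $2\gamma$. By the mean value theorem its length is at most $2\gamma/d\le M\gamma$, and the base case holds with $M=2d^{-1}$.

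\textbf{Inductive step.} Assume the statement for $r-1$, with constant $M_{r-1}=2(2+\cdots+(r-1)+d^{-1})$, and apply it with a threshold $\beta>0$ to be chosen.

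1. Since $g^{(r-1)}$ is monotone with $|(g^{(r-1)})'|\ge d$, the set $E:=\{m\in J:|g^{(r-1)}(m)|<\beta\}$ is an interval of length at most $2\beta/d$.

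2. The complement $J\setminus E$ consists of at most two intervals $J_1,J_2$. On each of them $|g^{(r-1)}|\ge\beta$.

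3. On each $J_i$, apply the induction hypothesis to $g$ with $r-1$ in place of $r$ and $\beta$ in place of $d$. This gives $|J_\gamma\cap J_i|\le M'(\beta)\gamma^{1/(r-1)}$, where $M'(\beta)=2(2+\cdots+(r-1)+\beta^{-1})$.

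4. Adding up, we obtain
\[
|J_\gamma|\le \frac{2\beta}{d}+2\Big(2(2+\cdots+(r-1))+\frac{2}{\beta}\Big)\gamma^{1/(r-1)}.
\]

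5. Choose $\beta=\gamma^{1/r}$. Then $\gamma^{1/(r-1)}/\beta=\gamma^{1/(r-1)-1/r}\cdot\gamma^{0}\ldots$; more precisely $\gamma^{1/(r-1)}\beta^{-1}=\gamma^{1/(r(r-1))}\le\gamma^{1/r}$ when $\gamma\le1$ (note $1/(r(r-1))\ge 1/r$ fails for $r>2$). This is a sign that the constants must be bookkept more carefully. The robust alternative is the standard argument via Lagrange interpolation/Vandermonde, which gives the claimed bound directly.

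\textbf{The Vandermonde route.} I would present the proof this way, since it yields an explicit constant without delicate case analysis.

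1. If $|J_\gamma|>M\gamma^{1/r}$, we can pick $r+1$ points $x_0<\dots<x_r$ in $J_\gamma$ with consecutive gaps at least $|J_\gamma|/(r+1)$. This is possible because $J_\gamma$ is measurable, so its measure can be partitioned into $r$ pieces and we take appropriate points from each.

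2. By the divided-difference identity, there is some $\xi\in J$ with
\[
g^{(r)}(\xi)/r!=g[x_0,\dots,x_r]=\sum_i \frac{g(x_i)}{\prod_{j\ne i}(x_i-x_j)}.
\]

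3. Since $|g(x_i)|<\gamma$ and $\prod_{j\ne i}|x_i-x_j|\ge i!(r-i)!\,h^r$, where $h$ is the minimal gap, we get $d/r!\le \gamma\, 2^r/(r!\,h^r)$. Hence $h\le 2(\gamma/d)^{1/r}$ and $|J_\gamma|\lesssim r\,h$.

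4. This gives $|J_\gamma|\le C_r d^{-1/r}\gamma^{1/r}$. Since $d^{-1/r}\le 1+d^{-1}$, this is bounded by $M\gamma^{1/r}$ after absorbing the combinatorial constant into the sum $2+3+\cdots+r$. This is the form with additive $d^{-1}$ stated in the lemma, for $\gamma\le1$; for $\gamma>1$ one uses $|J_\gamma|\le |J|$ or the trivial monotonicity bounds.

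\textbf{Main obstacle.} The main difficulty is matching the exact constant $M=2(2+\cdots+r+d^{-1})$. I expect to follow the inductive scheme with a level-dependent threshold $\beta_k=\gamma^{(r-k)/r}$ at the $k$-th derivative. This produces one interval of length $2\beta_k/\beta_{k+1}\cdot(\ldots)$ per level, which is where the summands $2,3,\dots,r$ arise, while the top level contributes $2d^{-1}\gamma^{1/r}$.
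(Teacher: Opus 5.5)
Your Vandermonde estimate is correct. From $|g(x_i)|<\gamma$, $\prod_{j\ne i}|x_i-x_j|\ge i!\,(r-i)!\,h^r$, $\sum_i 1/(i!(r-i)!)=2^r/r!$ and the mean value theorem for divided differences (which needs only $r$-fold differentiability), you get $h<2(\gamma/d)^{1/r}$. This gives a scale-invariant bound $|J_\gamma|\le C_r(\gamma/d)^{1/r}$.

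The gap is the final step, where this must become the additive constant $M=2(2+\cdots+r+d^{-1})$. The inequality $d^{-1/r}\le 1+d^{-1}$ leaves a term $C_r d^{-1}\gamma^{1/r}$ with $C_r\ge 2r$; your $C_r$ is $2(r+1)$. For $r\ge 2$ this cannot be ``absorbed into $2+3+\cdots+r$'': as $d\to 0$ the only growing term of $M$ is $2d^{-1}$, and its coefficient is exactly $2$. Worse, with your selection (gaps at least $|J_\gamma|/(r+1)$), take $r=2$, $d=4/9$: then $6d^{-1/2}=9>8.5=2(2+d^{-1})$. So the bound you actually establish is strictly weaker than the lemma, and no rearrangement of constants rescues it. The split into $\gamma\le 1$ and $\gamma>1$ is beside the point, since comparing $C_r d^{-1/r}$ with $M$ does not involve $\gamma$. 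The fallback $|J_\gamma|\le |J|$ does not help either, because $J$ may be long or unbounded.

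Both defects are repairable, and the repaired argument is a genuinely different proof from the standard one. First, select the points through $\phi(x)=|J_\gamma\cap(-\infty,x]|$: take $x_i\in J_\gamma$ with $\phi(x_i)$ within $\eta$ of $i|J_\gamma|/r$ for $i=0,\dots,r$, restricting to $J_\gamma\cap[-R,R]$ if necessary. Since $x_{i+1}-x_i\ge \phi(x_{i+1})-\phi(x_i)\ge |J_\gamma|/r-2\eta$, you get $|J_\gamma|\le 2r(\gamma/d)^{1/r}$. Second, put $t=d^{-1/r}$; Bernoulli's inequality $t^r\ge 1+r(t-1)$ gives
\[
2(2+\cdots+r)+2t^r-2rt=r^2-r+2\bigl(t^r-1-r(t-1)\bigr)\ge 0,
\]
that is, $2r\,d^{-1/r}\le M$. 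This closes your route for every $\gamma>0$ and every $r\ge 1$.

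The paper only cites Lemma A.8 of \cite{SW24}. The standard proof of this exact constant (Xu--You--Qiu) is the level-threshold scheme you sketch under ``Main obstacle'', but your account of where $2,\dots,r$ come from is wrong: it is not one interval per level. For $u\in J_\gamma$, let $k$ be the least index with $|g^{(r-k)}(u)|<\gamma^{k/r}$; if $k\ge 2$, then $|g^{(r-k+1)}(u)|\ge \gamma^{(k-1)/r}$. By Rolle, $g^{(r-k+1)}$ has at most $k-1$ zeros, so $g^{(r-k)}$ has at most $k$ monotone pieces. Each piece contributes measure at most $2\gamma^{k/r}/\gamma^{(k-1)/r}=2\gamma^{1/r}$, and level $k=1$ contributes at most $2\gamma^{1/r}/d$. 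Summing gives exactly $M\gamma^{1/r}$.

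That route produces the stated constant directly. Yours needs the extra convexity step, but it is shorter and yields the sharper scale-invariant bound $2r(\gamma/d)^{1/r}$.
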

	
	\begin{proof} The proof can be found in Lemma A.8 in \cite{SW24}. \end{proof}

Under these settings, we have the following lemma.
\begin{lem}\label{Mes3}
	Let $\tau>1$.
	For any  $\gamma>0$, one has 
$$meas(\mathcal{O} \setminus \mathcal{O}_{-1})=O(\gamma).$$
\end{lem}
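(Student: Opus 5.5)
We need to show that $\mathcal{O}\setminus\mathcal{O}_{-1}$, the set of angles that fail the Diophantine condition of Definition \ref{Diophantine}, has measure $O(\gamma)$. (The statement writes $\mathrm{DC}_{\gamma,\mu}$; we read it as $\mathrm{DC}_{\gamma,\tau}$ with $\tau>1$.) The complement is a union of resonant sets:
$$\mathcal{O}\setminus\mathcal{O}_{-1}=\bigcup_{n\in\mathbb{Z}^2\setminus\{0\}} R_n,\qquad R_n:=\Big\{\theta\in[0,2\pi]:|n_1\cos\theta+n_2\sin\theta|<\frac{\gamma}{|n|^{\tau}}\Big\}.$$
It therefore suffices to show $\mathrm{meas}(R_n)\le C\gamma |n|^{-\tau-1}$ and then sum over $n$.

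To bound a single $R_n$, we write $n_1\cos\theta+n_2\sin\theta=\|n\|_2\cos(\theta-\phi_n)$, where $\phi_n$ is the polar angle of $n$. Set $g(\theta)=\cos(\theta-\phi_n)$ and $\eta=\gamma|n|^{-\tau}\|n\|_2^{-1}$. Then $R_n=\{\theta:|g(\theta)|<\eta\}$.

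There are two natural ways to bound this set.
\begin{itemize}
\item \emph{Direct argument.} On $[0,2\pi]$ the set consists of two arcs centred at the zeros $\theta=\phi_n\pm\pi/2$. Near those zeros $|g(\theta)|\ge \frac{2}{\pi}|\theta-\theta_0|$, so each arc has length at most $\pi\eta$. Hence $\mathrm{meas}(R_n)\le 2\pi\eta$.
\item \emph{Via Lemma \ref{Mes2}.} Split $[0,2\pi]$ into finitely many intervals (say four) on which either $|g'|\ge 1/\sqrt2$ or $|g|\ge 1/\sqrt2$. On the latter intervals $R_n$ is empty once $\eta<1/\sqrt2$. On the former, apply Lemma \ref{Mes2} with $r=1$ and $d=1/\sqrt2$. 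Either way $\mathrm{meas}(R_n)\le C\eta$ with an absolute constant $C$.
\end{itemize}

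Since $\|n\|_2\ge |n|/\sqrt2$, we get $\mathrm{meas}(R_n)\le C'\gamma|n|^{-\tau-1}$. Summing over $n\ne0$, and using that there are at most $4k$ lattice points with $|n|=k$,
$$\mathrm{meas}(\mathcal{O}\setminus\mathcal{O}_{-1})\le C'\gamma\sum_{k\ge1}4k\cdot k^{-\tau-1}=4C'\gamma\sum_{k\ge1}k^{-\tau}.$$
The last series converges precisely because $\tau>1$, so the bound is $O(\gamma)$.

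No step is a real obstacle. The only care needed is that the per-$n$ bound must gain the extra factor $\|n\|_2^{-1}$ from the transversality of $g$. Without it, $\sum_{n\ne0}|n|^{-\tau}$ would diverge in two dimensions for $\tau\le2$. Rewriting the form as $\|n\|_2\cos(\theta-\phi_n)$ exposes exactly this factor, which is why the threshold in the statement is $\tau>1$.
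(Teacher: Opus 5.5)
Your proof is correct and follows the paper's skeleton: write $\mathcal{O}\setminus\mathcal{O}_{-1}$ as a union of resonant sets $\mathcal{R}_\ell$, bound each set using transversality of $\omega(\theta)$ and $\omega'(\theta)$, then sum over $\ell$. The one real difference is the per-$\ell$ bound, and it matters.

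The paper combines Lemma \ref{Determinant}, which gives $|\omega^{(i)}(\theta)\cdot\ell|\ge|\ell|/4$ for some $i\in\{0,1\}$, with Lemma \ref{Mes2}. It then records only $\mathrm{meas}(\mathcal{R}_\ell)\le 10\gamma|\ell|^{-\tau}$ and asserts that $\sum_{\ell\in\mathbb{Z}^2\setminus\{0\}}|\ell|^{-\tau}$ converges because $\tau>1$. There are $4k$ lattice points with $|\ell|=k$, so that series behaves like $\sum_k k^{1-\tau}$. It converges only for $\tau>2$, so the paper's argument as written does not cover $1<\tau\le 2$.

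Your rewriting $n_1\cos\theta+n_2\sin\theta=\|n\|_2\cos(\theta-\phi_n)$ keeps the factor $\|n\|_2^{-1}$, and that factor is exactly what makes $\sum_k 4k\cdot k^{-\tau-1}$ converge under the stated hypothesis $\tau>1$. The paper's own inputs would also give this factor: Lemma \ref{Mes2} with $r=1$ and $d=|\ell|/4$ gives $M=8/|\ell|$. The displayed bound simply drops it.

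Your direct arc-length estimate is also more elementary than the determinant route. Your second bullet makes explicit the partition of $[0,2\pi]$ into subintervals on which one fixed derivative index works. The paper needs that partition to apply Lemma \ref{Mes2} but leaves it implicit. Finally, reading $\mathrm{DC}_{\gamma,\mu}$ as $\mathrm{DC}_{\gamma,\tau}$ in the definition of $\mathcal{O}_{-1}$ is the right interpretation.
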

\begin{proof}
	Given a vector $\ell\in \mathbb{Z}^2$, define the non-Diophantine set
		\begin{equation}
		\mathcal{R}_{\ell}:=\left\lbrace
		\theta\in \mathcal{O}:
	\left| \omega(\theta)\cdot\ell \right|  < \frac{\gamma}{|\ell|^{ \tau }}
		\right\rbrace
	\end{equation}
	and
		\begin{equation}
		\mathcal{O} \setminus \mathcal{O}_{-1}:= \bigcup_{0<|\ell|<\infty} 	\mathcal{R}_{\ell}.
	\end{equation}
	
	Consider the vector $\omega(\theta) = (\cos \theta, \sin \theta)$ and its derivative $\omega'(\theta) = (-\sin \theta, \cos \theta)$. Their determinant satisfies
	\begin{equation}
		\det(\omega(\theta), \omega'(\theta)) = \begin{vmatrix}
			\cos\theta & 	\sin\theta \\
			-\sin\theta & 	\cos\theta
		\end{vmatrix}= 1.
	\end{equation}
	In view of Lemma \ref{Determinant}, for any $\theta \in \mathcal{O}$, there exists an index $i \in \{0, 1\}$ such that
	\begin{equation}\label{derivate}
		| \omega^{(i)}(\theta)\cdot \ell| \geq \frac{|\ell| \det(\omega(\theta), \omega'(\theta))}{2 \max\{|\omega|, |\omega'|\}} \geq \frac{|\ell|}{4},
	\end{equation}
	where $\omega^{(0)}=\omega(\theta)$ and $\omega^{(1)}=\omega'(\theta)$.
	
	Applying Lemma \ref{Mes2} to the function $g(\theta) =   \omega(\theta)\cdot\ell$, where the $i$-th derivative is bounded below by $|\ell|/4$, we deduce
	\begin{equation}\label{mes_R_ell}
		\text{meas}(\mathcal{R}_{\ell}) \le  \frac{10\gamma}{|\ell|^{\tau}}.
	\end{equation}
	
	Summing over $\ell \in \mathbb{Z}^2 \setminus \{0\}$, and noting that $\tau > 1$ ensures the convergence of the series, we obtain
	\begin{align}
		\text{meas}(\mathcal{O} \setminus \mathcal{O}_{-1}) \le \sum_{\ell \neq 0} \text{meas}(\mathcal{R}_{\ell}) \le \sum_{\ell \neq 0} \frac{10\gamma}{|\ell|^{\tau}} \le C(\tau) \gamma,
	\end{align}
	where $C(\tau)$ is a constant depending on $\tau$.
\end{proof}

\begin{lem}
Let $\gamma>\alpha>0$ be a sufficiently small parameter, and let $\tau>1$, $0 < \mu < 1$, and $\rho>0$ be constants. Suppose that  $\sigma$ satisfies the scaling relation
	\begin{equation}\label{para}
	\sigma>\frac{\tau}{\mu}>1.
	\end{equation}
	Then, one has 
	$$meas(\mathcal{O}_{-1} \setminus \widetilde{\mathcal{O}}^{\infty} )=O(\alpha).$$
\end{lem}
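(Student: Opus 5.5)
We write the bad set as a union of resonant sets indexed by the difference $\ell=n-n'$ and the base point $n'$:
\[
\mathcal{O}_{-1}\setminus\widetilde{\mathcal{O}}^{\infty}=\bigcup_{\ell\neq 0}\ \bigcup_{n'\in\mathbb{Z}^2}\mathcal{R}_{\ell,n'},\qquad
\mathcal{R}_{\ell,n'}:=\Big\{\theta\in\mathcal{O}_{-1}: \big|\omega(\theta)\cdot\ell+d^\infty_{n'+\ell}(\theta)-d^\infty_{n'}(\theta)\big|<\frac{2\alpha}{e^{\rho|\ell|^{\mu}}}\Big\}.
\]
The key point is that $\theta\in\mathcal{O}_{-1}=\mathrm{DC}_{\gamma,\tau}$ already gives $|\omega\cdot\ell|\ge \gamma|\ell|^{-\tau}$. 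By \eqref{est_D^inf} (and its Lipschitz analogue from the iteration), the correction satisfies
\[
|d^\infty_{n'+\ell}-d^\infty_{n'}|\le 2\epsilon_0\big(\ln^{-\sigma}\langle n'+\ell\rangle+\ln^{-\sigma}\langle n'\rangle\big).
\]
We split the pairs $(\ell,n')$ into two regimes.

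\textbf{Far regime.} Suppose $\min\{\ln\langle n'\rangle,\ln\langle n'+\ell\rangle\}$ is large compared with $|\ell|^{\tau/\sigma}$; concretely, $4\epsilon_0\ln^{-\sigma}\langle\cdot\rangle<\tfrac{\gamma}{2}|\ell|^{-\tau}$. Then the Diophantine lower bound gives $|D^\infty_{n'+\ell}-D^\infty_{n'}|\ge \tfrac{\gamma}{2}|\ell|^{-\tau}$. We need this to exceed $2\alpha e^{-\rho|\ell|^{\mu}}$. That holds for every $\ell$ since $\alpha<\gamma$ is small and $e^{-\rho|\ell|^\mu}$ beats $|\ell|^{-\tau}$ for large $|\ell|$; for small $|\ell|$ we use $\alpha\ll\gamma$, which is allowed since $\alpha=\epsilon_0^{1/5}$. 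So $\mathcal{R}_{\ell,n'}=\emptyset$ in this regime. This shows that for fixed $\ell$ only finitely many $n'$ can be resonant, namely those with $\ln\langle n'\rangle\lesssim (\epsilon_0/\gamma)^{1/\sigma}|\ell|^{\tau/\sigma}$. This gives $|n'|\le \exp(C|\ell|^{\tau/\sigma})$ with $C=C(\epsilon_0,\gamma)$, so the number of relevant $n'$ is at most $\exp(2C|\ell|^{\tau/\sigma})$.

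\textbf{Near regime: measure of each resonant set.} Here we bound the measure of each nonempty $\mathcal{R}_{\ell,n'}$ via Lemmas \ref{Determinant} and \ref{Mes2}, exactly as in Lemma \ref{Mes3}. The function $g(\theta)=\omega(\theta)\cdot\ell+d^\infty_{n'+\ell}-d^\infty_{n'}$ is only Lipschitz, not $C^2$. So we argue with Lipschitz transversality instead of the second derivative. On the arcs where $|\omega'(\theta)\cdot\ell|\ge |\ell|/4$, the Lipschitz constant of the $d^\infty$ part is at most $4\epsilon_0/\alpha\le |\ell|/8$, so $g$ is strictly monotone with slope $\gtrsim|\ell|$. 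Hence each such arc contributes measure $\lesssim \alpha e^{-\rho|\ell|^\mu}/|\ell|$. On the complementary arcs we have $|\omega\cdot\ell|\ge|\ell|/4$, and there $g$ cannot be small at all. A bounded number of arcs (two sign changes of $\omega'\cdot\ell$ on the circle) gives
\[
\mathrm{meas}(\mathcal{R}_{\ell,n'})\le C\,\alpha\, e^{-\rho|\ell|^{\mu}}.
\]

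\textbf{Summation (main obstacle).} Summing over both indices gives
\[
\mathrm{meas}\le C\alpha\sum_{\ell\neq0} e^{-\rho|\ell|^\mu}\,e^{2C|\ell|^{\tau/\sigma}}.
\]
This converges exactly because $\tau/\sigma<\mu$, i.e.\ by the hypothesis \eqref{para} $\sigma>\tau/\mu$: the counting factor is subexponential of smaller order than the decay $e^{-\rho|\ell|^\mu}$. The delicate bookkeeping is that the constant $C$ in the count depends on $(\epsilon_0/\gamma)^{1/\sigma}$, which is small. We must check that the resulting sum is bounded uniformly as $\alpha,\epsilon_0\to0$, so that the final bound is $O(\alpha)$ with a constant depending only on $\rho,\mu,\tau,\sigma,\gamma$. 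This is the step I expect to require the most care.
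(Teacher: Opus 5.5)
Your proposal is correct and follows essentially the same route as the paper. You split the pairs according to whether $\min_{n_0\in\{n,n'\}}\langle n_0\rangle$ exceeds $\exp(c|\ell|^{\tau/\sigma})$, show the far regime is empty using the Diophantine bound together with the $\ln^{-\sigma}$ decay of $d^\infty$, bound each near-regime resonant set by $O(\alpha e^{-\rho|\ell|^{\mu}})$ via transversality, and sum using $\tau/\sigma<\mu$. The paper does the same with the fixed constant $c=\rho/2$ in place of your $\epsilon_0$-dependent threshold. Its transversality step treats the Lipschitz bound $4\epsilon_0/\alpha$ as a derivative bound when invoking Lemma \ref{Mes2}; your monotonicity argument is the rigorous form of that step.

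The step you flag as delicate is harmless. The summand $e^{-\rho|\ell|^{\mu}}e^{2C|\ell|^{\tau/\sigma}}$ is increasing in $C\sim(\epsilon_0/\gamma)^{1/\sigma}$, so once $C\le 1$ the sum is bounded by $\sum_{\ell\neq0}e^{-\rho|\ell|^{\mu}+2|\ell|^{\tau/\sigma}}$. That is a constant depending only on $\rho,\mu,\tau,\sigma$. Counting both the case where $n'$ is small and the case where $n'+\ell$ is small only changes this by a bounded factor.
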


\begin{proof}	
	For any $n,n'\in \mathbb{Z}^2$ with $0<|n-n'|<\infty$, define the resonant set
	\begin{equation}
		\mathcal{R}_{n,n'}:=\left\lbrace
		\theta\in \mathcal{O}_{-1}:
		\left|  D_{n}^{\infty}(\theta)-D_{n'}^{\infty}(\theta) \right|  < \frac{2\alpha}{\exp(\rho|n-n'|^{ \mu })}
		\right\rbrace
	\end{equation}
	and  
	\begin{equation}
		\mathcal{O}_{-1} \setminus \widetilde{\mathcal{O}}^{\infty}:= \bigcup_{n,n'\in \mathbb{Z}^2} 	\mathcal{R}_{n,n'}.
	\end{equation}
	We prove the measure estimate by induction and a partition of the index space.
	
	\textbf{Case 1: Large index.}
%
%
Considering the condition
\begin{equation}\label{min_n_0}
	\min_{n_0 \in \{n,n'\}} \langle n_0 \rangle \ge \exp\left(\frac{\rho}{2}|n-n'|^{\tau/\sigma}\right),
\end{equation}
then by using the inductive assumption and the smallness of the perturbation, 
we obtain for any $\theta\in \mathcal{O}_{-1} $:
\begin{align}
	\left| D_{n}^{\infty}(\theta)-D_{n'}^{\infty}(\theta) \right|
	&\ge \left| \omega(\theta) \cdot (n-n')\right| - 2\max_{n_0 \in \{n,n'\}} |d^{\infty}_{n_0}(\theta)| \nonumber\\
	&\ge \frac{\gamma}{|n-n'|^{\tau}} - \max_{n_0 \in \{n,n'\}} \frac{4\epsilon_0}{\ln^{\sigma} \langle n_0 \rangle} \nonumber\\
	&\ge \frac{\gamma}{|n-n'|^{\tau}} - \frac{4\epsilon_0}{(\frac{\rho}{2})^\sigma |n-n'|^\tau}\\
	\nonumber &\ge \frac{\gamma}{2|n-n'|^{\tau}}\\
	& \ge \frac{2\alpha}{\exp(\rho|n-n'|^{ \mu })},
\end{align}
where we used the fact that $\gamma$ is sufficiently large relative to $\alpha$. This implies that under condition \eqref{min_n_0}, $\mathcal{R}_{n,n'} = \emptyset$.


\textbf{Case 2: Small Index.}
Therefore, we only need to focus on the case
\begin{equation}\label{min_n_0'}
	\min_{n_0 \in \{n,n'\}} \langle n_0 \rangle < \exp\left(\frac{\rho}{2}|n-n'|^{\tau/\sigma}\right).
\end{equation}

In view of the transversality condition \eqref{derivate}, for any $\theta \in \mathcal{O}_{-1}$, there exists an index $i \in \{0, 1\}$ such that:
\begin{align}
     \nonumber	\left| \left( D_{n}^{\infty}(\theta)-D_{n'}^{\infty}(\theta)\right)^{(i)}   \right|
	&\geq \left| \omega^{(i)}\cdot \left(n-n' \right)\right | - 2	\max_{n_0\in\{n,n'\}} \left| \left( d^{\infty}_{n_0}(\theta)\right)^{(i)} \right| \\
		&\geq\frac{1}{4}-\frac{4\epsilon_{0}}{\alpha}
		\geq \frac{1}{8}.
		\end{align}
Then, following the proof of \eqref{mes_R_ell}, one has
\begin{equation*}
		\text{meas}(\mathcal{R}_{n,n'})\le \frac{20 \alpha}{\exp(\rho|n-n'|^{ \mu })}.
\end{equation*}
Summing over all possible pairs $(n, n')$ satisfying \eqref{min_n_0'}, and letting $k=n-n'$:
\begin{align*}
	\text{meas}(\mathcal{O}_{-1}\setminus \widetilde{\mathcal{O}}^{\infty}) &\le \sum_{k \in \mathbb{Z}^2 \setminus\{0\}} \sum_{\substack{n_0\in\mathbb{Z}^2
			\atop{\langle n_0 \rangle \le e^{\rho|k|^{\tau/\sigma}}}}} \text{meas} (\mathcal{R}_{n_0, n_0+k})\\
	&\le \sum_{k \in \mathbb{Z}^2 \setminus \{0\}} 4 \exp\left(\rho|k|^{\tau/\sigma}\right) \cdot \frac{20\alpha}{\exp\left(\rho|k|^{\mu}\right)} \\
	&\le 80\alpha \sum_{k \in \mathbb{Z}^2 \setminus \{0\}} \exp\left( \rho|k|^{\tau/\sigma} - \rho|k|^{\mu} \right).
\end{align*}
Since  $\mu > \tau/\sigma$, the exponent $|k|^{\tau/\sigma} - |k|^{\mu} \leq -\frac{1}{2}|k|^{\mu}$  as $|k| \to \infty$. Thus the series converges, and we conclude:
\begin{equation*}
	\text{meas}(\mathcal{O}_{-1}\setminus \widetilde{\mathcal{O}}^{\infty}) \le C(\rho,\tau,\sigma,\mu) \alpha.
\end{equation*}

This completes the proof.	
\end{proof}

\subsection{Proof of Theorem \ref{thm2}}
\begin{proof}
	From the Lemma \ref{P_new} and Theorem \ref{conver}, for any $\theta \in \widetilde{\mathcal{O}}_{\infty}$, we can define the unitary operator 
	\begin{equation}
	\mathrm{Q}=\mathcal{W}_{\infty}\cdot e^{W}.
	\end{equation}
	One sees 
	\begin{equation}
	\begin{split}
	\|\mathrm{Q}\|_{\frac{a}{4}}&\leq \|\mathcal{W}_{\infty}\|_{\frac{a}{4}}\|e^{W}\|_{\frac{a}{4}}\\
	&\leq e^{\frac{C_{a,\sigma,\tau}}{\gamma}}(1+\epsilon_0^{0.5})\\
	&\leq C(\gamma,\tau,a,\sigma, \varepsilon).
	\end{split}
	\end{equation}
		Similarly, one can get $\|\mathrm{Q}^{-1}\|_{\frac{a}{4}} \leq C(\gamma,\tau,a,\sigma, \varepsilon)$.
Thus, one knows 
\begin{equation}
\mathrm{Q}\mathcal{H}_{\theta,\varepsilon} \mathrm{Q}^{-1}=D^{\infty},
\end{equation}
where 
$$D^{\infty}=\mathbf{diag}\big\{D^{\infty}_n(\theta): n\in \mathbb{Z}^2\big\}.$$
Letting $\psi_{n}=\mathrm{Q}^{-1}\delta_{n}$, one sees that
\begin{equation}
\mathcal{H}_{\theta,\varepsilon}\psi_{n}=D^{\infty}_n(\theta) \psi_n,
\end{equation}
and \begin{equation}
	\psi_{n}(m)=\sum_{n' \in \mathbb{Z}^2}(\mathrm{Q}^{-1}_{n,n'})\delta_{n'}(m)=\mathrm{Q}^{-1}_{n,m}.
	\end{equation}
	Finally, one gets 
	\begin{equation}
		\begin{split}
		|\psi_{n}(m)| &\leq |\mathrm{Q}^{-1}_{n,m}|\leq \|\mathrm{Q}^{-1}\|_{\frac{a}{4}}e^{-\frac{a}{4}|m-n|}\\
		&\leq C(\gamma,\sigma, \tau,a,\varepsilon)e^{-\frac{a}{4}|m-n|}.
		\end{split}
	\end{equation}
	\end{proof}

\section{Appendix}

\begin{lem}\label{lem26.5.8}
Let $A_1=\Delta+\omega_1 n$ with $\omega_1\neq0$ be defined on the domain 
$$\mathcal{D}_0=\{\psi\in\ell^2(\mathbb{Z}):\psi(n)=0 \ \mbox{for all but finitely many $n$}\}.$$
Then $A_1$ is essentially self-adjoint on $\mathcal{D}_0$.	
\end{lem}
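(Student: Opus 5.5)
The plan is to use the basic criterion for essential self-adjointness of a symmetric operator: $A_1$ is essentially self-adjoint on $\mathcal{D}_0$ if and only if $\ker(A_1^*\mp \mathrm{i})=\{0\}$. Since $A_1$ is symmetric on $\mathcal{D}_0$ (its matrix is real and symmetric, and finitely supported vectors can be paired termwise), it suffices to identify $A_1^*$ and show that the deficiency equations have no nonzero $\ell^2$ solutions.

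\textbf{Step 1: identify $A_1^*$.} For $\psi\in\ell^2(\mathbb{Z})$, pairing against the basis vectors $\delta_n\in\mathcal{D}_0$ shows that $\psi\in\mathcal{D}(A_1^*)$ exactly when the formal expression
\[
(\tilde A\psi)(n)=\psi(n-1)+\psi(n+1)+\omega_1 n\psi(n)
\]
defines an $\ell^2$ sequence. In that case $A_1^*\psi=\tilde A\psi$, so $A_1^*$ is the maximal operator. If the diagonal constant $2$ from \eqref{26.5.8.1} is kept, it only shifts by a bounded multiple of the identity and does not matter.

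\textbf{Step 2: exclude deficiency vectors.} Suppose $\psi\in\ell^2$ satisfies $\tilde A\psi=\pm\mathrm{i}\psi$. Since $\psi\in\ell^2$, the sequence $\psi(n\pm1)$ is in $\ell^2$. The equation then gives
\[
(\omega_1 n\mp\mathrm{i})\psi(n)=-\psi(n-1)-\psi(n+1)\in\ell^2,
\]
so $n\psi(n)\in\ell^2$. Hence $\psi$ lies in the domain $\{\chi:\sum|n\chi_n|^2<\infty\}$ of the maximal multiplication operator.

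On this domain I would check that the quadratic form is real: $\langle\psi,\tilde A\psi\rangle\in\mathbb{R}$. The hopping part is a bounded self-adjoint operator. The multiplication part gives $\sum\omega_1 n|\psi(n)|^2$, which converges absolutely because $n\psi(n)\in\ell^2$ and $\psi\in\ell^2$, and it is real. On the other hand $\langle\psi,\tilde A\psi\rangle=\pm\mathrm{i}\|\psi\|^2$. Therefore $\psi=0$.

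Equivalently, one can observe that $\tilde A$ is the sum of the self-adjoint multiplication operator $M$ on its maximal domain and a bounded symmetric operator. By Kato–Rellich with relative bound $0$, this sum is self-adjoint on $\mathcal{D}(M)$, and $\mathcal{D}_0$ is a core for $M$.

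\textbf{Main obstacle.} The delicate point is Step 2: one must justify that a deficiency vector automatically has $n\psi\in\ell^2$, so that the summation by parts is legitimate. The bounded hopping makes this immediate. The remaining ingredient is that $\mathcal{D}_0$ is a core for $M$, which follows by truncation $\chi\mathbf{1}_{|n|\le R}$, since $n\chi\mathbf{1}_{|n|>R}\to0$ in $\ell^2$. I expect no further difficulty.
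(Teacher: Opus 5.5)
Your proof is correct, but it takes a different route from the paper. The paper treats $A_1$ as a Jacobi matrix and uses Weyl's limit-point theory. It checks a Carleman-type divergence condition at $+\infty$ and at $-\infty$, concludes that $A_1^*\psi=\pm\mathrm{i}\psi$ has at most a one-dimensional space of solutions square-summable at each end, and then deduces deficiency indices $(0,0)$. You instead use that the hopping part $S$ is bounded, so $M\psi=\tilde A\psi-S\psi\in\ell^2$ for every $\psi$ in the maximal domain. Hence $\mathcal{D}(A_1^*)=\mathcal{D}(M)$, and either the reality of $\langle\psi,\tilde A\psi\rangle$ or Kato--Rellich immediately rules out deficiency vectors.

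Your route is shorter and self-contained. It also avoids two soft spots in the paper's write-up. First, the standard Carleman criterion for Jacobi operators concerns the off-diagonal coefficients ($\sum 1/a_n=\infty$, trivially true here since $a_n\equiv 1$), not $\sum|b_n|^{-1/2}$. Second, the closing remark about ``eigenvalues of self-adjoint extensions'' is really an implicit appeal to Weyl's theorem that limit point at both ends forces deficiency indices $(0,0)$.

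Your argument also never uses the tridiagonal structure, the growth of the potential, or the dimension. It shows that any bounded symmetric hopping plus any real diagonal potential is essentially self-adjoint on finitely supported vectors, including on $\mathbb{Z}^2$. That directly gives the core property for $H$ that the paper asserts in Step 2 of the proof of Case I. The limit-point approach is what one needs when the off-diagonal coefficients are unbounded, where your boundedness argument is unavailable.
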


\begin{proof}
	For any $\phi,\psi\in\mathcal{D}_0$, a direct computation shows
$$\langle \phi,A_1\psi\rangle=\langle A_1\phi,\psi\rangle,$$
because $\Delta$ is symmetric and $\omega_1n$ is a real multiplication operator. Hence $A_1$ is symmetric on $\mathcal{D}_0$. 

Since $A_1$ is a Jacobi (tridiagonal) operator with real coefficients, its adjoint $A_1^*$
acts as the same formal difference expression on the maximal domain
$$\mathcal{D}(A_1^*)=\{\psi\in\ell^2(\mathbb{Z}):\ A_1\psi\in\ell^2(\mathbb{Z})\}.$$
Thus we need to show: if $\psi\in\ell^2(\mathbb{Z})$ satisfies
 $$(\Delta\psi)(n)+\omega_1n\psi(n)=\pm\mathbf{i}\psi(n),\quad
 \forall n\in\mathbb{Z}$$
 then $\psi\equiv0$.
	
	Recall $(\Delta\psi)(n)=\psi(n-1)+2\psi(n)+\psi(n+1)$. Substituting, we get
	$$\psi(n-1)+2\psi(n)+\psi(n+1)+\omega_1n\psi(n)=\pm \mathbf{i}\psi(n).$$
	Collecting terms:
	$$\psi(n-1)+2\psi(n)+\psi(n+1)+(\omega_1n\mp\mathbf{i})\psi(n)=0.$$
	Thus we get the standard three-term recurrence:
	\begin{equation}\label{26.5.8.2}
		\psi(n-1)+\psi(n+1)+(\omega_1n+2\mp\mathbf{i})\psi(n)=0.\end{equation}
		
Since $\omega_1\neq0$, without loss of generality, we may assume that $\omega_1>0.$ Consider the half-line $n\geq1$. Equation \eqref{26.5.8.2} is a second-order linear difference equation. For a Jacobi operator with coefficients
	$$a_n\equiv1,\quad b_n=\omega_1n+2\mp\mathbf{i},$$
	the Carleman condition (sufficient for the limit-point case at $+\infty$) is
	$$\sum_{n=1}^{\infty}\frac{1}{\sqrt{|b_n|}}=+\infty.$$
	Here $|b_n|\sim \omega_1 |n|$ for large $|n|$. Since
	$$\sum_{n=1}^{\infty}\frac{1}{\sqrt{\omega_1n}}=\frac{1}{\sqrt{\omega_1}}\sum_{n=1}^{\infty}\frac{1}{\sqrt{n}}=+\infty,$$
	the Carleman condition holds. Hence at most one linearly independent solution of \eqref{26.5.8.2} is square-summable as 
	$n\rightarrow\infty$.
	
	Equation \eqref{26.5.8.2} is a homogeneous linear difference equation of order $2$. Its solution space on $\mathbb{Z}$ is two-dimensional. If there were a nonzero $\ell^2(\mathbb{Z})$-solution, it would have to be square-summable both as $n\rightarrow+\infty$ and as $n\rightarrow-\infty$.
	
	By the Carleman condition for the right half-line ($n\rightarrow+\infty$), the space of $\ell^2$-solutions at $+\infty$ has dimension at most $1$. By symmetry (replace $n$ by $-n$ and note that $\omega_1n$ changes sign), the same Carleman condition applies to the left half-line $n\rightarrow-\infty$
	after the change of variable $m=-n$. Explicitly, for large negative $n$, $|b_n|\sim \omega_1|n|$ again, so $\sum_{n=-\infty}^{-1}\frac{1}{\sqrt{|b_n|}}=+\infty$, ensuring the limit-point case at $-\infty$ as well.
	
	Thus the space of global $\ell^2$-solutions of \eqref{26.5.8.2} can have dimension at most $1$. More precisely, since the operator is in the limit-point case at both $\pm\infty$, any nontrivial solution that is $\ell^2$ at $+\infty$ cannot be $\ell^2$ at $-\infty$ (and vice versa) unless the two one-dimensional subspaces of square-summable solutions happen to coincide. However, a nonzero global $\ell^2(\mathbb{Z})$-solution would imply that $\pm\mathbf{i}$ are eigenvalues of the self-adjoint extensions, which is impossible since the operator is symmetric and $\pm\mathbf{i}\notin \mathbb{R}$. 
	
	Therefore the equations $A_1^*\psi=\pm \mathbf{i}\psi$ have only the trivial solution $\psi\equiv0$ in $\ell^2(\mathbb{Z})$. Hence, the deficiency indices are $(0,0)$, which implies that $A_1$ is essentially self-adjoint on $\mathcal{D}_0$.
	\end{proof}

\section*{Declarations}

\textbf{Funding} Meina Gao is supported by National Natural Science Foundation of
China (Grant No.12571195). Siming Li is supported by National Natural Science Foundation of China (Grant No.125B2011). Yingte Sun was supported by National Natural Science Foundation of China (Grant No.12101542). \\

\textbf{Conflict of interest}  On behalf of all authors, the corresponding author states that there is no conflict of interest.\\

\textbf{Availability of data }  No data was used for the research described in the article.

\bibliographystyle{alpha} 
\bibliography{ref1}

\end{document}